\let\pa\partial
\let\na\nabla
\let\eps\varepsilon
\newcommand{\N}{{\mathbb N}}
\newcommand{\R}{{\mathbb R}}
\newcommand{\diver}{\operatorname{div}}
\newcommand{\T}{{\mathbb T}}
\def\dd{\mathrm{d}}
\def\m{\mathrm{m}}
\def\D{\mathrm{D}}
\newtheorem{theorem}{Theorem}
\newtheorem{lemma}[theorem]{Lemma}
\newtheorem{remark}[theorem]{Remark}
\newtheorem{example}{Example}
\begin{document}

\title[A generalized Scharfetter--Gummel scheme]{
A generalized Scharfetter--Gummel scheme \\ for nonlocal cross-diffusion systems}

\author[A. J\"ungel]{Ansgar J\"ungel}
\address{Institute of Analysis and Scientific Computing, TU Wien, Wiedner Hauptstra\ss e 8--10, 1040 Wien, Austria}
\email{juengel@tuwien.ac.at} 

\author[P. Li]{Panchi Li}
\address{Department of Mathematics, The University of Hong Kong, Hong Kong, China}
\email{lipch@hku.hk} 

\author[Z. Sun]{Zhiwei Sun}
\address{Institute of Analysis and Scientific Computing, TU Wien, Wiedner Hauptstra\ss e 8--10, 1040 Wien, Austria}
\email{zhiwei.sun@tuwien.ac.at}

\date{\today}

\thanks{The first author acknowledges partial support from   
the Austrian Science Fund (FWF), grant 10.55776/F65, and from the Austrian Federal Ministry for Women, Science and Research and implemented by \"OAD, project MultHeFlo. This work has received funding from the European Research Council (ERC) under the European Union's Horizon 2020 research and innovation programme, ERC Advanced Grant NEUROMORPH, no.~101018153. For open-access purposes, the authors have applied a CC BY public copyright license to any author-accepted manuscript version arising from this submission.}

\begin{abstract}
An implicit Euler finite-volume scheme for a nonlocal cross-diffusion system on the multidimensional torus is analyzed. The equations describe the dynamics of population species with repulsive or attractive interactions. The numerical scheme is based on a generalized Scharfetter--Gummel discretization of the nonlocal flux term. For merely integrable kernel functions, the scheme preserves the positivity, total mass, and entropy structure. The existence of a discrete solution and its convergence to a solution to the continuous problem, as the mesh size tends to zero, are shown. A key difficulty is the degeneracy of the generalized Bernoulli function in the Scharfetter--Gummel approximation. This issue is overcome by proving a uniform estimate for the discrete Fisher information, which requires both the Boltzmann and Rao entropy inequalities. Numerical simulations illustrate the features of the scheme in one and two space dimensions.
\end{abstract}

\keywords{Cross-diffusion equations, entropy method, finite-volume method, Scharfetter--Gummel scheme, structure preservation.}  
 
\subjclass[2020]{65M08, 65M12; 35Q92, 92D25.}

\maketitle


\section{Introduction}

Nonlocal cross-diffusion systems arise in the modeling of interacting population species, where the dynamics is driven not only by the local density gradients but also by nonlocal interaction terms. These systems capture a broad range of collective behaviors, including repulsive and attractive self- or cross-interactions. In this paper, we design and analyze a structure-preserving implicit Euler finite-volume discretization of the following nonlocal problem for the population densities $u_i$ of the $i$th species:
\begin{align}
  & \pa_t u_i - \kappa\Delta u_i = \diver(u_i\na p_i(u)), \label{1.eq} \\
  & p_i(u)(x) = \sum_{j=1}^n\int_{\T^d}W_{ij}(x-y)u_j(y)\dd y
  \quad\mbox{in }\T^d,\ t>0, \label{1.p} \\
  & u_i(0) = u_i^0\quad\mbox{in }\T^d, \label{1.ic}
\end{align}
where $\T^d$ is the $d$-dimensional torus, $\kappa>0$ is a diffusion coefficient, $W_{ij}:\T^d\to\R$ are the interaction kernels (extended periodically to $\R^d$), and $u=(u_1,\ldots,u_n)$ is the solution vector. The functions $p_i=p_i(u)$ can be interpreted as potentials depending on the densities in a nonlocal way. 

When the kernels $W_{ij}$ are positive semidefinite (in the sense of \eqref{1.posdef} below), equations \eqref{1.eq}--\eqref{1.p} capture the dynamics of populations exhibiting repulsive interactions. The equations have been derived from interacting particle systems in a mean-field-type limit in \cite{CDJ19}. A global existence analysis can be found in \cite{JPZ22}. When the kernels $W_{ij}$ converge to the Dirac delta distribution times a factor $a_{ij}\in\R$, the nonlocal equations converge to the local equations \eqref{1.eq} with $p_i(u)=\sum_{j=1}^n a_{ij}u_j$ (see \cite[Theorem 5]{JPZ22} and \cite[Theorem 8]{DHPP24}). The local system is a generalization of the population model first suggested by Busenberg and Travis \cite{BuTr83}. In the general case ($W_{ij}$ being not positive semidefinite), the interaction forces may be attractive, including multicell adhesion effects \cite{CMSW25}. The existence of global solutions to the $n$-species aggregation-diffusion system was proved in \cite{CSS24} assuming small total mass or kernels with bounded variation.

The motivation of our work is to develop a stable numerical scheme that remains effective for all values of $\kappa>0$, particularly in the drift-dominated regime where $\kappa$ is small. Additionally, we aim to establish a robust numerical analysis framework tailored for non-differentiable kernels. Standard discretizations often become unstable in low-diffusion scenarios. To overcome this challenge, we adopt a Scharfetter--Gummel-type discretization, which ensures stability even in these scenarios. Our contribution extends existing approaches in the literature to nonlocal cross-diffusion systems and a broader class of Bernoulli-type functions. Notably, we provide
the first numerical analysis for a Scharfetter--Gummel scheme with non-differentiable potential, even in the single-species case.

\subsection{Entropy structure}

We aim to devise a numerical scheme that preserves the structure of equations \eqref{1.eq}--\eqref{1.p}, namely positivity of the densities, total mass, and entropy production. To explain the last point, we introduce the Boltzmann and Rao entropies
\begin{align*}
  \mathcal{H}_B(u) &= \sum_{i=1}^n\int_{\T^d}u_i(\log u_i-1)\dd x, \\
  \mathcal{H}_R(u) &= \frac12\sum_{i,j=1}^n\int_{\T^d}
  W_{ij}(x-y)u_i(x)u_j(y)\dd x\dd y.
\end{align*}
A formal computation shows that the entropy equalities
\begin{align}
  \frac{\dd\mathcal{H}_B}{\dd t} + 4\kappa\sum_{i=1}^n\int_{\T^d}
  |\na\sqrt{u_i}|^2\dd x &= -\sum_{i=1}^n\int_{\T^d}\na u_i
  \cdot\na p_i\dd x, \label{1.HB} \\
  \frac{\dd\mathcal{H}_R}{\dd t} + \sum_{i=1}^n\int_{\T^d}
  u_i|\na p_i|^2\dd x &= -\kappa\sum_{i=1}^n\int_{\T^d}\na u_i
  \cdot\na p_i\dd x \label{1.HR}
\end{align}
hold true if the kernels are symmetric (see Hypothesis (H4) below). The last integral on the left-hand side of \eqref{1.HB} is called the Fisher information. If the kernels are positive semidefinite in the sense
\begin{align}\label{1.posdef}
  \sum_{i,j=1}^n\int_{\T^d}\int_{\T^d}W_{ij}(x-y)v_i(x)v_j(y)\dd x
  \dd y\ge 0
\end{align}
for functions $v_i\in L^2(\T^d)$, the right-hand sides of both \eqref{1.HB} and \eqref{1.HR} are nonpositive, which yields estimates for $\na u_i$. This argument cannot be used in the general case (including attractive interactions). The gradient-flow structure provides an alternative equality,
\begin{align}\label{1.HBR}
  \frac{\dd}{\dd t}(\kappa\mathcal{H}_B + \mathcal{H}_R)
  + \sum_{i=1}^n\int_{\T^d}u_i|\na(\kappa\log u_i+p_i)|^2\dd x = 0,
\end{align}
and hence a priori bounds for the entropies. As in \cite{ScSe22}, a bound for $\na\sqrt{u_i}$ can be derived when the kernel $W_{ij}$ is $C^1$-regular. However, this becomes infeasible for non-differentiable kernels $W_{ij}$. The work \cite{CSS24} overcomes this issue by assuming small total masses. Indeed, by H\"older's and Young's convolution inequalities \cite[(4.11)]{CSS24}, we can estimate
\begin{align*}
  -\sum_{i=1}^n\int_{\T^d}\na u_i\cdot\na p_i\dd x
  &\le \sum_{i,j=1}^n\|\na u_i\|_{L^1(\T^d)}
  \|W_{ij}*\na u_j\|_{L^\infty(\T^d)} \\
  &\le 4\max_{j=1,\ldots,n}\sum_{i=1}^n\|W_{ij}\|_{L^\infty(\T^d)}
  \|u_i\|_{L^1(\T^d)}\|\na\sqrt{u_i}\|_{L^1(\T^d)}^2.
\end{align*}
Thus, if the total mass $\max_j\|u_j\|_{L^1(\T^d)}$ is sufficiently small, we can absorb the right-hand side by the Fisher information in \eqref{1.HB}. The advantage of this argument is that the differentiability of the kernels is not required. Our goal is to ``translate'' these properties to the discrete level in the framework of the Scharfetter--Gummel discretization. 


\subsection{State of the art and main ideas}

Several works address the design and analysis of numerical schemes for nonlocal cross-diffusion systems. The paper \cite{CHS18} explores a positivity-preserving, one-dimensional finite-volume scheme for equations \eqref{1.eq}--\eqref{1.p} with two species and additional local cross-diffusion terms, focusing on segregated steady states. A convergence analysis is subsequently established in \cite{CFS20}.  For systems with an arbitrary number of species, structure-preserving finite-volume schemes were further investigated in \cite{JPZ24}. We also mention the work \cite{HeZu22}, which is concerned with numerical approximations of nonlocal Shigesada--Kawasaki--Teramoto population systems.

The Scharfetter--Gummel discretization was first suggested in \cite{ScGu69} for the semiconductor drift-diffusion equations. Early extensions to multi-dimensional settings were developed in \cite{FaGa91}, and adapted to finite-volume frameworks in \cite{Bes12,ChDr11}. Variants of the Scharfetter--Gummel scheme for problems with nonlocal self-repulsive interactions were numerically compared in \cite{CCFG19}. These schemes have been extended to nonlinear diffusion problems \cite{Bes12,Jue95}. 
More recently, \cite{ScSe22} introduced a Scharfetter--Gummel scheme tailored for general nonlocal aggregation-diffusion equations, focusing on $C^1$-regular interaction kernels. From a structural perspective, the classical Scharfetter--Gummel scheme was identified in \cite{HST24} as a generalized gradient flow associated to a discrete analog of the energy $\kappa\mathcal{H}_B+\mathcal{H}_R$. While most works focus on two-point approximations, a multi-point discrete duality finite-volume method was analyzed in \cite{Que22}. To the best of our knowledge, the only extension of the Scharfetter--Gummel scheme to (local) cross-diffusion systems is presented in \cite{CHM24}, where a convergent mixed square-root approximation Scharfetter--Gummel scheme for a (local) Poisson--Planck--Nernst model was proposed. 

This work presents the first numerical study of nonlocal cross-diffusion systems using a Scharfetter--Gummel scheme. A key novelty is the treatment of non-differentiable interaction kernels, which represents a significant advancement even for single-species nonlocal aggregation-diffusion models. Our approach addresses three main challenges:%
\begin{itemize}
\item The maximum principle is generally not applicable due to the nonlocal interactions, setting them apart from local semiconductor models.
\item The Scharfetter--Gummel scheme does not inherently preserve the Boltzmann entropy equality \eqref{1.HB} at the discrete level.
\item The Scharfetter--Gummel scheme exhibits an exponential dependence on the potential difference $\D_{K,\sigma}p^k$ (see below), which creates difficulties in cases involving non-differentiable kernels.
\end{itemize}
To overcome these challenges, we introduce a novel reformulation of the Scharfetter--Gummel scheme that depends only linearly on $\D_{K,\sigma}p^k$. Moreover, we prove that this reformulation admits a discrete Boltzmann entropy inequality similar to \eqref{1.HB}.

To explain our main ideas, we consider the scalar equation $\pa_t u + \diver\mathcal{F}=0$ with $\mathcal{F}=-\kappa\na u-u\na p$. We need some notation. Let $\mathcal{T}$ be a triangulation of the domain $\T^d$, consisting of control volumes $K$ and let $t_k=k\Delta t$ be a time step with step size $\Delta t>0$. Let $(u_K^k)_{K\in\mathcal{T}}$ and $(p_K^k)_{K\in\mathcal{T}}$ be piecewise constant approximations of $u(t_k)$ and $p(t_k)$ on the cell $K$, respectively, and let $\sigma=K|L$ be an edge or face between two control volumes $K$ and $L$. The difference is denoted by $\D_{K,\sigma}u^k:=u^k_L-u^k_K$. 

The classical Scharfetter--Gummel flux is defined by
\begin{align*}
  \mathcal{F}_{K,\sigma}^{(1)}[u^k,p^k] = \tau_\sigma\big(B_\kappa(\D_{K,\sigma}p^k)u^k_K
  - B_\kappa(-\D_{K,\sigma}p^k)u^k_L\big),
\end{align*}
where $\tau_\sigma>0$ is the so-called transmissibility coefficient (see \eqref{2.tau} below), $B_\kappa(s)=\kappa B(s/\kappa)$, and $B(s)$ is the Bernoulli function $B(s)=s/(\exp s-1)$ for $s\neq 0$ and $B(0)=1$. (We will use generalized Bernoulli functions in this paper.) The flux can be written equivalently as
\begin{align}\label{1.SG1}
  & \mathcal{F}_{K,\sigma}^{(1)}[u^k,p^k] = -\tau_\sigma
  \bigg(\widetilde{B}_\kappa(\D_{K,\sigma}p^k)\D_{K,\sigma}u^k 
  + \frac{u^k_K+u^k_L}{2}\D_{K,\sigma}p^k\bigg), \quad\mbox{where} \\
  & \widetilde{B}_\kappa(\D_{K,\sigma}p^k)
  = \frac12\big(B_\kappa(\D_{K,\sigma}p^k) 
  + B_\kappa(-\D_{K,\sigma}p^k)\big), \nonumber 
\end{align}
which reveals a mean-value drift and separates the diffusion and drift parts \cite[(2.8)]{ScSe22}. A discrete analog of the flux formulation $\mathcal{F}=-u(\kappa\na\log u+\na p)$ was suggested in \cite[(3.7)]{CCFG19}:
\begin{align}\label{1.SG2}
  & \mathcal{F}_{K,\sigma}^{(2)}[u^k,p^k] = -\tau_\sigma
  (a_1u^k_K+a_2u^k_L)\big(\kappa\D_{K,\sigma}\log u^k 
  + \D_{K,\sigma}p^k\big),
\end{align}
where
\begin{align*}
  & a_1 = \frac{B(z)-B(y)}{y-z}, \quad 
  a_2 = \frac{B(-y)-B(-z)}{y-z}, \quad y = \D_{K,\sigma}\log u^k, 
  \quad z = \kappa^{-1}\D_{K,\sigma}p^k. \nonumber 
\end{align*}
This representation is equivalent to $\mathcal{F}_{K,\sigma}^{(1)}[u^k,p^k]$ and implies a discrete analog of the entropy equality \eqref{1.HBR} but its implementation is delicate due to the logarithm.

We wish to find a discrete analog of the Boltzmann entropy equality \eqref{1.HB}. To this end, we split the numerical flux into two parts according to the diffusion part $-\kappa\na u$ and the drift part $-u\na p$, namely $\mathcal{F}_{K,\sigma}=\mathcal{F}_{K,\sigma}^{\rm diff}+\mathcal{F}_{K,\sigma}^{\rm drift}$. The aim is to prove the inequalities
\begin{align}
  -\sum_{\sigma=K|L}\mathcal{F}_{K,\sigma}^{\rm diff}
  \D_{K,\sigma}\log u^k_K
  &\ge \sum_{\sigma=K|L}\tau_\sigma|\D_{K,\sigma}(u^k)^{1/2}|^2, 
  \label{1.ineq1} \\
  -\sum_{\sigma=K|L}\mathcal{F}_{K,\sigma}^{\rm drift}
  \D_{K,\sigma}\log u^k_K
  &\ge \sum_{\sigma=K|L}\tau_\sigma\D_{K,\sigma}u^k\D_{K,\sigma}p^k,
  \label{1.ineq2}
\end{align}
which are associated to \eqref{1.HB}. Using the classical Scharfetter--Gummel scheme \eqref{1.SG1}, inequality \eqref{1.ineq2} does not hold generally, indicating that the (mean-value) drift term becomes too small in this formulation. This issue also arises with formulation \eqref{1.SG2}. Hence, it is necessary to find a new structure with an enlarged drift term. 

Our first idea is to use an upwind formula for the Scharfetter--Gummel flux. This can be motivated as follows. We deduce from the formula $B(-s)+B(s)=s$ the two equivalent formulations
\begin{align}
  \mathcal{F}_{K,\sigma}^{(1)}[u^k,p^k]
  &= -\tau_\sigma\big(B_\kappa(\D_{K,\sigma}p^k)\D_{K,\sigma}u^k
  + u_L^k\D_{K,\sigma}p^k\big), \label{1.F1} \\
  \mathcal{F}_{K,\sigma}^{(1)}[u^k,p^k]
  &= -\tau_\sigma\big(B_\kappa(-\D_{K,\sigma}p^k)\D_{K,\sigma}u^k
  + u^k_K\D_{K,\sigma}p^k\big). \label{1.F2}
\end{align}
Now, if $\D_{K,\sigma}p^k\ge 0$, we select formula \eqref{1.F1}, while we choose formula \eqref{1.F2} if $\D_{K,\sigma}p^k<0$. This leads to
\begin{align}\label{1.F3}
  \mathcal{F}_{K,\sigma}^{(3)}[u^k,p^k]
  = -\tau_\sigma\big(B_\kappa(|\D_{K,\sigma}p^k|)\D_{K,\sigma}u^k
  + \widehat{u}^k_\sigma\D_{K,\sigma}p^k\big),
\end{align}
where $\widehat{u}^k_\sigma=u_L^k$ if $\D_{K,\sigma}p^k\ge 0$ and $\widehat{u}^k_\sigma=u_K^k$ if $\D_{K,\sigma}p^k<0$. This formulation has two advantages. First, it allows us to derive inequality \eqref{1.ineq2} for the drift part. Second, the Bernoulli function in this formulation exhibits a linear dependence on $\D_{K,\sigma}p^k$, due to the bound $1 - |s|/2 \le B(|s|) \le 1$, in contrast to the exponential dependence of $\widetilde{B}_\kappa$ in \eqref{1.SG1}.

Unfortunately, formulation \eqref{1.F3} suffers from the degeneracy in the diffusion part (since $B(s)\to 0$ as $s\to\infty$), and the derivation of inequality \eqref{1.ineq1} fails. To overcome this issue, our second idea is to derive an estimate for the discrete analog of the Fisher information $\int_{\T^d}|\na\sqrt{u}|^2\dd x$. For this, we take advantage of the Rao entropy and consider generalized Bernoulli functions satisfying $B(s)\ge 1-\alpha s$ for some $0\le\alpha<1$. (The classical Bernoulli function $B(s)=s/(\exp s-1)$ satisfies this condition with $\alpha=1/2$.) Then the discrete Fisher information is controlled by the discrete Boltzmann and Rao entropy production terms and the discrete analog of the cross term $\int_{\T^d}\na u\cdot\na p\dd x$; see Lemma \ref{lem.fisher}. This estimate is used to compute the discrete entropy inequality (assuming an implicit Euler time discretization), resulting in 
\begin{equation}\label{discrete entropy intro}
\begin{aligned}
  \frac{1-\alpha}{\Delta t}&\big(H_B(u^k)-H_B(u^{k-1})\big)
  + \frac{\alpha}{\kappa\Delta t}\big(H_R(u^k)-H_R(u^{k-1})\big) \\
  &+ \kappa(1-\alpha)^2\sum_{\sigma=K|L}
  \tau_\sigma|\D_{K,\sigma}(u^k)^{1/2}|^2 \le 0,
\end{aligned}
\end{equation}
where $H_B$ and $H_R$ are the discrete Boltzmann and Rao entropies, respectively, defined in \eqref{2.HBHR} below. This inequality holds for positive semidefinite kernels (Lemma \ref{lem.fisher1}) and for attractive interactions if the initial data are sufficiently small (Lemma \ref{lem.fisher2}). The discrete gradient bound for $(u^k)^{1/2}$ is the key estimate for the convergence analysis. We make the previous arguments rigorous and extend them to the multi-species problem.

\subsection{Results}

Our main results can be sketched as follows (see Section \ref{sec.main} for details):
\begin{itemize}
\item We prove in Theorem \ref{thm.ex} the existence of solutions to an implicit Euler finite-volume scheme using the Scharfetter--Gummel flux \eqref{1.F3}. The solutions are positive componentwise, conserve the discrete mass, and satisfy \emph{degenerate} discrete versions of the entropy inequalities \eqref{1.HB} and \eqref{1.HR}. 
\item We derive an estimate for the discrete Fisher information, uniform in the mesh parameters, both for repulsive and attractive interactions; see Lemmas \ref{lem.fisher1} and \ref{lem.fisher2}. The key idea for this bound is the use of the condition $B(|s|)\ge 1-\alpha |s|$ for the generalized Bernoulli function and derive \emph{nondegenerate} discrete entropy inequalities \eqref{discrete entropy intro}.
\item We show that the discrete solutions converge to a weak solution to \eqref{1.eq}--\eqref{1.ic} as the mesh size converges to zero both for repulsive and attractive interactions; see Theorems \ref{thm.rep} and \ref{thm.att}.
\item We carry out some numerical experiments to validate the main features of the proposed method and to demonstrate the second-order convergence rate in space and first-order convergence rate in time. Using the proposed method, both repulsive and attractive interactions are studied. Strong repulsive self-interactions may exhibit small-scale oscillations in one and two space dimensions; see Section \ref{sec.num}.
\end{itemize}

The paper is organized as follows. The notation and the precise theorems are introduced in Section \ref{sec.notation}. The existence of discrete solutions is proved in Section \ref{sec.ex}, and the uniform estimate for the discrete Fisher information is shown in Section \ref{sec.fisher}. In Section \ref{sec.est}, further estimates are derived and the convergence of the scheme is proved, based on a compactness argument. The numerical experiments are presented in Section \ref{sec.num}.


\section{Notation and main results}\label{sec.notation}

We introduce the notation and definitions needed for the finite-volume scheme and detail our main results.

\subsection{Finite-volume notation}

We denote by the triplet $(\mathcal{T},\mathcal{E},\mathcal{P})$ a Cartesian finite-volume mesh of the (open) torus $\T^d$. The set $\mathcal{T}$ is the collection of control volumes (or cells), consisting of $M_1\times\cdots\times M_d$ identical hyper-rectangles. The length of the $\ell$th direction equals $\Delta x_\ell=1/M_\ell$. The set $\mathcal{E}$ consists of the edges (or hyper-surfaces) of the mesh, each lying in an affine hyperplane of codimension one. It is partitioned into the set $\mathcal{E}_{\rm int}=\mathcal{E}\cap\T^d$ of internal edges and periodic boundary edge pairs $\mathcal{E}_{\rm per}$. The set $\mathcal{E}_{\rm per}$ consists of pairs $(\sigma, \sigma')$, where $\sigma$ and $\sigma'$ are geometrically coincident edges located on opposite boundaries of the domain. We identify each such pair $(\sigma, \sigma')$ as a single element within $\mathcal{E}$. This identification reflects the toroidal topology of the domain. Finally, the set $\mathcal{P}$ contains the centers $x_K\in\T^d$ of all control volumes $K$, i.e.\ for a cell $K$ indexed by $(i_1,\ldots,i_d)$, the center is given by
\begin{align*}
  x_K = \big((i_1-1/2)\Delta x_1,\ldots,(i_d-1/2)\Delta x_d\big), \quad  \mbox{where }i_\ell\in\{1,\ldots,M_\ell\}.
\end{align*}
In view of the periodic boundary conditions, we identify the centers with indices $(i_1,\ldots,i_\ell,$ $\ldots,i_d)$ and $(i_1,\ldots,i_\ell+M_\ell,\ldots,i_d)$. The notation $\sigma = K|L$ is used if two cells $K$ and $L$ are adjacent either through an internal edge or through a periodic boundary.

For given $K\in\mathcal{T}$, $\mathcal{E}_K$ denotes the set of edges of $K$. For $\sigma\in\mathcal{E}$, we introduce the distance
\begin{align*}
  \dd_\sigma = \begin{cases}
  \dd(x_K,x_L) &\quad\mbox{if }\sigma=K|L\in\mathcal{E}_{\rm int}, \\
  \dd(x_K,\sigma)+\dd(\sigma',x_L) &\quad\mbox{if }(\sigma,\sigma')
  \in\mathcal{E}_{\rm per},\ \sigma\subset\pa K, \sigma'\subset\pa L,
  \end{cases}
\end{align*}
where d is the Euclidean distance in $\R^d$. Because of the uniform mesh, we have $\dd_\sigma=\Delta x_\ell$ for some $\ell\in\{1,\ldots,d\}$. We define the transmissibility coefficient
\begin{align}\label{2.tau}
  \tau_\sigma = \frac{\m(\sigma)}{\dd_\sigma}.
\end{align}

Let $T>0$ and $N\in\N$. We define the time step size $\Delta t=T/N$ and the time steps $t_k=k\Delta t$ for $k=0,\ldots,N$. The size of the space-time discretization $\mathcal{D}$, consisting of the mesh $\mathcal{T}$ and the values $(\Delta t,N)$, is defined by
\begin{align}\label{2.delta}
  \delta = \max\{h,\Delta t\}, \quad
  h = \max\{\Delta x_1,\ldots,\Delta x_d\}.
\end{align}

For the convergence result, we introduce the dual mesh $\mathcal{T}^*$ of $\mathcal{T}$. We associate to $K\in\mathcal{T}$ and $\sigma\in\mathcal{E}_K$ a dual cell $\Delta_\sigma\in\mathcal{T}^*$:
\begin{itemize}
\item ``Diamond'': If $\sigma=K|L\in\mathcal{E}_{\rm int}$, then $\Delta_\sigma$ is the interior of the convex hull of $\sigma\cup\{x_K,x_L\}$.
\item ``Triangles'': If $\{\sigma,\sigma'\}\in\mathcal{E}_{\rm int}$ with $\sigma\subset\pa K$, then $\Delta_\sigma$ is the interior of the convex hull of $\sigma\cup\{x_K\}$, along with the convex hull of $\sigma'\cup\{x_L\}$. 
\end{itemize}
The volume of the dual cell $\Delta_\sigma$ is computed by
\begin{align}\label{2.Deltasig}
  d\m(\Delta_\sigma) = \m(\sigma)\dd_\sigma \quad\mbox{for }
  \sigma\in\mathcal{E}. 
\end{align}
Notice that $\m(\sigma)\dd_\sigma=\m(K)$ for $\sigma\in\mathcal{E}_{K}$. We introduce for $\sigma=K|L$ the difference operators
\begin{align*}
  \D_{K,\sigma}v =  v_L-v_K,
  \quad \D_\sigma v= |\D_{K,\sigma}v|
\end{align*}
and the discrete gradient
\begin{align}\label{2.nahv}
  \na_\sigma^h v = d\frac{\D_{K,\sigma}v}{\dd_\sigma}\nu_{K,\sigma},
\end{align}
where $\nu_{K,\sigma}=(x_L-x_K)/\dd_\sigma$ is the unit vector that is normal to $\sigma$ and points outwards of $K$. For any $\sigma=K|L$ such that $x_K=x_L+\Delta x_\ell e_\ell$, where $\ell\in\{\pm 1,\ldots,\pm d\}$ and $e_\ell$ is the Euclidean unit vector of $\R^d$, we set, slightly abusing the notation,
\begin{align}\label{2.DKell}
  \D_{K,\ell}v := \D_{K,\sigma}v,
\end{align} 
where $e_{-\ell}:=-e_\ell$ for $\ell=1,\ldots,d$. 

Finally, we introduce the following reconstruction operators. Let $u=(u_K^k)_{K\in\mathcal{T},\,k=1,\ldots,N}$ and $v=(v_\sigma^k)_{\sigma\in\mathcal{E},\,k=1,\ldots,N}$ be given. Then we define
\begin{equation}\label{2.pi}
\begin{aligned}
  \pi_\delta u(t,x) = u_K^k 
  &\quad\mbox{for }(t,x)\in(t_{k-1},t_k]\times K, \\
  \pi_\delta^* v(t,x) = v_\sigma^k 
  &\quad\mbox{for }(t,x)\in(t_{k-1},t_k]\times\Delta_\sigma.
\end{aligned}
\end{equation}


\subsection{Discrete functional spaces}

Let $u=(u_K)_{K\in\mathcal{T}}$ be given and let $1\le p<\infty$. The discrete $L^p(\T^d)$ norm is defined by
\begin{align*}
  \|u\|_{0,p,\mathcal{T}} = \bigg(\sum_{K\in\mathcal{T}}\m(K)
  |u_K|^p\bigg)^{1/p}.
\end{align*}
For a function $v=(v_\sigma)_{\sigma\in\mathcal{E}}$, defined on the edges, the associated discrete $L^p(\T^d)$ norm is given by
\begin{align*}
  \|v\|_{0,p,\mathcal{T}^*} = \bigg(\sum_{\sigma\in\mathcal{E}}
  \m(\Delta_\sigma)|v_\sigma|^p\bigg)^{1/p}.
\end{align*}
Then the discrete $W^{1,p}(\T^d)$ norm reads as
\begin{align*}
  \|u\|_{1,p,\mathcal{T}} = \|u\|_{0,p,\mathcal{T}}
  + \|\na^h u\|_{0,p,\mathcal{T}^*},
\end{align*}
where $\na^h u=(\na^h_\sigma u)_{\sigma\in\mathcal{E}}$. Let $p>1$ and $1/p+1/q=1$. The dual norm to the $L^p(\T^d)$ norm with respect to the $L^2(\T^d)$ inner product is given by
\begin{align*}
  \|u\|_{-1,q,\mathcal{T}^*} = \sup_{\|\phi\|_{1,p,\mathcal{T}}=1}
  \bigg|\sum_{K\in\mathcal{T}}\m(K)u_K\phi_K\bigg|.
\end{align*}
For a space-time discrete function $(u^k_K)$, the discrete time derivative is denoted by
\begin{align}\label{2.ut}
  \pa_t^{\Delta t}u^k = \frac{u^k-u^{k-1}}{\Delta t}
  \quad\mbox{for }k=1,\ldots,N.
\end{align}
With this notation, the following identities hold for a function $u=(u_K)_{K\in\mathcal{T}}$:
\begin{align*}
  \|u\|_{0,p,\mathcal{T}} = \|\pi_\delta u\|_{L^p(\T^d)}, \quad
  \|\na^h u\|_{0,q,\mathcal{T}^*}
  = \|\pi_\delta^*(\na^h u)\|_{L^q(\T^d)}. 
\end{align*}


\subsection{Numerical scheme}

We introduce the two-point approximation finite-volume scheme for the cross-diffusion system \eqref{1.eq}--\eqref{1.ic}. The initial datum is approximated by
\begin{align*}
  u_{i,K}^0 = \frac{1}{\m(K)}\int_K u_i^0(x)\dd x
  \quad\mbox{for all }K\in\mathcal{T},\ i=1,\ldots,n.
\end{align*}
Let $u_K^{k-1}=(u_{1,K}^{k-1},\ldots,u_{n,K}^{k-1})$ be given for $K\in\mathcal{T}$. The implicit Euler finite-volume scheme for the values $u_{i,K}^k$, approximating $u_i(t_k)$ on the cell $K$, is defined by
\begin{align}\label{2.euler}
  \m(K)\frac{u_{i,K}^k-u_{i,K}^{k-1}}{\Delta t}
  + \sum_{\sigma\in\mathcal{E}_K}\mathcal{F}_{K,\sigma}[u_i^k,p_i^k]
  = 0.
\end{align}
In the following, we define the numerical flux $\mathcal{F}_{K,\sigma}[u^k_i,p^k_i]$ and the discrete potentials $p_i^k$.

Let $B\in C^0([0,\infty))$ be a weight function satisfying $0<B(s)\le 1$ for $s\ge 0$ and assume that there exists $0\le\alpha<1$ such that
\begin{align*}
  B(s) \ge 1-\alpha s\quad\mbox{for }0\le s\le 1/\alpha.
\end{align*}
These conditions imply that $B(0)=1$. Examples are the standard upwind scheme with $B(s)=1$ (with $\alpha=0$), the Scharfetter--Gummel scheme with the Bernoulli function $B(s)=s/(e^s-1)$ (with $\alpha=1/2$), and the geometric mean scheme $B(s)=e^{-s/2}$ (with $\alpha=1/2$). Further examples can be constructed by means of the Stolarsky mean; see \cite[(1.7)]{HKS21}. Furthermore, we set $B_\kappa(s)=\kappa B(s/\kappa)$ for $s\ge 0$. As motivated in the introduction, the numerical flux $\mathcal{F}_{K,\sigma}$ is given by the generalized Scharfetter--Gummel flux
\begin{align}\label{2.flux}
  \mathcal{F}_{K,\sigma}[u_i^k,p_i^k] 
  = -\tau_\sigma\big(B_\kappa(\D_{\sigma} p_i^k)\D_{K,\sigma}u_i^k
  + \widehat{u}_{i,\sigma}^k\D_{K,\sigma}p_i^k\big),
\end{align}
where $\widehat{u}_{i,\sigma}^k$ is an edge concentration with upwind structure:
\begin{align}\label{2.uhat}
  \widehat{u}_{i,\sigma}^k = \widehat{u}_{i,\sigma}^k(p_i^k)
  &= \left.\begin{cases}
  u_{i,L}^k &\mbox{if }\D_{K,\sigma}p_i^k \ge 0 \\
  u_{i,K}^k &\mbox{if }\D_{K,\sigma}p_i^k < 0
  \end{cases}\right\} = \frac{[\D_{K,\sigma}p_i^k]^+}{\D_{K,\sigma}p_i^k}u_{i,L}^k
  - \frac{[\D_{K,\sigma}p_i^k]^-}{\D_{K,\sigma}p_i^k}u_{i,K}^k
\end{align}
for $\sigma=K|L$ and $[s]^+=\max\{0,s\}$, $[s]^-=\max\{0,-s\}$. We recall the discrete integration-by-parts formula for piecewise constant functions $(v_K)$ \cite[(14)]{CCGJ18}:
\begin{align}\label{2.dibp}
  \sum_{K\in\mathcal{T}}\sum_{\sigma\in\mathcal{E}_K}
  \mathcal{F}_{K,\sigma}[u_i^k,p_i^k] v_K
  =
  -\sum_{\sigma=K|L\in\mathcal{E}}
  \mathcal{F}_{K,\sigma}[u_i^k,p_i^k] \D_{K,\sigma}v.
\end{align}

We introduce the discrete kernels $W_{KJ}^{ij}$ by 
\begin{align}\label{2.Wij}
  W_{KJ}^{ij} = \frac{1}{\m(K)\m(J)}
  \int_K\int_JW_{ij}(x-y)\dd y\dd x
  \quad\mbox{for }K,J\in\mathcal{T},\ i,j=1,\ldots,n.
\end{align}
There are various options to discretize the kernels. Our choice corresponds to \cite[Remark 2.3]{HST24}. Other choices can be found in, for instance, \cite[(7)]{CFS20}, \cite[Sec.~2.2]{HST24}, and \cite[(13)]{JPZ24}. 

To define the discrete potentials, we distinguish between repulsive and attractive self-interactions. For repulsive self-interactions, we define $p_{i,K}^k$ by a fully implicit in time scheme:
\begin{align}\label{3.repp}
  p_{i,K}^k = \sum_{j=1}^n\sum_{J\in\mathcal{T}}\m(J)
  W_{KJ}^{ij}u_{j,J}^k,
\end{align}
while for attractive self-interactions, we use a mid-point time averaging:
\begin{align}\label{3.attp}
  p_{i,K}^k = \sum_{j=1}^n\sum_{J\in\mathcal{T}}\m(J)
  W_{KJ}^{ij}\frac{u_{j,J}^k+u_{j,J}^{k-1}}{2}.
\end{align}


\subsection{Assumptions and precise statements of the main results}\label{sec.main}

We impose the following hypotheses:
\begin{itemize}
\item[(H1)] Domain and initial data: Let $T>0$, $\Omega_T=(0,T)\times\T^d$ and let $u_i^0\in L^2(\T^d)$ satisfy $u_i^0\ge 0$ in $\T^d$, $\|u_i^0\|_{L^1(\T^d)}\neq 0$ for $i=1,\ldots,n$. 
\item[(H2)] Weight function: $B\in C^0([0,\infty))$ satisfies $0<B(s)\le 1$ for $s\ge 0$.
\item[(H3)] Nonuniform coercivity: There exists $0\le\alpha<1$ such that
\begin{align*}
  B(s) \ge 1-\alpha s\quad\mbox{for }0\le s\le 1/\alpha.
\end{align*}
\item[(H4)] Symmetry: $W_{ij}\in L^1(\T^d)$ satisfies $W_{ij}(x)=W_{ji}(-x)$ for all $i,j=1,\ldots,n$ and a.e.\ $x\in\T^d$.
\item[(H5)] Positive semidefiniteness: $W_{ij}\in L^1(\T^d)$ satisfies for all $v_1,\ldots,v_n\in L^2(\Omega)$,
\begin{align*}
  \sum_{i,j=1}^n\int_{\T^d}\int_{\T^d}W_{ij}(x-y)v_i(x)v_j(y)\dd x\dd y
  \ge 0.
\end{align*}
\item[(H6)] Boundedness: $W_{ij}\in L^\infty(\T^d)$ for $i,j=1,\ldots,n$.
\end{itemize}

The condition $\|u_i^0\|_{L^1(\T^d)}\neq 0$ in Hypothesis (H1) is needed to ensure the positivity of $u_{i,K}^k>0$ for all $K\in\mathcal{T}$, $i=1,\ldots,n$, and $k\ge 1$. A significant technical difficulty comes from the fact that the weight function $B$ may be not uniformly positive since $B(s)\to 0$ as $s\to\infty$ is possible. We assume in Hypothesis (H3) a control of the positivity at least for small values $s>0$. Interestingly, we do not need any monotonicity condition on $B$. The symmetry of the kernels in Hypothesis (H4) is needed, for instance, to derive the discrete Rao entropy inequality. We request the positive semidefiniteness in Hypothesis (H5) for the self-repulsive case, while the boundedness of $W_{ij}$ in Hypothesis (H6) is required for the self-attractive case.

Under Hypothesis (H4), the discrete kernel $W_{KJ}^{ij}$ is symmetric in the sense $W_{KJ}^{ij}=W_{JK}^{ji}$, while under Hypothesis (H5), we have for any discrete functions $(v_{i,K})_{K\in\mathcal{T}}$ and $i=1,\ldots,n$,
\begin{align}\label{2.Wpos}
  \sum_{i,j=1}^n\sum_{K,J\in\mathcal{T}}\m(K)\m(J)W_{KJ}^{ij}
  v_{i,K}v_{j,J} 
  = \sum_{i,j=1}^n\int_{\T^d}\int_{\T^d}W_{ij}(x-y)\pi_\delta v_i(x)
  \pi_\delta v_j(y)\dd x\dd y\ge 0,
\end{align}
where we recall definition \eqref{2.pi} of the interpolation operator $\pi_\delta$. We introduce the discrete Boltzmann and Rao entropies
\begin{equation}\label{2.HBHR}
\begin{aligned}
  H_B(u^k) &= \sum_{i=1}^n\sum_{K\in\mathcal{T}}\m(K)
  u_{i,K}^k(\log u_{i,K}^k-1), \\
  H_R(u^k) &= \frac12\sum_{i,j=1}^n\sum_{K,J\in\mathcal{T}}
  \m(K)\m(J)W_{KJ}^{ij} u_{i,K}^k u_{j,J}^k.
\end{aligned}
\end{equation}

Our first main result is the existence of a discrete solution to scheme \eqref{2.euler}--\eqref{2.uhat} and its entropy producing properties.

\begin{theorem}[Existence of discrete solution]\label{thm.ex}
Let Hypotheses (H1)--(H2) hold. Then there exists a solution $(u_K^k)$ to scheme \eqref{2.euler}--\eqref{2.uhat} satisfying positivity and mass conservation:
\begin{align*}
  u_{i,K}^k>0\quad \mbox{for all }K\in\mathcal{T}, \quad 
  \sum_{K\in\mathcal{T}}\m(K)u_{i,K}^k 
  = \sum_{K\in\mathcal{T}}\m(K)u_{i,K}^0
\end{align*}
for all $i=1,\ldots,n$. If additionally Hypotheses (H3)--(H5) hold, the discrete Boltzmann and Rao entropy inequalities hold:
\begin{align}
  \frac{1}{\Delta t}\big(H_B(u^k) - H_B(u^{k-1})\big) + P_B(u^k,p^k)
  &\le -X(u^k,p^k), \label{2.HB} \\
  \frac{1}{\Delta t}\big(H_R(u^k) - H_R(^{k-1})\big)
  + (1-\alpha)P_R(u^k,p^k) &\le -\kappa X(u^k,p^k), \label{2.HR}
\end{align}
where the entropy production terms $P_B(u^k,p^k)$ and $P_R(u^k,p^k)$ and the cross term $X(u^k,p^k)$ are given by
\begin{align*}
  P_B(u^k,p^k) &= 4\kappa\sum_{i=1}^n\sum_{\sigma\in\mathcal{E}}
  \tau_\sigma B_\kappa(\D_\sigma p_i^k)|\D_\sigma(u_i^k)^{1/2}|^2, \\
  P_R(u^k,p^k) &= \sum_{i,j=1}^n\sum_{\sigma\in\mathcal{E}}
  \tau_\sigma\widehat{u}_{i,\sigma}^k|\D_\sigma p_i^k|^2, \\
  X(u^k,p^k) &= \sum_{i=1}^n\sum_{\sigma=K|L\in\mathcal{E}}
  \tau_\sigma (\D_{K,\sigma}p_i^k)(\D_{K,\sigma}u_i^k).
\end{align*}
\end{theorem}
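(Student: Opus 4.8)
The plan is to split the argument into three parts: existence of a positive discrete solution via a fixed-point/topological-degree argument combined with an a priori entropy bound, mass conservation by summing the scheme over all cells, and the two entropy inequalities obtained by testing the scheme with the appropriate discrete "chemical potentials". For the existence part I would first truncate and regularize: fix $k$, assume $u^{k-1}_{i,K}>0$ is given, and consider the map that sends a candidate $u^k$ to the solution of the linearized scheme, or equivalently apply a degree argument to the nonlinear map $F(u^k)=0$ defined by \eqref{2.euler}. The standard trick (as in the cross-diffusion FV literature) is to use the \emph{entropy variable} or at least a logarithmic change of unknowns to force positivity: work with $w_{i,K}=\log u_{i,K}^k$ so that $u_{i,K}^k=e^{w_{i,K}}>0$ automatically, establish an a priori bound on $w$ from the entropy estimate on a ball of radius $R$, then invoke Brouwer's fixed-point theorem (or Leray--Schauder degree) on that ball and remove the regularization. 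Positivity $u_{i,K}^k>0$ then comes for free from the exponential parametrization; the hypothesis $\|u_i^0\|_{L^1}\neq 0$ guarantees the total mass is strictly positive so that not all $u_{i,K}^k$ can vanish in the limit. Mass conservation follows immediately: sum \eqref{2.euler} over $K\in\mathcal{T}$ and use that each interior/periodic edge $\sigma=K|L$ contributes $\mathcal{F}_{K,\sigma}+\mathcal{F}_{L,\sigma}=0$ (conservativity of the two-point flux), leaving $\sum_K\m(K)(u_{i,K}^k-u_{i,K}^{k-1})=0$.

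For the Boltzmann entropy inequality \eqref{2.HB}, I would test \eqref{2.euler} with $v_K=\log u_{i,K}^k$, sum over $K$ and $i$, and use the discrete integration-by-parts formula \eqref{2.dibp}. The time-derivative term produces $\frac{1}{\Delta t}\big(H_B(u^k)-H_B(u^{k-1})\big)$ after invoking the convexity inequality $(a-b)\log a\ge a(\log a-1)-b(\log b-1)$ applied termwise with $a=u_{i,K}^k$, $b=u_{i,K}^{k-1}$. The flux term becomes $-\sum_{\sigma=K|L}\mathcal{F}_{K,\sigma}[u_i^k,p_i^k]\,\D_{K,\sigma}\log u_i^k$; splitting $\mathcal{F}$ as in \eqref{2.flux} into the diffusion part $-\tau_\sigma B_\kappa(\D_\sigma p_i^k)\D_{K,\sigma}u_i^k$ and the drift part $-\tau_\sigma\widehat{u}_{i,\sigma}^k\D_{K,\sigma}p_i^k$, the diffusion part is handled by the elementary inequality $(\D_{K,\sigma}u)(\D_{K,\sigma}\log u)\ge 4|\D_\sigma u^{1/2}|^2$ (i.e. $(a-b)(\log a-\log b)\ge 4(\sqrt a-\sqrt b)^2$), which yields exactly $P_B(u^k,p^k)$. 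For the drift part, this is precisely the upwind inequality \eqref{1.ineq2} advertised in the introduction: with the upwind choice \eqref{2.uhat}, $-\widehat{u}_{i,\sigma}^k(\D_{K,\sigma}p_i^k)(\D_{K,\sigma}\log u_i^k)\ge -(\D_{K,\sigma}u_i^k)(\D_{K,\sigma}p_i^k)$, because $\widehat u$ equals whichever of $u_K,u_L$ makes $(\D\log u)(\D p)$ smallest, and $(b-a)\le a(\log b-\log a)$, $(b-a)\le b(\log b-\log a)$. Collecting these two bounds and recognizing the right-hand side as $-X(u^k,p^k)$ gives \eqref{2.HB}.

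For the Rao entropy inequality \eqref{2.HR}, I would test \eqref{2.euler} with $v_K=p_{i,K}^k$ and sum over $K,i$; here Hypothesis (H4) (symmetry of $W_{KJ}^{ij}$) and the fully implicit choice \eqref{3.repp} are essential so that $\sum_{i,K}\m(K)p_{i,K}^k(u_{i,K}^k-u_{i,K}^{k-1})\ge H_R(u^k)-H_R(u^{k-1})$ — this is the discrete analog of $\int\na u\cdot\na p$ integrating to $\frac{\dd}{\dd t}\mathcal H_R$ and relies on the bilinear/quadratic structure of $H_R$ together with $W_{KJ}^{ij}=W_{JK}^{ji}$. The flux term gives $-\sum_\sigma\mathcal{F}_{K,\sigma}\D_{K,\sigma}p_i^k$; the drift part produces $\sum_\sigma\tau_\sigma\widehat u_{i,\sigma}^k|\D_\sigma p_i^k|^2=P_R$, and the diffusion part produces the cross term $\kappa X(u^k,p^k)$ up to a factor involving $B_\kappa$. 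The appearance of the $(1-\alpha)$ prefactor on $P_R$ is where Hypothesis (H3) enters: one uses $B_\kappa(\D_\sigma p_i^k)\D_{K,\sigma}u_i^k\cdot\D_{K,\sigma}p_i^k$ and the bound $B(s)\ge 1-\alpha s$ to trade a fraction $\alpha$ of the drift term against the diffusion cross term, or more precisely to absorb the sign-indefinite piece; I would carry out this estimate edge-by-edge distinguishing the sign of $\D_{K,\sigma}p_i^k$.

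The main obstacle I anticipate is not the entropy algebra but making the existence argument rigorous with the \emph{degenerate} weight $B$: since $B_\kappa(\D_\sigma p_i^k)$ can be arbitrarily small, the diffusion coefficient in the linearized scheme is not bounded below, so the usual coercivity of the discrete Laplacian is lost and one cannot directly conclude invertibility of the Jacobian or compactness of the fixed-point map. The remedy — which I expect the authors to use — is that positivity and the $\kappa\Delta u_i$ term (the genuine Laplacian, whose flux contributes to $\mathcal{F}_{K,\sigma}$ \emph{independently} of $B$ through... ) is not quite right either; rather, one regularizes by replacing $B$ with $B^\eta:=\max\{B,\eta\}$ for $\eta>0$, runs the fixed-point argument at fixed $\eta$ using the uniform-in-$\eta$ entropy bounds (the constants in \eqref{2.HB}--\eqref{2.HR} do not depend on $\eta$ since $B^\eta$ still satisfies $0<B^\eta\le 1$ and, for $\eta$ small, (H3)), and then passes $\eta\to 0$ using the a priori bounds and the finite dimensionality of the discrete system. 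The positivity lower bound needed to close the logarithm — that $u_{i,K}^k$ stays bounded away from $0$ on the finite mesh — follows from the entropy bound $H_B(u^k)\le C$ together with mass conservation, since on a finite mesh $H_B$ bounded and mass bounded force each $u_{i,K}^k$ into a compact subset of $(0,\infty)$.
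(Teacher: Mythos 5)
Your treatment of the two entropy inequalities is essentially the paper's: test with $\log u_{i,K}^k$ (convexity for the time term, $(a-b)(\log a-\log b)\ge 4(\sqrt a-\sqrt b)^2$ for the diffusion part, the upwind inequality $\widehat u\,\D\log u$ versus $\D u$ for the drift part), then test with $p_{i,K}^k$, split $\kappa B_{i,\sigma}^k=\kappa+\kappa(B_{i,\sigma}^k-1)$ and use $1-B_{i,\sigma}^k\le\frac{\alpha}{\kappa}\D_\sigma p_i^k$ to absorb the sign-indefinite piece into $(1-\alpha)P_R$. One small omission there: the inequality $\sum_{i,K}\m(K)p_{i,K}^k(u_{i,K}^k-u_{i,K}^{k-1})\ge H_R(u^k)-H_R(u^{k-1})$ for the fully implicit potential does not follow from symmetry and bilinearity alone; the quadratic remainder $\tfrac12\sum W_{KJ}^{ij}(u^k-u^{k-1})_i(u^k-u^{k-1})_j$ must be nonnegative, which is exactly where Hypothesis (H5) enters.

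The genuine gap is in your existence and positivity argument. First, the degeneracy of $B$ that motivates your regularization $B^\eta=\max\{B,\eta\}$ is not an issue at fixed mesh: the paper freezes the potential $p^*=p(u^*)$ and solves the resulting \emph{linear} system, whose matrix is strictly diagonally dominant by columns with column excess exactly $\m(K)/\Delta t>0$ \emph{independently of the size of} $B_\kappa(\D_\sigma p^*)$ (the $B$-terms cancel in the column sums), hence an invertible M-matrix; Schaefer's fixed-point theorem with the mass-conservation bound then gives a discrete solution, no regularization or entropy-variable parametrization needed. Second, and more seriously, your claimed lower bound on the densities is false: the map $s\mapsto s(\log s-1)$ extends continuously to $s=0$ with value $0$, so a bound $H_B(u^k)\le C$ together with bounded mass does \emph{not} confine the $u_{i,K}^k$ to a compact subset of $(0,\infty)$ — the Boltzmann entropy provides no barrier at zero, and likewise it gives no a priori bound on $w=\log u$ from below, so the ball on which you want to apply Brouwer/Leray--Schauder is not available from the entropy estimate. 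Strict positivity has to come from the scheme itself: the paper argues by inverse-positivity of the M-matrix plus a propagation argument — if $u_{K^*}^k=0$ for some cell, then, since $B_\kappa>0$ and the off-diagonal coefficients are nonpositive, all neighboring values vanish, hence $u^k\equiv 0$ and then $u^{k-1}\equiv 0$, contradicting $\|u_i^0\|_{L^1}\neq 0$ and mass conservation. Without such an argument (or an equivalent one applied to the limit of your $\eta$-regularized solutions), your proof of $u_{i,K}^k>0$ does not close.
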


Inequalities \eqref{2.HB} and \eqref{2.HR} are the discrete analogs of the entropy equalities \eqref{1.HB} and \eqref{1.HR}, taking into account the Bernoulli function $B_\kappa$. In fact, the formulas coincide if $\alpha=0$, which corresponds to the upwind choice $B(s)=1$ for $s\ge 0$. If $\alpha>0$, the Rao entropy production term is reduced, which is the price to pay for our scheme.

\begin{remark}[Zero-diffusion limit]\rm 
In the zero-diffusion limit $\kappa\to 0$, the scheme converges formally to the upwind finite-volume approximation for the aggregation equation. Indeed, since $B_\kappa(s)\to [s]^-$ as $\kappa\to 0$ and $\D_\sigma p_i^k\ge 0$ by construction, we obtain
\begin{align*}
  \mathcal{F}_{K,\sigma}[u^k,p^k] \to
  -\tau_\sigma\widehat{u}_{i,\sigma}^k\D_{K,\sigma}p_i^k
  \quad\mbox{as }\kappa\to 0.
\end{align*}
Using the classical Scharfetter--Gummel flux \eqref{1.SG1}, this limit was proved in \cite[Theorem B]{HST24} in the sense of the energy-dissipative principle for gradient flows.
\qed\end{remark}

The second main result concerns the convergence of the scheme. For this, we use either Hypothesis (H5) or (H6). Let $(\mathcal{D}_m)_{m\in\N}$ be a sequence of space-time discretizations of $[0,T]\times\T^d$ indexed by the size $\delta_m=\max\{h_m,\Delta t_m\}$ of the mesh, satisfying $\delta_m\to 0$ as $m\to\infty$. To simplifiy the notation, we define $(u_{m,i},p_{m,i}):=(\pi_{\delta_m}u_i,\pi_{\delta_m}p_i)$ and  $(\na^m u_{m,i}, \na^m p_{m,i}):=(\pi_{\delta_m}^* \na^{h_m} u_i,\pi_{\delta_m}^* \na^{h_m} p_i)$.

\begin{theorem}[Convergence of the scheme; positive semidefinite kernel matrix]\label{thm.rep}
Let Hypo\-theses (H1)--(H5) hold and $r=(d+2)/(d+1)$, $1\le s<(d+2)/d$. Let $(u_m)_{m\in\N}$ be a sequence of finite-volume solutions to scheme \eqref{2.euler}--\eqref{2.uhat} associated to the mesh $\mathcal{D}_m$ and let $p_m=p(u_m)$ be the associated discrete potential. Then there exists a function $u^*\in L^r(0,T;W^{1,r}(\T^d))$ satisfying $u^*_i\ge 0$ in $(0,T)\times\T^d$ and for $i=1,\ldots,n$,
\begin{equation*}
\begin{aligned}
  u_{m,i}\to u_i^*, \quad p_{m,i}\to p_i &\quad\mbox{strongly in }
  L^s(\Omega_T), \\
  \na^m u_{m,i}\rightharpoonup \na u_i^*, \quad  
  \na^m p_{m,i}\rightharpoonup \na p_i^* &\quad\mbox{weakly in }
  L^r(\Omega_T),
\end{aligned}
\end{equation*}
and $(u^*,p^*)$ solve \eqref{1.eq}--\eqref{1.ic} in the weak sense.
\end{theorem}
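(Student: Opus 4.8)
The plan is to establish the convergence of the finite-volume scheme \eqref{2.euler}--\eqref{2.uhat} via a compactness argument in the spirit of the discrete Aubin--Lions lemma. First I would collect the a priori estimates that are uniform in the mesh parameter $\delta_m$. The existence Theorem~\ref{thm.ex} gives positivity, mass conservation, and the degenerate entropy inequalities \eqref{2.HB}--\eqref{2.HR}; summing the nondegenerate version \eqref{discrete entropy intro} (valid here by Lemma~\ref{lem.fisher1} under (H5)) over $k=1,\ldots,N$ yields a uniform bound for the discrete Fisher information $\sum_k\Delta t\sum_\sigma\tau_\sigma|\D_\sigma(u_{i}^k)^{1/2}|^2$, hence a uniform $L^2(0,T;H^1(\T^d))$-type bound on $(u_{m,i})^{1/2}$. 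Combining this with the mass bound via the discrete Gagliardo--Nirenberg--Sobolev inequality (as in the finite-volume literature, e.g.\ \cite{CFS20,JPZ24}) upgrades to a uniform bound on $\na^m u_{m,i}$ in $L^r(\Omega_T)$ with $r=(d+2)/(d+1)$ and on $u_{m,i}$ in $L^{r}(0,T;W^{1,r})\cap L^{(d+2)/d}$. Since $p_{i,K}^k$ is a discrete convolution of $u^k$ with a bounded kernel (under (H5), which with symmetry gives the needed $L^\infty$ control as in Section~1.2, or more directly one can also invoke (H6)), the discrete potentials and their gradients inherit analogous bounds; in particular $\na^m p_{m,i}$ is bounded in $L^r(\Omega_T)$ by a discrete Young convolution inequality applied to $\na^h u$.

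Next I would establish the time-translation estimate needed for compactness. Using the scheme \eqref{2.euler} tested against suitable functions and the discrete integration-by-parts formula \eqref{2.dibp}, together with the bound $0<B_\kappa\le\kappa$ and $0<\widehat u^k_{i,\sigma}$ controlled by the $u^k_{i,K}$, one bounds the discrete time derivative $\pa_t^{\Delta t}u^k_i$ in a discrete $L^1(0,T;W^{-1,q}_{\mathcal T^*})$-type norm uniformly in $m$. Invoking a discrete Aubin--Lions--type compactness result (of the kind in \cite{CCGJ18} or \cite{JPZ24}, adapted to Cartesian meshes with periodic boundary), the uniform space bound in $W^{1,r}$ together with the time estimate yields a subsequence with $u_{m,i}\to u_i^*$ strongly in $L^s(\Omega_T)$ for $1\le s<(d+2)/d$, and weak convergence $\na^m u_{m,i}\rightharpoonup \na u_i^*$ in $L^r(\Omega_T)$; here one must check that the discrete gradient reconstruction $\pi^*_\delta\na^h u$ converges weakly to the distributional gradient of the strong limit, a standard argument using the consistency of the difference operators on the dual mesh. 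Strong convergence of $p_{m,i}$ then follows from the convolution structure: $p_{m,i}$ is $W_{ij}*u_{m,j}$ up to a mesh-consistency error that vanishes as $\delta_m\to 0$ (using $W_{ij}\in L^1$ and dominated convergence), so $p_{m,i}\to W_{ij}*u_j^*=p_i^*$ strongly, and $\na^m p_{m,i}\rightharpoonup \na p_i^*$ weakly in $L^r$.

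Finally I would pass to the limit in the weak formulation. Fixing a smooth periodic test function $\phi$, multiplying \eqref{2.euler} by $\phi(t_{k-1},x_K)$, summing over $K$ and $k$, and using \eqref{2.dibp}, the scheme becomes a discrete weak identity. The time-derivative term converges to $-\int_0^T\int_{\T^d}u_i^*\pa_t\phi\,\dd x\,\dd t - \int_{\T^d}u_i^0\phi(0)\,\dd x$ by strong $L^s$ convergence and the initial-data approximation. The diffusion term, rewritten using $\na^h u^k_i$ on the dual mesh, converges to $\kappa\int\int\na u_i^*\cdot\na\phi$ because $\na^m u_{m,i}\rightharpoonup\na u_i^*$ weakly and the discrete test-function gradients converge strongly to $\na\phi$. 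For the drift term $-\tau_\sigma\widehat u^k_{i,\sigma}\D_{K,\sigma}p^k_i$, I would split $\widehat u^k_{i,\sigma}$ into the edge average $\tfrac12(u^k_{i,K}+u^k_{i,L})$ plus an upwind correction $\pm\tfrac12\D_\sigma u^k_i$; the averaged part gives $\int\int u_i^*\na p_i^*\cdot\na\phi$ via the product of a strong and a weak limit (using that the edge reconstruction of $u_{m,i}$ still converges strongly to $u_i^*$), while the correction term is shown to vanish by Cauchy--Schwarz, since it is bounded by $(\sum_\sigma\tau_\sigma|\D_\sigma u^k_i|^2)^{1/2}$ against $(\sum_\sigma\tau_\sigma\dd_\sigma^2|\D_\sigma p^k_i|^2)^{1/2}$, and the second factor carries an extra $\dd_\sigma$ that forces it to zero—or more simply, the $B_\kappa$-weighted Fisher information bound together with $B_\kappa(s)\ge\kappa(1-\alpha s/\kappa)$ controls the relevant pieces. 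The main obstacle I anticipate is precisely this handling of the upwind edge concentration $\widehat u^k_{i,\sigma}$: one must carefully show that replacing it by the natural centered average introduces only a consistency error that vanishes in the limit, which requires combining the Fisher information bound on $\D_\sigma(u^k_i)^{1/2}$ with the $L^r$ bound on $\na^h p^k_i$ and the mesh-size factors, and doing so uniformly over the whole space-time domain rather than pointwise.
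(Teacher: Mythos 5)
Your overall architecture (uniform Fisher-information bound via Lemma \ref{lem.fisher1}, Gagliardo--Nirenberg to get $u$ in $L^{(d+2)/d}$ and $\na^h u$ in $L^{r_2}$, potential bounds, dual-norm bound on $\pa_t^{\Delta t}u$, discrete Aubin--Lions, then limit passage in the weak form) matches the paper. But there are genuine gaps in the limit passage. First, you treat the diffusion part of the flux as if it were $\kappa\,\tau_\sigma\D_{K,\sigma}u_i^k$, whereas the scheme's flux \eqref{2.flux} carries the degenerate weight $B_\kappa(\D_\sigma p_i^k)$; passing to the limit in this term requires showing that $\pi_m^*B(\kappa^{-1}\D_\sigma p_i^k)\to 1$ strongly (in every $L^s$, $s<\infty$), which the paper obtains from Hypothesis (H3), $0\le 1-B_{i,\sigma}^k\le\alpha\kappa^{-1}\D_\sigma p_i^k$, and the bound $h_m\|\na^m p_i\|_{L^1}\to 0$ (Lemma \ref{lem.convB}); your sketch omits this step entirely, and without it the weak limit of $B_{i,\sigma}^k\na^m u_i$ is not identified. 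Second, your handling of the upwind edge value $\widehat u^k_{i,\sigma}$ via the split into the centered average plus $\pm\tfrac12\D_\sigma u_i^k$ and a Cauchy--Schwarz pairing does not close with the available estimates: $\big(\sum_\sigma\tau_\sigma|\D_\sigma u_i^k|^2\big)^{1/2}$ is an $L^2$-norm of $\na^h u_i$, which is \emph{not} uniformly bounded (only $\na^h(u_i^k)^{1/2}\in L^2$ and $\na^h u_i\in L^{r_2}$ with $r_2<2$ are controlled), and $\na^h p_i$ is likewise only in $L^{r_2}$, so the extra factor $\dd_\sigma$ does not obviously rescue the product. The estimate that actually works is much simpler: $|\widehat u^k_{i,\sigma}-u^k_{i,K}|\le|\D_{K,\sigma}u_i^k|$, hence $\|\pi_m^*\widehat u_i-\pi_m u_i\|_{L^1(\Omega_T)}\le Ch_m\|\na^m u_i\|_{0,1,\mathcal{T}_m^*}\to0$, giving strong convergence $\pi_m^*\widehat u_i\to u_i^*$ in $L^r$, $r<r_1$; combined with the uniform $L^{r_2}$ bound on $\widehat u_i\na^h p_i$ (which comes from the Rao production term $\sum_k\Delta t\sum_\sigma\tau_\sigma\widehat u_{i,\sigma}^k|\D_\sigma p_i^k|^2\le C$ together with $\widehat u_i^{1/2}\in L^{2r_1}$, i.e.\ Lemma \ref{lem.r2}), one identifies the weak $L^{r_2}$ limit of the product as $u_i^*\na p_i^*$ by a strong-times-weak argument, which is the paper's route.

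A smaller but real inaccuracy: Hypothesis (H5) (positive semidefiniteness) does not give any $L^\infty$ control of the kernels, and (H6) is not assumed in Theorem \ref{thm.rep}, so you cannot invoke a ``bounded kernel'' for the potential estimates. What is needed (and suffices) is $W_{ij}\in L^1(\T^d)$ from (H4) together with a generalized Young convolution inequality for the non-convolution kernel $(x,y)\mapsto\pi_\delta W^{ij}_{K,y}(x)$, which is exactly how Lemma \ref{lem.p} bounds $p_i^k$ in $L^{r_1}$ and $\na^h p_i^k$ in $L^{r_2}$; the role of (H5) in this theorem is different, namely to guarantee $X(u^k,p^k)\ge0$ so that the nondegenerate entropy inequality \eqref{discrete entropy intro} and the Fisher bound hold.
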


\begin{theorem}[Convergence of the scheme; bounded kernel functions]
\label{thm.att}\sloppy\ \ 
Let \ Hypotheses (H1)--(H4) and (H6) hold and let the initial data be sufficiently small in the sense
\begin{align*}
  \max_{j=1,\ldots,n}\sum_{i=1}^n\|W_{ij}\|_{L^\infty(\T^d)}
  \|u_j^0\|_{L^1(\T^d)} 
  \le \frac14\frac{\kappa(1-\alpha)^2}{\alpha(1-\alpha)+1}.
\end{align*}
Furthermore, we assume that either there exists $C>0$ such that
\begin{align}\label{assume: derivative of initial data}
  \sum_{i=1}^n\sum_{\sigma\in\mathcal{E}}
  \tau_\sigma|\D_{\sigma}(u_i^0)^{1/2}|^2 \le C
\end{align}
or the parabolic scaling $\Delta t_m\le Ch_m^2$ holds for some $C>0$ independent of $m$. Let $(u_m)_{m\in\N}$ be a sequence of finite-volume solutions to scheme \eqref{2.euler}--\eqref{2.uhat} associated to the mesh $\mathcal{D}_m$ and let $p_m$ be the associated discrete potential. Then there exists a function $u^*\in L^r(0,T;W^{1,r}(\T^d))$ satisfying the statements of Theorem \ref{thm.rep}. 
\end{theorem}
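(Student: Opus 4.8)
The plan is to follow the same compactness scheme as in the proof of Theorem \ref{thm.rep}, replacing the positive-definiteness argument (Lemma \ref{lem.fisher1}) by the small-data argument (Lemma \ref{lem.fisher2}) to obtain a uniform discrete Fisher information bound. First I would combine the discrete Boltzmann inequality \eqref{2.HB} and Rao inequality \eqref{2.HR} as in \eqref{discrete entropy intro}: multiplying \eqref{2.HB} by $(1-\alpha)$, \eqref{2.HR} by $\alpha/\kappa$, and adding, the cross terms $X(u^k,p^k)$ cancel and the degenerate production term $P_B$ is estimated from below using Hypothesis (H3) in the form $B_\kappa(\D_\sigma p_i^k)\ge\kappa(1-\alpha\D_\sigma p_i^k/\kappa)^+$; the negative contribution $-\alpha\sum_\sigma\tau_\sigma\D_\sigma p_i^k|\D_\sigma(u_i^k)^{1/2}|^2$ is absorbed by combining with the Rao production $P_R$ after estimating $\D_\sigma p_i^k$ by the discrete convolution of $\D_\sigma u_j^k$ and using the discrete analog of the Hölder/Young computation displayed in the introduction, together with the smallness of $\max_j\sum_i\|W_{ij}\|_{L^\infty}\|u_j^0\|_{L^1}$ (which equals the conserved discrete mass). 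This yields, after summation over $k$,
\begin{align*}
  \kappa(1-\alpha)^2\sum_{k=1}^N\Delta t
  \sum_{i=1}^n\sum_{\sigma\in\mathcal{E}}\tau_\sigma|\D_\sigma(u_i^k)^{1/2}|^2
  \le (1-\alpha)H_B(u^0) + \frac{\alpha}{\kappa}H_R(u^0) + C,
\end{align*}
provided the right-hand side is finite. This is precisely where the two alternative hypotheses enter: if \eqref{assume: derivative of initial data} holds, the initial discrete Fisher information is bounded uniformly and $H_B(u^0),H_R(u^0)$ are controlled by $\|u_i^0\|_{L^2}$; if instead $\Delta t_m\le Ch_m^2$, one treats the first time step separately, estimating $H_B(u^1)-H_B(u^0)$ crudely and using the parabolic scaling to control the resulting $h_m^{-2}$ factor by $\Delta t_m^{-1}$, so that the bound survives with a modified constant. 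In both cases we obtain the uniform discrete $L^2(0,T;H^1)$-type bound on $(u_{m,i})^{1/2}$.

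From this point the argument is identical to the positive-semidefinite case. Using the mass conservation and the Fisher information bound, a discrete Gagliardo--Nirenberg--Sobolev inequality gives uniform bounds on $u_{m,i}$ in $L^r(0,T;W^{1,r}(\T^d))$ with $r=(d+2)/(d+1)$ and on $\na^m u_{m,i}$ in $L^r(\Omega_T)$; Hypothesis (H6) then yields corresponding bounds on $p_{m,i}$ and $\na^m p_{m,i}$ via the discrete convolution structure \eqref{2.Wij}. A uniform estimate on the discrete time derivative $\pa_t^{\Delta t}u_{m,i}$ in a dual norm $\|\cdot\|_{-1,q,\mathcal{T}^*}$ follows from the scheme \eqref{2.euler}, the flux bound $1-|s|/2\le B(|s|)\le 1$, and the gradient bounds. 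A discrete Aubin--Lions type compactness lemma (as already invoked for Theorem \ref{thm.rep}) then gives, up to a subsequence, strong convergence $u_{m,i}\to u_i^*$ in $L^s(\Omega_T)$ for $1\le s<(d+2)/d$, weak convergence of the discrete gradients, and $u_i^*\ge 0$ as a limit of positive iterates. Convergence of $p_{m,i}\to p_i^*$ and its gradient follows from the convolution with the bounded kernels. Finally, passing to the limit in the weak formulation of \eqref{2.euler}, one identifies the limit of $B_\kappa(\D_\sigma p_{m,i}^k)\D_{K,\sigma}u_{m,i}^k$ with $\kappa\na u_i^*$ (using $B_\kappa(s)\to\kappa$ as the mesh parameter, hence $\D_\sigma p^k$, tends to zero, together with the strong convergence of $p_{m,i}$) and the limit of $\widehat{u}_{i,\sigma}^k\D_{K,\sigma}p_{m,i}^k$ with $u_i^*\na p_i^*$, which shows that $(u^*,p^*)$ solves \eqref{1.eq}--\eqref{1.ic} weakly.

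The main obstacle is controlling the \emph{first-step} initial layer when only the parabolic scaling is assumed: without \eqref{assume: derivative of initial data}, the discrete Fisher information of $u^0$ need not be bounded uniformly in the mesh, so one cannot simply start the telescoping sum at $k=0$. The remedy is to absorb the single term $H_B(u^0)-H_B(u^1)$ (which is bounded by $C\|u_i^0\|_{L^2}^2/\Delta t$ after using that $u^1$ minimizes a strictly convex functional and elementary entropy estimates) using $\Delta t_m\le Ch_m^2$ and the discrete Sobolev embedding in reverse, i.e.\ $\sum_\sigma\tau_\sigma|\D_\sigma(u_i^0)^{1/2}|^2\le Ch_m^{-2}\|u_i^0\|_{L^1}\lesssim\Delta t_m^{-1}$; care is needed to track constants so that the coefficient $\kappa(1-\alpha)^2$ in front of the Fisher information is not destroyed. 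A secondary technical point, shared with Theorem \ref{thm.rep}, is the identification of the diffusive flux limit in the presence of the mesh-dependent weight $B_\kappa(\D_\sigma p^k)$; here the linear bound $1-|s|/2\le B(|s|)\le 1$ together with the (strong $L^s$, hence a.e.\ along a subsequence) convergence of $p_{m,i}$ and the fact that $\D_\sigma p_{m,i}^k\to 0$ pointwise a.e.\ make $B_\kappa(\D_\sigma p_{m,i}^k)\to\kappa$ boundedly, which suffices to pass to the limit against the weakly convergent discrete gradients.
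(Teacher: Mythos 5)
There is a genuine gap at the heart of your entropy argument. When you multiply \eqref{2.HB} by $1-\alpha$ and \eqref{2.HR} by $\alpha/\kappa$ and add, the cross terms do \emph{not} cancel: the right-hand sides contribute $-(1-\alpha)X(u^k,p^k)-\alpha X(u^k,p^k)=-X(u^k,p^k)$ (both weights are positive, so no cancellation is possible), and after the Fisher-information estimate of Lemma \ref{lem.fisher} the surviving coefficient is $-(\alpha(1-\alpha)+1)X(u^k,p^k)$, cf.\ \eqref{3.aux22}. In the repulsive case this term is simply nonpositive; in the attractive case it is precisely the term that forces the small-mass hypothesis — which is why the threshold in the statement involves $\alpha(1-\alpha)+1$ — and it is bounded in the paper by the discrete H\"older/Young convolution estimate of Lemma \ref{lem.estC}, namely $X(u^k,p^k)\le c^*\sum_{i,\sigma}\tau_\sigma(3|\D_\sigma(u_i^k)^{1/2}|^2+|\D_\sigma(u_i^{k-1})^{1/2}|^2)$ with $c^*=\max_j\sum_i\|W_{ij}\|_{L^\infty}\|u_i^0\|_{L^1}$. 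Since you believe $X$ has cancelled, this term is unaccounted for in your proposal. Moreover, you direct the smallness at the wrong object: the degenerate part of $P_B$, i.e.\ $R(u^k,p^k)=\sum_{i,\sigma}\tau_\sigma|\D_\sigma p_i^k||\D_\sigma(u_i^k)^{1/2}|^2$, is handled in the paper without any smallness, by the upwind/downwind splitting in Lemma \ref{lem.fisher} (trading $R$ for $P_R$ and $X$); your proposed route — bounding $\D_\sigma p_i^k$ by a discrete convolution of $\D_\sigma u_j^k$ and applying Young — produces, after Cauchy--Schwarz, a contribution of order $h\,c^*\,F^{3/2}$ with $F$ the Fisher information, which is superlinear in $F$ and cannot be absorbed.

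A second consequence of the missing cross term is that your explanation of why hypothesis \eqref{assume: derivative of initial data} or the parabolic scaling is needed is off target. The initial Fisher information enters because the mid-point potential \eqref{3.attp} makes the cross-term bound at step $k$ involve $|\D_\sigma(u_i^{k-1})^{1/2}|^2$, so at $k=1$ a term $\Delta t\sum_{i,\sigma}\tau_\sigma|\D_\sigma(u_i^0)^{1/2}|^2$ survives the telescoping; there is no ``first-step entropy layer'' to control, $H_B(u^0)$ and $H_R(u^0)$ are bounded directly from $u^0\in L^2$ and (H6), and your claim that $H_B(u^1)-H_B(u^0)\le C\|u_i^0\|_{L^2}^2/\Delta t$ via a minimization property of $u^1$ is unsupported (the scheme is not a minimization). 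You do state the correct crude bound $\sum_\sigma\tau_\sigma|\D_\sigma(u_i^0)^{1/2}|^2\le Ch_m^{-2}\|u_i^0\|_{L^1}$ and its combination with $\Delta t_m\le Ch_m^2$, which is exactly how the paper uses the parabolic scaling, but attached to the wrong term. You also gloss over why \eqref{2.HR} and the lower bound on the Rao entropy survive without (H5): this is where the mid-point discretization of $p$ gives the identity for the Rao difference under (H4) alone, and $|H_R(u^N)|$ is bounded via (H6) and mass conservation. The compactness and limit-passage part of your proposal (Gagliardo--Nirenberg, dual-norm time-derivative bound, discrete Aubin--Lions, convergence of $\widehat{u}_{i,\sigma}^k$ and of $B_\kappa(\D_\sigma p_i^k)\to\kappa$) matches the paper and is fine, but the core a priori estimate must be repaired along the lines of Lemmas \ref{lem.fisher}, \ref{lem.estC}, and \ref{lem.fisher2}.
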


In the limit $\kappa\to 0$, the initial data is required to become smaller and smaller. Thus, the numerical convergence result does not hold for fully attractive interactions. This is not surprising, since, for instance, the aggregation equation for $n=1$ and $p=-u$ equals the backward porous-medium equation $\pa_t u=-\diver(u\na u)$, which is not globally solvable.


\section{Existence of discrete solutions}\label{sec.ex}

We prove the existence of a discrete solution to scheme \eqref{2.euler}--\eqref{2.uhat} by applying Schaefer's fixed-point theorem and derive the discrete Boltzmann and Rao entropy inequalities.

\subsection{Discrete Fokker--Planck equation}

To prove some properties of the fixed-point operator, we need to solve a linear discrete problem for functions $u^k=(u_K^k)_{K\in\mathcal{T}}$ solving the {\em scalar} problem
\begin{equation}\label{3.dfp}
\begin{aligned}
  & \m(K)\frac{u_K^k-u_K^{k-1}}{\Delta t} + \sum_{\sigma\in\mathcal{E}_K}
  \mathcal{F}_{K,\sigma}[u^k,p] = 0 \quad\mbox{for }K\in\mathcal{T}, \\
  & \mathcal{F}_{K,\sigma}[u^k,p] 
  = -\tau_\sigma\big(B_\kappa(\D_{\sigma} p)\D_{K,\sigma}u^k 
  + \widehat{u}_{i,\sigma}^k\D_{K,\sigma}p\big),
\end{aligned}
\end{equation}
where $u_K^{k-1}\ge 0$ for $K\in\mathcal{T}$ is such that $u_{L}^{k-1}>0$ for some $L\in\mathcal{T}$,  $p=(p_K)_{K\in\mathcal{T}}$ is a given potential, and $\widehat{u}_{i,\sigma}^k$ is the upwind term defined in \eqref{2.uhat}.

\begin{lemma}\label{lem.lin}
The discrete Fokker--Planck equation \eqref{3.dfp} has a unique solution $u^k$ which is strictly positive and preserves the mass in the sense
\begin{align*}
  \sum_{K\in\mathcal{T}}\m(K)u_K^k = \sum_{K\in\mathcal{T}}\m(K)
  u_K^{k-1}.
\end{align*}
\end{lemma}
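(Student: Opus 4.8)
The plan is to reformulate the discrete Fokker--Planck equation \eqref{3.dfp} as a linear system $A u^k = b$ for the unknown vector $u^k = (u_K^k)_{K\in\mathcal{T}}$, where $b = (\m(K) u_K^{k-1}/\Delta t)_{K\in\mathcal{T}}$ collects the (nonnegative, not identically zero) data from the previous time step, and then establish that the matrix $A$ is a nonsingular M-matrix (equivalently, that it has the structure of a column- or row-diagonally dominant matrix with positive diagonal and nonpositive off-diagonal entries). Since the potential $p$ is fixed, the flux $\mathcal{F}_{K,\sigma}[u^k,p]$ is indeed linear in $u^k$: both $B_\kappa(\D_\sigma p)$ and the upwind selector in $\widehat{u}_{i,\sigma}^k$ depend only on $p$, not on $u^k$. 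First I would write, for $\sigma = K|L$,
\[
  \mathcal{F}_{K,\sigma}[u^k,p] = -\tau_\sigma\big(B_\kappa(\D_\sigma p) + [\D_{K,\sigma}p]^-\big) u_L^k
  + \tau_\sigma\big(B_\kappa(\D_\sigma p) + [\D_{K,\sigma}p]^+\big) u_K^k,
\]
using $\widehat{u}_{i,\sigma}^k \D_{K,\sigma}p = [\D_{K,\sigma}p]^+ u_L^k - [\D_{K,\sigma}p]^- u_K^k$ and $\D_{K,\sigma}p = [\D_{K,\sigma}p]^+ - [\D_{K,\sigma}p]^-$. Since $B_\kappa > 0$ by (H2) and $[\cdot]^\pm \ge 0$, the coefficient of $u_L^k$ in $\mathcal{F}_{K,\sigma}$ is strictly negative, so in the equation $\m(K)u_K^k/\Delta t + \sum_{\sigma\in\mathcal{E}_K}\mathcal{F}_{K,\sigma} = b_K$ the off-diagonal entry $A_{KL}$ is strictly negative whenever $K$ and $L$ are neighbors, and the diagonal entry $A_{KK} = \m(K)/\Delta t + \sum_{\sigma\in\mathcal{E}_K}\tau_\sigma(B_\kappa(\D_\sigma p) + [\D_{K,\sigma}p]^+)$ is strictly positive.

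The key structural observation is mass conservation: summing \eqref{3.dfp} over $K\in\mathcal{T}$, the flux terms cancel in pairs because $\mathcal{F}_{K,\sigma} = -\mathcal{F}_{L,\sigma}$ for $\sigma = K|L$ (this is the conservativity built into the two-point flux, visible from the discrete integration-by-parts formula \eqref{2.dibp} with $v_K \equiv 1$), which immediately gives $\sum_K \m(K) u_K^k = \sum_K \m(K) u_K^{k-1}$ \emph{provided a solution exists}. At the matrix level, this says $\mathbf{1}^\top A = (\m(K)/\Delta t)_K^\top$ componentwise is $> 0$, i.e. $A$ is strictly column diagonally dominant (each column sum equals $\m(K)/\Delta t > 0$), hence $A$ is a nonsingular M-matrix; in particular $A$ is invertible, giving existence and uniqueness, and $A^{-1} \ge 0$ entrywise. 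Applying $A^{-1} \ge 0$ to $b \ge 0$, $b \neq 0$, yields $u^k \ge 0$.

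For the \emph{strict} positivity $u_K^k > 0$ for all $K$, I would argue by irreducibility: the mesh graph on $\T^d$ is connected, and the off-diagonal pattern of $A$ matches the adjacency of this graph (every internal or periodic edge $\sigma = K|L$ contributes a nonzero $A_{KL}$ and $A_{LK}$), so $A$ is irreducible; for an irreducible M-matrix, $A^{-1}$ is strictly positive (every entry $> 0$). Since $b \ge 0$ has at least one strictly positive entry (by the hypothesis $u_L^{k-1} > 0$ for some $L$), it follows that $u_K^k = (A^{-1} b)_K > 0$ for every $K$. Alternatively, and perhaps cleaner to write up, one can run a discrete minimum-principle argument: at a cell $K_0$ where $u^k$ attains its minimum, the off-diagonal terms force $u_{K_0}^k$ up from the neighbors' values unless all neighbors share the minimum, and the connectedness of the mesh then propagates strict positivity from the cell where $u^{k-1} > 0$ enters the right-hand side. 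The main obstacle in the whole argument is really just bookkeeping: one must be careful that the upwind split of $\widehat{u}_{i,\sigma}^k$ does not accidentally flip the sign of an off-diagonal coefficient — but the identity above shows the diffusive term $\tau_\sigma B_\kappa(\D_\sigma p) > 0$, which comes from (H2), always dominates and keeps $A_{KL} < 0$, so the M-matrix structure is robust and no smallness or monotonicity assumption on $B$ is needed here.
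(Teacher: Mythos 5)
Your proposal is correct and follows essentially the same route as the paper: rewrite \eqref{3.dfp} as a linear system whose matrix has positive diagonal and nonpositive off-diagonal entries with column sums equal to $\m(K)/\Delta t$, conclude that it is a nonsingular M-matrix with nonnegative inverse, obtain strict positivity from the connectivity of the mesh (the paper runs the propagation/contradiction argument you mention as an alternative, rather than invoking the strictly positive inverse of an irreducible M-matrix), and get mass conservation from the conservativity of the two-point flux. One small transcription slip: in your displayed expansion of the flux the roles of $[\D_{K,\sigma}p]^+$ and $[\D_{K,\sigma}p]^-$ are interchanged — the coefficient of $u_L^k$ is $-\tau_\sigma\big(B_\kappa(\D_\sigma p)+[\D_{K,\sigma}p]^+\big)$ and that of $u_K^k$ is $\tau_\sigma\big(B_\kappa(\D_\sigma p)+[\D_{K,\sigma}p]^-\big)$ — but since both bracketed quantities are nonnegative and the column sums are unchanged, none of your conclusions is affected.
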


\begin{proof}
The proof is similar to \cite[Prop.~1]{Bes12}; we present the details for the sake of completeness. We can formulate scheme \eqref{3.dfp} in the matrix form
\begin{align}\label{3.linsys}
  A(p)u^k = S(u^{k-1}),
\end{align}
where the matrix $A(p)$ is defined by
\begin{align*}
  A_{KK}(p) &= \frac{\m(K)}{\Delta t} + \sum_{\sigma\in\mathcal{E}_K}
  \tau_\sigma\big(B_\kappa(\D_{\sigma} p) + [\D_{K,\sigma}p]^-\big)
  \quad\mbox{for }K\in\mathcal{T}, \\
  A_{KL}(p) &= -\tau_\sigma\big(B_\kappa(\D_{\sigma} p) 
  + [\D_{K,\sigma}p]^+\big)\quad\mbox{for }K,L\in\mathcal{T}
  \mbox{ with }\sigma=K|L,
\end{align*}
and the vector $S(u^{k-1})$ is given by
\begin{align*}
  S_K(u^{k-1}) = \frac{\m(K)}{\Delta t}u_K^{k-1}
  \quad\mbox{for }K\in\mathcal{T}.
\end{align*}
The diagonal entries of the matrix $A(p)$ are positive and the off-diagonal entries are nonpositive. Moreover, since $|[\D_{K,\sigma} p]^+| = |[-\D_{L,\sigma}p]^+| = |[\D_{L,\sigma}p]^-|$ for $\sigma=K|L\in\mathcal{E}_K$, we have
\begin{align*}
  |A_{LL}(p)| - \sum_{K\in\mathcal{T},\, K\neq L}|A_{KL}|
  = \frac{\m(K)}{\Delta t} > 0.
\end{align*}
Hence, $A(p)$ is strictly diagonally dominant with respect to the columns. By \cite[Theorem 1.]{Var62}, $A(p)$ is invertible and, in fact, an M-matrix. This gives the existence of a unique solution to the linear system \eqref{3.linsys}. 

The M-matrix property implies that $A(p)$ is inverse-positive, i.e., all entries of $A(p)^{-1}$ are nonnegative. Thus, since the components of $u^{k-1}$ are nonnegative, the components of the solution $u^k$ are nonnegative as well. We claim that they are even positive. By contradiction, assume that $u_{K^*}^k=0$ for some $K^*\in\mathcal{T}$. Then, using scheme \eqref{3.dfp} and definition \eqref{2.uhat} of $\widehat{u}_{i,\sigma}^k$,
\begin{align*}
  0 = \frac{\m(K^*)}{\Delta t}u_{K^*}^{k-1}
  + \sum_{\sigma=K^*|L\in\mathcal{E}_{K^*}}\tau_\sigma
  \big(B_\kappa(\D_{\sigma} p) + [\D_{K^*,\sigma}p]^+\big)u_L^k.
\end{align*}
We know that $u_{K^*}^{k-1}$ is nonnegative and $B_\kappa(s)>0$ for all $s\ge 0$. This implies that $u_L^k=0$ for all neighboring cells $L$ 
of $K^*$. Repeating this argument for all cells in $\mathcal{T}$, we find that $u_K^k=0$ for all $K\in\mathcal{T}$. Then scheme \eqref{3.dfp} leads to $u_K^{k-1}=0$, which contradicts our hypothesis that $u_L^{k-1}>0$ for some $L\in\mathcal{T}$. Hence, $u_K^k>0$ for all $K\in\mathcal{T}$ and $k\ge 1$. 

Finally, the mass conservation is a consequence of the local conservation of the numerical fluxes:
\begin{align*}
  \sum_{K\in\mathcal{T}}\frac{\m(K)}{\Delta t}u_K^k
  = \sum_{K\in\mathcal{T}}\frac{\m(K)}{\Delta t}u_K^{k-1}
  - \sum_{K\in\mathcal{T}}\sum_{\sigma\in\mathcal{E}_K}
  \mathcal{F}_{K,\sigma}^k
  = \sum_{K\in\mathcal{T}}\frac{\m(K)}{\Delta t}u_K^{k-1},
\end{align*}
which finishes the proof.
\end{proof}


\subsection{Proof of Theorem \ref{thm.ex}}

We prove the existence result by induction on $k\ge 0$. For $k=0$, the statement follows from our assumptions. We suppose that $u^{k-1}$ is known for some $k\ge 1$, being nonnegative componentwise and conserving the total mass. We prove the existence of a solution $u^k$ to scheme \eqref{2.euler}--\eqref{2.uhat} by applying Schaefer's fixed-point theorem. To this end, let $u^*=(u_{i,K}^*)$ be given and consider the linear problem
\begin{align*}
  \m(K)\frac{u_{i,K}^k-u_{i,K}^{k-1}}{\Delta t}
  + \sum_{\sigma\in\mathcal{E}_K}\mathcal{F}_{K,\sigma}[u_i^k,p_i^*]
  = 0, \quad
  p_{i,K}^* = \sum_{j=1}^n\sum_{J\in\mathcal{T}}\m(J)
  W_{KJ}^{ij}u_{j,J}^*,
\end{align*}
and $\mathcal{F}_{K,\sigma}[u_i^k,p_i^*]$ is defined in \eqref{2.flux}. For given $p_i^*$, the existence of a solution $u_i^k$ to this linear problem follows from Lemma \ref{lem.lin}. This defines the mapping $S:\R^{n|\mathcal{T}|}\to\R^{n|\mathcal{T}|}$, $S(u^*) = u^k = (u_i^k)_{i=1,\ldots,n}$. Standard arguments show that $S$ is continuous. We infer from mass conservation that the set $\{u^k\in\R^{n|\mathcal{T}|}:$ $\exists\theta\in[0,1]$, $u^k = \theta S(u^*)\}$ is bounded. By Schaefer's fixed-point theorem \cite[Sec.~9.2.2]{Eva98}, there exists a fixed point $u^k$ of $S$, which is a solution to scheme \eqref{2.euler}--\eqref{2.uhat}. The strict positivity and mass conservation of $u^k$ is a result of Lemma \ref{lem.lin}. 

Next, we verify the discrete Boltzmann entropy inequality \eqref{2.HB}. We abbreviate $B_{i,\sigma}^k:=B(\kappa^{-1}\D_{\sigma} p_i^k)$. We multiply scheme \eqref{2.euler} by $\log u_{i,K}^k$ (which is well-defined since $u_{i,K}^k>0$) and sum over $i=1,\ldots,n$ and $K\in\mathcal{T}$. This gives $I_1+I_2=0$, where
\begin{align*}
  I_1 = \frac{1}{\Delta t}\sum_{i=1}^n\sum_{K\in\mathcal{T}}
  \m(K)(u_{i,K}^k-u_{i,K}^{k-1})\log u_{i,K}^k, \quad
  I_2 = \sum_{i=1}^n\sum_{K\in\mathcal{T}}\sum_{\sigma\in\mathcal{E}_K}
  \mathcal{F}_{K,\sigma}[u_i^k,p_i^k]\log u_{i,K}^k.
\end{align*}
It follows from the convexity of $s\mapsto s(\log s-1)$ that
\begin{align*}
  I_1 \ge \frac{1}{\Delta t}\big(H_B(u^k) - H_B(u^{k-1})\big).
\end{align*}
We apply discrete integration by parts (see \eqref{2.dibp}) to find that
\begin{align*}
  I_2 &= -\sum_{i=1}^n\sum_{\sigma=K|L\in\mathcal{E}}
  \mathcal{F}_{K,\sigma}[u_i^k,p_i^k]\D_{K,\sigma}\log u_i^k \\
  &= \sum_{i=1}^n\sum_{\sigma=K|L\in\mathcal{E} }
  \tau_\sigma\big(\kappa B_{i,\sigma}^k\D_{K,\sigma}u_i^k
  + \widehat{u}_{i,\sigma}^k\D_{K,\sigma}p_i^k\big)
  \D_{K,\sigma}\log u_i^k = I_{21} + I_{22}.
\end{align*}
The diffusion part $I_{21}$ is estimated by using the elementary inequality $(\log a-\log b)(a-b)\ge 4(\sqrt{a}-\sqrt{b})^2$ for $a,b>0$:
\begin{align*}
  I_{21} \ge 4\kappa\sum_{i=1}^n\sum_{\sigma\in\mathcal{E}}
  \tau_\sigma B_{i,\sigma}^k|\D_\sigma(u_i^k)^{1/2}|^2.
\end{align*} 
It follows from $\log s\le s-1$ with $s=u_{i,K}^k/u_{i,L}^k$ and $s=u_{i,L}^k/u_{i,K}^k$ that
\begin{align*}
  \frac{\D_{K,\sigma}u_i^k}{u_{i,L}^k} 
  = 1 - \frac{u_{i,K}^k}{u_{i,L}^k} 
  \le \log\frac{u_{i,L}^k}{u_{i,K}^k} 
  = \D_{K,\sigma}\log u_i^k
  \le \frac{u_{i,L}^k}{u_{i,K}^k} - 1
  = \frac{\D_{K,\sigma}u_i^k}{u_{i,K}^k},
\end{align*}
and consequently,
\begin{align*}
  I_{22} &= \sum_{i=1}^n\sum_{\sigma=K|L\in\mathcal{E}}\tau_\sigma
  \big([\D_{K,\sigma}p_i^k]^+ u_{i,L}^k\D_{K,\sigma}\log u_i^k
  - [\D_{K,\sigma}p_i^k]^- u_{i,K}^k\D_{K,\sigma}\log u_i^k\big) \\
  &\ge \sum_{i=1}^n\sum_{\sigma=K|L\in\mathcal{E}}\tau_\sigma
  \big([\D_{K,\sigma}p_i^k]^+\D_{K,\sigma}u_i^k
  - [\D_{K,\sigma}p_i^k]^-\D_{K,\sigma}u_i^k\big) \\
  &= \sum_{i=1}^n\sum_{\sigma=K|L\in\mathcal{E}}\tau_\sigma
  \D_{K,\sigma}p_i^k\D_{K,\sigma}u_i^k.
\end{align*}
Putting the estimates together, we end up with
\begin{align*}
  0 = I_1 + I_2 &\ge \frac{1}{\Delta t}\big(H_B(u^k) - H_B(u^{k-1})\big)
  + 4\kappa\sum_{i=1}^n\sum_{\sigma\in\mathcal{E}}
  \tau_\sigma B_{i,\sigma}^k|\D_\sigma(u_i^k)^{1/2}|^2 \\
  &\phantom{xx}+ \sum_{i=1}^n\sum_{\sigma=K|L\in\mathcal{E}}\tau_\sigma
  \D_{K,\sigma}p_i^k\D_{K,\sigma}u_i^k,
\end{align*}
which proves inequality \eqref{2.HB}. 

It remains to verify the discrete Rao entropy inequality \eqref{2.HR}. For this, we multiply scheme \eqref{2.euler} by $p_{i,K}^k$ and sum over $i=1,\ldots,n$ and $K\in\mathcal{T}$, giving $I_3+I_4=0$, where
\begin{align*}
  I_3 = \frac{1}{\Delta t}\sum_{i=1}^n\sum_{K\in\mathcal{T}}
  \m(K)(u_{i,K}^k-u_{i,K}^{k-1})p_{i,K}^k, \quad
  I_4 = \sum_{i=1}^n\sum_{K\in\mathcal{T}}\sum_{\sigma\in\mathcal{E}_K}
  \mathcal{F}_{K,\sigma}[u_i^k,p_i^k]p_{i,K}^k.
\end{align*}
To estimate $I_3$, we distinguish between the fully implicit scheme and the mid-point method. In the former case, we use definition \eqref{3.repp} of $p_{i,K}^k$ and the symmetry $W_{KJ}^{ij}=W_{JK}^{ji}$, which follows from Hypothesis (H4), to find that
\begin{align*}
  I_3 &= \frac{1}{\Delta t}\sum_{i,j=1}^n\sum_{K,J\in\mathcal{T}}
  \m(K)\m(J)W_{KJ}^{ij}u_{j,J}^k(u_{i,K}^k-u_{i,K}^{k-1}) \\
  &= \frac{1}{2\Delta t}\sum_{i,j=1}^n\sum_{K,J\in\mathcal{T}}\m(K)\m(J)
  W_{KJ}^{ij}(u_{i,K}^ku_{j,J}^k - u_{i,K}^{k-1}u_{j,J}^{k-1}) \\
  &\phantom{xx}+ \frac{1}{2\Delta t}\sum_{i,j=1}^n
  \sum_{K,J\in\mathcal{T}}\m(K)\m(J)
  W_{KJ}^{ij}(u_{i,K}^k-u_{i,K}^{k-1})(u_{j,J}^{k}-u_{j,J}^{k-1}).
\end{align*}
By \eqref{2.Wpos}, which follows from Hypothesis (H5), the second term on the right-hand side is nonnegative, which gives
\begin{align*}
  I_3 \ge \frac{1}{\Delta t}\big(H_R(u^k)-H_R(u^{k-1})\big).
\end{align*}
For the mid-point scheme, we use definition \eqref{3.attp} of $p_{i,K}^k$:
\begin{align*}
  I_3 &= \frac{1}{2\Delta t}\sum_{i,j=1}^n
  \sum_{K,J\in\mathcal{T}}\m(K)\m(J)
  W_{KJ}^{ij}(u_{i,K}^k-u_{i,K}^{k-1})(u_{j,J}^k+u_{j,J}^{k-1}) \\
  &= \frac{1}{2\Delta t}\sum_{i,j=1}^n
  \sum_{K,J\in\mathcal{T}}\m(K)\m(J)W_{KJ}^{ij}
  \big(u_{i,K}^ku_{j,J}^k - u_{i,K}^{k-1}u_{j,J}^{k-1}\big) \\
  &\phantom{xx}+ \frac{1}{2\Delta t}\sum_{i,j=1}^n
  \sum_{K,J\in\mathcal{T}}\m(K)\m(J)
  W_{KJ}^{ij}u_{i,K}^ku_{j,J}^{k-1} \\
  &\phantom{xx}- \frac{1}{2\Delta t}\sum_{i,j=1}^n
  \sum_{K,J\in\mathcal{T}}\m(K)\m(J)
  W_{KJ}^{ij}u_{i,K}^{k-1}u_{j,J}^k.
\end{align*}
The symmetry property in Hypothesis (H4) shows that the last two terms cancel. Therefore,
\begin{align*}
  I_3 = \frac{1}{\Delta t}\big(H_R(u^k)-H_R(u^{k-1})\big).
\end{align*} 

We turn to the estimate of $I_4$. By discrete integration by parts,
\begin{align*}
  I_4 = \sum_{i=1}^n\sum_{\sigma=K|L\in\mathcal{E} }\tau_\sigma
  \big(\kappa B_{i,\sigma}^k\D_{K,\sigma}u_i^k
  + \widehat{u}_{i,\sigma}^k\D_{K,\sigma}p_i^k\big)\D_{K,\sigma}p_i^k
  = I_{41} + I_{42}. 
\end{align*}
The drift part $I_{42}$ will be used to absorb part of $I_{41}$. We rewrite the diffusion part by splitting $\kappa B_{i,\sigma}^k = \kappa + \kappa(B_{i,\sigma}^k-1)$ and using the definition of the cross term $X(u^k,p^k)$ in Theorem \ref{thm.ex}:
\begin{align*}
  I_{41} = \kappa X(u^k,p^k) 
  + \kappa\sum_{i=1}^n\sum_{\sigma=K|L\in\mathcal{E} }
  \tau_\sigma(B_{i,\sigma}^k-1)\D_{K,\sigma}u_i^k\D_{K,\sigma}p_i^k.
\end{align*}
To estimate the second term on the right-hand side, we insert $\D_{K,\sigma}p_i^k = [\D_{K,\sigma}p_i^k]^+ - [\D_{K,\sigma}p_i^k]^-$ and use
\begin{align*}
  -\D_{K,\sigma}u_i^k\D_{K,\sigma}p_i^k
  &= -(u_{i,L}^k-u_{i,K}^k)\big([\D_{K,\sigma}p_i^k]^+ 
  - [\D_{K,\sigma}p_i^k]^-\big) \\
  &\ge -u_{i,L}^k[\D_{K,\sigma}p_i^k]^+ 
  - u_{i,K}^k[\D_{K,\sigma}p_i^k]^-.
\end{align*}
Then, together with inequality 
\begin{align}\label{3.Bineq}
  0\le 1 - B_{i,\sigma}^k \le \frac{\alpha}{\kappa}\D_\sigma p_i^k,
\end{align}
which follows from Hypothesis (H4), and definition \eqref{2.uhat} of $\widehat{u}_{i,\sigma}^k$, we find that
\begin{align*}
  I_{41} &\ge \kappa X(u^k,p^k) 
  - \alpha\sum_{i=1}^n\sum_{\sigma=K|L\in\mathcal{E} }
  \tau_\sigma\D_\sigma p_i^k\big(u_{i,L}^k[\D_{K,\sigma}p_i^k]^+ 
  + u_{i,K}^k[\D_{K,\sigma}p_i^k]^-\big) \\
  &\ge \kappa X(u^k,p^k) 
  - \alpha\sum_{i=1}^n\sum_{\sigma\in\mathcal{E} }
  \tau_\sigma\widehat{u}_{i,\sigma}^k|\D_\sigma p_i^k|^2.
\end{align*}
We conclude that
\begin{align*}
  0 \ge \frac{1}{\Delta t}\big(H_R(u^k)-H_R(u^{k-1})\big)
  + (1-\alpha)\sum_{\sigma\in\mathcal{E} }\tau_\sigma
  \widehat{u}_{i,\sigma}^k|\D_\sigma p_i^k|^2
  + \kappa X(u^k,p^k),
\end{align*}
which equals the discrete Rao entropy inequality \eqref{2.HR}. 
The proof of Theorem \ref{thm.ex} is finished.


\section{Uniform estimate for the Fisher information}\label{sec.fisher}

We cannot conclude a uniform bound directly from \eqref{2.HB} for the discrete gradient $\D_\sigma(u_i^k)^{1/2}$ because of the factor $B_\kappa(\D_\sigma p_i^k)$ that is generally not bounded from below by a positive constant. Therefore, we derive first a bound for the Fisher information depending on the entropy productions $P_B$, $P_R$ and the cross term $K$. A combination of the entropy inequalities then yields the desired gradient bound.

\subsection{Fisher information}

We show an inequality for the Fisher information.

\begin{lemma}[Fisher information]\label{lem.fisher}
Let Hypotheses (H1)--(H3) hold and $0\le\alpha<1$. Then
\begin{align*}
  \kappa(1-\alpha)\sum_{i=1}^n\sum_{\sigma\in\mathcal{E}}
  \tau_\sigma|\D_\sigma(u_i^k)^{1/2}|^2
  \le \frac14 P_B(u^k,p^k) + \frac{\alpha}{\kappa}P_R(u^k,p^k) 
  - \alpha X(u^k,p^k),
\end{align*}
where $P_B$, $P_R$, and $K$ are defined in Theorem \ref{thm.ex}. 
\end{lemma}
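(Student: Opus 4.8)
My plan is to establish the estimate in local form, for each fixed species $i$ and each fixed edge $\sigma=K|L$ separately, and then sum. Since the left-hand side and each of $P_B$, $P_R$, $X$ are sums over $i=1,\dots,n$ and $\sigma\in\mathcal E$ against the common positive weight $\tau_\sigma$, it suffices to prove, for each such $i$ and $\sigma$,
\begin{equation*}
  \kappa(1-\alpha)\,\big|\D_\sigma(u_i^k)^{1/2}\big|^2
  \le \kappa B_{i,\sigma}^k\,\big|\D_\sigma(u_i^k)^{1/2}\big|^2
  + \frac{\alpha}{\kappa}\,\widehat u_{i,\sigma}^k\,\big|\D_\sigma p_i^k\big|^2
  - \alpha\,(\D_{K,\sigma}p_i^k)(\D_{K,\sigma}u_i^k),
\end{equation*}
where $B_{i,\sigma}^k:=B(\kappa^{-1}\D_\sigma p_i^k)$, so that the contribution of a single edge to $\frac14 P_B$ is $\kappa B_{i,\sigma}^k|\D_\sigma(u_i^k)^{1/2}|^2$ (this is precisely the bound on $I_{21}$ obtained in the proof of Theorem~\ref{thm.ex}). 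For the local step I would set $s:=\D_\sigma p_i^k\ge0$ and introduce the strictly positive roots $\xi:=(u_{i,K}^k)^{1/2}$ and $\eta:=(u_{i,L}^k)^{1/2}$ (positivity from Theorem~\ref{thm.ex}), so that $|\D_\sigma(u_i^k)^{1/2}|^2=(\eta-\xi)^2$ and $\D_{K,\sigma}u_i^k=\eta^2-\xi^2=(\eta-\xi)(\eta+\xi)$.

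The decisive ingredient is a linear control of the defect of the Bernoulli weight, which follows from Hypotheses (H2)--(H3): for all $s\ge0$,
\begin{equation*}
  \kappa\big(1-\alpha-B_{i,\sigma}^k\big)\le\alpha\,(s-\kappa).
\end{equation*}
For $0\le s\le\kappa/\alpha$ this is the rearrangement of $B(s/\kappa)\ge1-\alpha s/\kappa$ from (H3); for $s>\kappa/\alpha$ the right-hand side exceeds $\kappa(1-\alpha)$, while the left-hand side is strictly less than $\kappa(1-\alpha)$ because $B_{i,\sigma}^k>0$. Multiplying this inequality by $(\eta-\xi)^2\ge0$ bounds $\kappa(1-\alpha)(\eta-\xi)^2-\kappa B_{i,\sigma}^k(\eta-\xi)^2$ by $\alpha(s-\kappa)(\eta-\xi)^2$, so that the local estimate reduces (after dividing by $\alpha$; the case $\alpha=0$, in which (H2)--(H3) force $B_{i,\sigma}^k\equiv1$, being trivial) to
\begin{equation*}
  (s-\kappa)(\eta-\xi)^2 + (\D_{K,\sigma}p_i^k)(\eta^2-\xi^2)
  \le \frac{1}{\kappa}\,\widehat u_{i,\sigma}^k\,s^2.
\end{equation*}

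This last inequality I would prove by distinguishing the two upwind branches. If $\D_{K,\sigma}p_i^k\ge0$, then $\D_{K,\sigma}p_i^k=s$ and $\widehat u_{i,\sigma}^k=\eta^2$; using $\eta^2-\xi^2=(\eta-\xi)(\eta+\xi)$, the left-hand side telescopes to $2s\eta(\eta-\xi)-\kappa(\eta-\xi)^2$, which Young's inequality $2s\eta(\eta-\xi)\le\frac1\kappa(s\eta)^2+\kappa(\eta-\xi)^2$ bounds by $\frac1\kappa s^2\eta^2$. If $\D_{K,\sigma}p_i^k<0$, then $\D_{K,\sigma}p_i^k=-s$ and $\widehat u_{i,\sigma}^k=\xi^2$, and the identical manipulation gives $2s\xi(\xi-\eta)-\kappa(\xi-\eta)^2\le\frac1\kappa s^2\xi^2$. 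The degenerate case $s=0$ is immediate since then $B_{i,\sigma}^k=1$. This proves the local inequality, and multiplying by $\tau_\sigma$ and summing over $i$ and $\sigma$ yields the lemma.

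The algebra is elementary, so the only genuinely delicate point is the weight in Young's inequality: it must be exactly $\kappa$, so that the term $\kappa(\eta-\xi)^2$ it produces cancels the leftover $-\kappa(\eta-\xi)^2$ coming from the defect bound and the residual is precisely $\frac1\kappa\widehat u_{i,\sigma}^k s^2$; any other weight introduces a spurious multiple of $(\eta-\xi)^2$ with the wrong sign, and the full Fisher information $|\D_\sigma(u_i^k)^{1/2}|^2$ cannot be recovered on the left. Hypothesis (H3) enters only through the defect bound $\kappa(1-\alpha-B_{i,\sigma}^k)\le\alpha(\D_\sigma p_i^k-\kappa)$, which is exactly what trades the possibly degenerate factor $B_\kappa(\D_\sigma p_i^k)$ for an estimate linear in $\D_\sigma p_i^k$.
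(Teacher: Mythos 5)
Your proof is correct, and it takes a genuinely more local route than the paper's. You reduce the claim to a single per-edge inequality, establish the defect bound $\kappa(1-\alpha-B_{i,\sigma}^k)\le\alpha(\D_\sigma p_i^k-\kappa)$ for all values of $\D_\sigma p_i^k$ (including the regime beyond $\kappa/\alpha$, where (H2) alone suffices), and close the edgewise estimate by distinguishing the two upwind branches and applying Young's inequality with weight exactly $\kappa$; summing over $i$ and $\sigma$ then reproduces the lemma with the same constants. The paper argues globally instead: it writes the Fisher information as $\tfrac14P_B$ plus $\alpha R$ with $R=\sum_{i,\sigma}\tau_\sigma|\D_\sigma p_i^k|\,|\D_\sigma(u_i^k)^{1/2}|^2$, decomposes $R=X_{\mathrm{up}}-X_{\mathrm{down}}$ into upwind and downwind cross terms, bounds $X_{\mathrm{up}}$ by Cauchy--Schwarz against $\tfrac1{2\kappa}P_R$, uses an algebraic identity to get $-X_{\mathrm{down}}=\tfrac12R-\tfrac12X$, and solves the resulting self-referential inequality for $R$. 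Both arguments rest on the same three ingredients---the linear lower bound \eqref{3.Bineq} on $B$, the upwind structure of $\widehat u_{i,\sigma}^k$, and a Young inequality calibrated to the weight $\kappa$ (your remark that any other weight leaves an uncancelled multiple of $|\D_\sigma(u_i^k)^{1/2}|^2$ is exactly the point)---so yours is essentially a localized, and arguably more transparent, reorganization: it avoids the global bookkeeping with $X_{\mathrm{up}}$, $X_{\mathrm{down}}$, while the paper's formulation keeps visible the discrete analogues of the continuous cross-term manipulations. Two reading notes confirm your reduction is safe: your edgewise interpretation of $\tfrac14P_B$ as $\kappa B_{i,\sigma}^k|\D_\sigma(u_i^k)^{1/2}|^2$ is the one the paper's own proof uses (the displayed definition of $P_B$ carries a redundant $\kappa$ through $B_\kappa$), and the extra $j$-sum in the displayed $P_R$ only enlarges the right-hand side.
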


\begin{proof}
The discrete analog of the chain rule $\na v = 2\sqrt{v}\na\sqrt{v}$ reads as
\begin{align}\label{3.chain}
  \D_{K,\sigma}u_i^k = 2(\bar{u}_{i,\sigma}^k)^{1/2}
  \D_{K,\sigma}(u_i^k)^{1/2},
\end{align}
where the power mean $\bar{u}_{i,\sigma}^k$ is defined by
\begin{align}\label{3.baru}
  \bar{u}_{i,\sigma}^k = \bigg(\frac12(u_{i,K}^k)^{1/2}
  + \frac12(u_{i,L}^k)^{1/2}\bigg)^2 \quad\mbox{for }\sigma=K|L.
\end{align}
The discrete chain rule \eqref{3.chain} follows directly from
\begin{align*}
  \D_{K,\sigma}u_i^k = \big((u_{i,L}^k)^{1/2}+(u_{i,K}^k)^{1/2}\big)
  \big((u_{i,L}^k)^{1/2}-(u_{i,K}^k)^{1/2}\big).
\end{align*}
Thus, the cross term $X(u^k,p^k)$ can be written as
\begin{align*}
  X(u^k,p^k) = 2\sum_{i=1}^n\sum_{\sigma=K|L\in\mathcal{E} }
  \tau_\sigma(\bar{u}_{i,\sigma}^k)^{1/2}\D_{K,\sigma}p_i^k
  \D_{K,\sigma}(u_i^k)^{1/2}.
\end{align*}
We define the upwind and downwind cross terms by
\begin{align*}
  X_{\rm up}(u^k,p^k) &= \sum_{i=1}^n
  \sum_{\sigma=K|L\in\mathcal{E} }\tau_\sigma
  (\widehat{u}_{i,\sigma}^{k}(p_i^k))^{1/2}\D_{K,\sigma}p_i^k
  \D_{K,\sigma}(u_i^k)^{1/2}, \\
  X_{\rm down}(u^k,p^k) &= \sum_{i=1}^n
  \sum_{\sigma=K|L\in\mathcal{E} }\tau_\sigma
  (\widehat{u}_{i,\sigma}^{k}(-p_i^k))^{1/2}\D_{K,\sigma}(-p_i^k)
  \D_{K,\sigma}(u_i^k)^{1/2}.
\end{align*}
Referring to definition \eqref{2.uhat} of the upwind concentration $\widehat{u}_{i,\sigma}^k=\widehat{u}_{i,\sigma}^k(p_i^k)$, it follows that $\widehat{u}_{i,\sigma}^{k}(-p_i^k)$ corresponds to the downwind concentration.

Splitting $\kappa B_{i,\sigma}^k = \kappa + \kappa(B_{i,\sigma}^k-1)$, we reformulate the discrete Fisher information as
\begin{align}\label{3.aux0}
  4\kappa\sum_{i=1}^n\sum_{\sigma\in\mathcal{E} }
  \tau_\sigma|\D_\sigma(u_i^k)^{1/2}|^2
  &= P_B(u^k,p^k) 
  + 4\kappa\sum_{i=1}^n\sum_{\sigma\in\mathcal{E} }
  \tau_\sigma(1-B_{i,\sigma}^k)|\D_\sigma(u_i^k)^{1/2}|^2 \\
  &\le P_B(u^k,p^k) 
  + 4\alpha\sum_{i=1}^n\sum_{\sigma\in\mathcal{E} }
  \tau_\sigma|\D_\sigma p_i^k||\D_\sigma(u_i^k)^{1/2}|^2, \nonumber 
\end{align}
where the last step follows from inequality \eqref{3.Bineq} for $B_{i,\sigma}^k$. We estimate the last term
\begin{align}\label{3.R}
  R(u^k,p^k) := \sum_{i=1}^n\sum_{\sigma\in\mathcal{E} }
  \tau_\sigma|\D_\sigma p_i^k||\D_\sigma(u_i^k)^{1/2}|^2.
\end{align}
Splitting $|\D_\sigma p_i^k|=[\D_{K,\sigma}p_i^k]^+ + [\D_{K,\sigma}p_i^k]^-$, we divide $R(u^k,p^k)$ into an upwind and a downwind part and use the property $[\D_{K,\sigma}p_i^k]^{\pm} = -[-\D_{K,\sigma}p_i^k]^{\mp}$:
\begin{align}
  R(u^k,p^k) &= \sum_{i=1}^n\sum_{\sigma=K|L\in\mathcal{E} }
  \tau_\sigma\big([\D_{K,\sigma}p_i^k]^+ 
  + [\D_{K,\sigma}p_i^k]^-\big)
  \big((u_{i,L}^k)^{1/2}-(u_{i,K}^k)^{1/2}\big)
  \D_{K,\sigma}(u_i^k)^{1/2} \nonumber \\
  &= \sum_{i=1}^n\sum_{\sigma=K|L\in\mathcal{E} }
  \tau_\sigma\big([\D_{K,\sigma}p_i^k]^+(u_{i,L}^k)^{1/2}
  - [\D_{K,\sigma}p_i^k]^-(u_{i,K}^k)^{1/2}\big)\D_{K,\sigma}
  (u_{i}^k)^{1/2} \label{3.aux} \\
  &\phantom{xx}+ \sum_{i=1}^n\sum_{\sigma=K|L\in\mathcal{E} }
  \tau_\sigma\big([\D_{K,\sigma}p_i^k]^-(u_{i,L}^k)^{1/2}
  - [\D_{K,\sigma}p_i^k]^+(u_{i,K}^k)^{1/2}\big)
  \D_{K,\sigma}(u_{i}^k)^{1/2} \nonumber \\
  &= \sum_{i=1}^n\sum_{\sigma=K|L\in\mathcal{E} }
  \tau_\sigma\big([\D_{K,\sigma}p_i^k]^+(u_{i,L}^k)^{1/2}
  - [\D_{K,\sigma}p_i^k]^-(u_{i,K}^k)^{1/2}\big)\D_{K,\sigma}
  (u_{i}^k)^{1/2} \nonumber \\
  &\phantom{xx}- \sum_{i=1}^n\sum_{\sigma=K|L\in\mathcal{E} }
  \tau_\sigma\big([-\D_{K,\sigma}p_i^k]^+(u_{i,L}^k)^{1/2}
  - [-\D_{K,\sigma}p_i^k]^-(u_{i,K}^k)^{1/2}\big)
  \D_{K,\sigma}(u_{i}^k)^{1/2} \nonumber \\
  &= \sum_{i=1}^n\sum_{\sigma=K|L\in\mathcal{E} }\tau_\sigma
  \big((\widehat{u}_{i,\sigma}^{k}(p_i^k))^{1/2}\D_{K,\sigma}p_i^k
  - (\widehat{u}_{i,\sigma}^{k}(-p_i^k))^{1/2}\D_{K,\sigma}(-p_i^k)\big)
  \D_{K,\sigma}(u_{i}^k)^{1/2} \nonumber \\
  &= X_{\rm up}(u^k,p^k) - X_{\rm down}(u^k,p^k). \nonumber 
\end{align} 
We need to estimate the upwind and downwind cross terms. First, by the Cauchy--Schwarz inequality,
\begin{align}\label{3.Cup}
  X_{\rm up}(u^k,p^k) \le \frac{\kappa}{2}\sum_{i=1}^n
  \sum_{\sigma\in\mathcal{E} }\tau_\sigma
  |\D_\sigma(u_i^k)^{1/2}|^2 + \frac{1}{2\kappa}P_R(u^k,p^k).
\end{align}
To estimate the downwind cross term, we apply the identity $a_1b_1-a_2b_2 = \frac12(a_1-a_2)(b_1+b_2) + \frac12(a_1+a_2)(b_1-b_2)$ to $a_1 = (u_{i,L}^k)^{1/2}$, $a_2 = (u_{i,K}^k)^{1/2}$, $b_1 = [\D_{K,\sigma}p_i^k]^-$, $b_2 = [\D_{K,\sigma}p_i^k]^+$, which yields
\begin{align*}
  [\D_{K,\sigma}p_i^k]^-&(u_{i,L}^k)^{1/2}
  - [\D_{K,\sigma}p_i^k]^+(u_{i,K}^k)^{1/2} \\
  &= \frac12|\D_{K,\sigma}p_i^k|\D_{K,\sigma}(u_i^k)^{1/2}
  - \frac12\D_{K,\sigma}p_i^k
  \big((u_{i,K}^k)^{1/2}+(u_{i,L}^k)^{1/2}\big).
\end{align*}
We insert this identity into the downwind cross term:
\begin{align*}
  -X_{\rm down}(u^k,p^k) &= \sum_{i=1}^n
  \sum_{\sigma=K|L\in\mathcal{E} }
  \tau_\sigma\big([\D_{K,\sigma}p_i^k]^-(u_{i,L}^k)^{1/2}
  - [\D_{K,\sigma}p_i^k]^+(u_{i,K}^k)^{1/2}\big)
  \D_{K,\sigma}(u_{i}^k)^{1/2} \\
  &= \frac12\sum_{i=1}^n\sum_{\sigma=K|L\in\mathcal{E} }
  \tau_\sigma|\D_{K,\sigma}p_i^k||\D_{K,\sigma}(u_i^k)^{1/2}|^2 \\
  &\phantom{xx}- \frac12\sum_{i=1}^n
  \sum_{\sigma=K|L\in\mathcal{E} }\tau_\sigma
  \D_{K,\sigma}p_i^k\big((u_{i,K}^k)^{1/2}+(u_{i,L}^k)^{1/2}\big)
  \D_{K,\sigma}(u_{i}^k)^{1/2} \\
  &= \frac12\sum_{i=1}^n\sum_{\sigma=K|L\in\mathcal{E} }
  \tau_\sigma|\D_{\sigma}p_i^k||\D_{K,\sigma}(u_i^k)^{1/2}|^2 
  - \frac12\sum_{i=1}^n\sum_{\sigma=K|L\in\mathcal{E} }\tau_\sigma
  \D_{K,\sigma}p_i^k\D_{K,\sigma}u_i^k,
\end{align*}
where we used {\color{blue} use} in the last step the identity
\begin{align*}
  \big((u_{i,K}^k)^{1/2}+(u_{i,L}^k)^{1/2}\big)
  \D_{K,\sigma}(u_{i}^k)^{1/2}
  = u_{i,L}^k - u_{i,K}^k = \D_{K,\sigma}u_i^k.
\end{align*}
We infer from definition \eqref{3.R} of $R(u^k,p^k)$ that
\begin{align*}
  -X_{\rm down}(u^k,p^k) = \frac12 R(u^k,p^k) - \frac12 X(u^k,p^k).
\end{align*} 
Substituting \eqref{3.Cup} and the previous expression into \eqref{3.aux} yields
\begin{align*}
  R(u^k,p^k) \le \kappa\sum_{i=1}^n
  \sum_{\sigma\in\mathcal{E} }\tau_\sigma
  |\D_\sigma(u_i^k)^{1/2}|^2 + \frac{1}{\kappa}P_R(u^k,p^k)
  - X(u^k,p^k).
\end{align*}
Finally, we insert this inequality into \eqref{3.aux0}, written as
\begin{align*}
  \kappa\sum_{i=1}^n\sum_{\sigma\in\mathcal{E} }
  \tau_\sigma|\D_\sigma(u_i^k)^{1/2}|^2
  \le \frac14 P_B(u^k,p^k) + \alpha R(u^k,p^k),
\end{align*}
to conclude the proof.
\end{proof}


\subsection{Estimate for the fully implicit scheme}

First, we prove a discrete analog of the differentiation rule $\na(B*u)=B*\na u$.

\begin{lemma}\label{lem.diff}
Let $u_i=(u_{i,K})_{K\in\mathcal{T}}$ be given and let $p_i=(p_{i,K})_{K\in\mathcal{T}}$ be defined by 
\begin{align*}
  p_{i,K} = \sum_{j=1}^n\sum_{J\in\mathcal{T}}\m(J)W_{KJ}^{ij}
  u_{j,J}.
\end{align*}
Then, for any $\ell\in\{\pm 1,\ldots,\pm d\}$,
\begin{align*}
  \D_{K,\ell} p_i = \sum_{j=1}^n\sum_{J\in\mathcal{T}}\m(J)
  W_{KJ}^{ij}\D_{J,\ell}u_j.
\end{align*}
\end{lemma}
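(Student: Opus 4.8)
The statement is a discrete analog of the identity $\nabla(W * u) = W * \nabla u$, so the natural approach is to compute $\D_{K,\ell} p_i$ directly from the definition and shift the discrete kernel index. Fix $\ell \in \{\pm 1, \ldots, \pm d\}$ and let $\sigma = K|L$ be the edge with $x_K = x_L + \Delta x_\ell e_\ell$, so that by definition \eqref{2.DKell}, $\D_{K,\ell} p_i = \D_{K,\sigma} p_i = p_{i,L} - p_{i,K}$. Inserting the definition of $p_{i,K}$ and $p_{i,L}$,
\begin{align*}
  \D_{K,\ell} p_i
  = \sum_{j=1}^n \sum_{J \in \mathcal{T}} \m(J)
  \big(W_{LJ}^{ij} - W_{KJ}^{ij}\big) u_{j,J}.
\end{align*}

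**Key step: reindexing.** The crux is to recognize that, because the mesh is uniform and periodic (toroidal), the map $J \mapsto J'$ that shifts a cell by $\Delta x_\ell e_\ell$ is a bijection of $\mathcal{T}$, and the discrete kernel \eqref{2.Wij} is translation-covariant in the sense that $W_{LJ}^{ij} = W_{K J''}^{ij}$ where $J''$ is the cell obtained from $J$ by the inverse shift (i.e.\ $x_{J''} = x_J + \Delta x_\ell e_\ell$, matching $x_L = x_K + \Delta x_\ell e_\ell$). This follows from the definition \eqref{2.Wij}: since $W_{ij}$ is a function of $x - y$ alone and is extended periodically, a simultaneous translation of both integration cells $K \to L$ and $J \to J''$ leaves $\int_K \int_J W_{ij}(x-y)\,\dd y\,\dd x$ unchanged after the change of variables, and $\m(K) = \m(L)$, $\m(J) = \m(J'')$ by uniformity. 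Using this,
\begin{align*}
  \sum_{J \in \mathcal{T}} \m(J) W_{LJ}^{ij} u_{j,J}
  = \sum_{J'' \in \mathcal{T}} \m(J'') W_{KJ''}^{ij} u_{j,J''^{-}},
\end{align*}
where $J''^{-}$ denotes the shift of $J''$ by $-\Delta x_\ell e_\ell$; relabeling $J'' \to J$ and combining with the $W_{KJ}^{ij}$ term gives
\begin{align*}
  \D_{K,\ell} p_i
  = \sum_{j=1}^n \sum_{J \in \mathcal{T}} \m(J) W_{KJ}^{ij}
  \big(u_{j,J^{-}} - u_{j,J}\big),
\end{align*}
and since $x_{J^{-}} = x_J - \Delta x_\ell e_\ell$, the pair $J | J^{-}$ is an edge with outer normal direction $-e_\ell$ from $J$... more carefully, with the sign conventions \eqref{2.DKell}, one checks $u_{j,J^{-}} - u_{j,J} = \D_{J,\ell} u_j$, yielding the claim.

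**Main obstacle.** The only real subtlety is bookkeeping the signs and the direction of the shift so that $u_{j,J^{-}} - u_{j,J}$ is identified with $\D_{J,\ell} u_j$ rather than $\D_{J,-\ell} u_j$ or $-\D_{J,\ell} u_j$. Concretely: $\D_{K,\ell} u := \D_{K,\sigma} u = u_L - u_K$ where $\sigma = K|L$ with $x_K = x_L + \Delta x_\ell e_\ell$, i.e.\ $x_L = x_K - \Delta x_\ell e_\ell$; so $\D_{J,\ell} u_j = u_{j, J^{-}} - u_{j, J}$ with $x_{J^{-}} = x_J - \Delta x_\ell e_\ell$, which is exactly what appears. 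This matches once one carefully tracks which cell plays the role of "$L$" relative to "$K$" in definition \eqref{2.DKell}. I would present the computation with $\ell > 0$ and $\ell < 0$ handled uniformly via the $e_{-\ell} = -e_\ell$ convention, emphasizing that the toroidal periodicity is what makes the reindexing a genuine bijection without boundary remainder terms. This is routine once the translation-covariance $W_{LJ}^{ij} = W_{K,J^{\,\mathrm{shift}}}^{ij}$ is established, which is the one place the uniform Cartesian structure of the mesh is essential.
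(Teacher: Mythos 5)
Your proof is correct and follows essentially the same route as the paper's: both arguments rest on the translation invariance of $W_{ij}(x-y)$ under a simultaneous shift of the two cells together with the toroidal periodicity, the paper carrying out the change of variables inside the defining integrals of $W_{KJ}^{ij}$ and $\pi_\delta u_j$, while you phrase the same step as the discrete covariance $W_{LJ}^{ij}=W_{KJ''}^{ij}$ followed by a reindexing of the sum over cells. The only blemish is the parenthetical ``matching $x_L=x_K+\Delta x_\ell e_\ell$'', which contradicts the convention $x_K=x_L+\Delta x_\ell e_\ell$ you fixed at the outset (a confusion already present in the paper, whose proof restates \eqref{2.DKell} with the opposite orientation); since your actual computation, in particular $x_{J''}=x_J+\Delta x_\ell e_\ell$ and the identification $u_{j,J^-}-u_{j,J}=\D_{J,\ell}u_j$, is consistent with your declared convention, the argument stands.
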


\begin{proof}
Recall the definition $\D_{K,\ell}u_i=\D_{K,\sigma}u_i$ for $\sigma=K|L\in\mathcal{E} $ such that $x_L=x_K+\Delta x_\ell e_\ell$, where $e_\ell$ is the $\ell$th canonical unit vector of $\R^d$ (see \eqref{2.DKell}). We compute, using definition \eqref{2.Wij} of $W_{KJ}^{ij}$ and the periodic boundary conditions, 
\begin{align*}
  \D_{K,\ell} p_i &= \sum_{j=1}^n\sum_{J\in\mathcal{T}}\m(J)
  (W_{LJ}^{ij}-W_{KJ}^{ij})u_{j,J} \\
  &= \sum_{j=1}^n\sum_{J\in\mathcal{T}}
  \bigg(\frac{1}{\m(L)}\int_L\int_J W_{ij}(x-y)\dd y\dd x
  - \frac{1}{\m(K)}\int_K\int_J W_{ij}(x-y)\dd y\dd x\bigg)u_{j,J} \\
  &= \sum_{j=1}^n\frac{1}{\m(K)}
  \int_K\int_{\T^d}\big(W_{ij}(x+\Delta x_\ell e_\ell-y)
  - W_{ij}(x-y)\big)\pi_\delta u_j(y)\dd y\dd x \\
  &= \sum_{j=1}^n\int_{\T^d}\frac{1}{\m(K)}\int_K W_{ij}(x-y)\big(
  \pi_\delta u_j(y+\Delta x_\ell e_\ell) - \pi_\delta u_j(y)
  \big)\dd y\dd x \\
  &= \sum_{j=1}^n\sum_{J\in\mathcal{T}}\m(J)W_{KJ}^{ij}
  \D_{J,\ell}u_{j}.
\end{align*}
This finishes the proof.
\end{proof}

We claim that the discrete Fisher information is uniformly bounded.

\begin{lemma}[Discrete gradient bound for the fully implicit scheme]\label{lem.fisher1}
Let $p_i^k$ be given by \eqref{3.repp}. Then there exists $C(u^0)>0$ depending on the initial data such that
\begin{align*}
  \kappa\sum_{k=1}^{N}\Delta t\sum_{i=1}^n\sum_{\sigma\in\mathcal{E}}
  \tau_\sigma|\D_\sigma(u_i^k)^{1/2}|^2 \le C(u^0).
\end{align*}
\end{lemma}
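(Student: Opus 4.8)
The plan is to combine the two discrete entropy inequalities from Theorem~\ref{thm.ex} with the Fisher information estimate of Lemma~\ref{lem.fisher} so that the troublesome cross term $X(u^k,p^k)$ cancels and the degenerate production term $P_B$ is absorbed. Concretely, multiply \eqref{2.HB} by a suitable positive weight, multiply \eqref{2.HR} by $1/\kappa$ (to make the $\kappa X$ in \eqref{2.HR} match the $X$ in \eqref{2.HB} after scaling), and add; the right-hand sides then combine to eliminate $X$. What remains on the left is a combination of $P_B$ and $(1-\alpha)P_R/\kappa$ together with the telescoping entropy differences. Now invoke Lemma~\ref{lem.fisher} in the form
\begin{align*}
  \kappa(1-\alpha)\sum_{i=1}^n\sum_{\sigma\in\mathcal{E}}
  \tau_\sigma|\D_\sigma(u_i^k)^{1/2}|^2
  + \alpha X(u^k,p^k)
  \le \tfrac14 P_B(u^k,p^k) + \tfrac{\alpha}{\kappa}P_R(u^k,p^k),
\end{align*}
so that $\tfrac14 P_B + \tfrac{\alpha}{\kappa}P_R$ controls the Fisher information plus $\alpha X$. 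The bookkeeping should be arranged so that exactly the linear combination appearing here is what survives; this is the same computation that produced \eqref{discrete entropy intro} in the introduction, and one expects to land on
\begin{align*}
  \frac{1-\alpha}{\Delta t}\big(H_B(u^k)-H_B(u^{k-1})\big)
  + \frac{\alpha}{\kappa\Delta t}\big(H_R(u^k)-H_R(u^{k-1})\big)
  + \kappa(1-\alpha)^2\sum_{i=1}^n\sum_{\sigma\in\mathcal{E}}
  \tau_\sigma|\D_\sigma(u_i^k)^{1/2}|^2 \le 0.
\end{align*}

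The second step is to sum this inequality over $k=1,\ldots,N$. The two entropy terms telescope, leaving $(1-\alpha)(H_B(u^N)-H_B(u^0))/\Delta t$ and the analogous Rao difference, so after multiplying through by $\Delta t$ we obtain
\begin{align*}
  \kappa(1-\alpha)^2\sum_{k=1}^N\Delta t\sum_{i=1}^n
  \sum_{\sigma\in\mathcal{E}}\tau_\sigma|\D_\sigma(u_i^k)^{1/2}|^2
  \le (1-\alpha)\big(H_B(u^0)-H_B(u^N)\big)
  + \frac{\alpha}{\kappa}\big(H_R(u^0)-H_R(u^N)\big).
\end{align*}
Since $1-\alpha>0$, it suffices to bound the right-hand side by a constant depending only on the initial data. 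For this I need lower bounds for $H_B(u^N)$ and $H_R(u^N)$ and upper bounds for $H_B(u^0)$, $H_R(u^0)$. The Boltzmann entropy $H_B$ is bounded below because $s(\log s-1)\ge -1$ for $s>0$, giving $H_B(u^N)\ge -n|\T^d|$; its value at the initial datum is finite by (H1) together with Jensen's inequality applied to the cell averages. The key point for the Rao entropy uses positive semidefiniteness: by \eqref{2.Wpos}, $H_R(u^k)\ge 0$ for all $k$ because $H_R$ is exactly one half of the quadratic form in \eqref{2.Wpos} evaluated at $v_i=u_i^k$; so $-H_R(u^N)\le 0$, and $H_R(u^0)$ is finite since $u_i^0\in L^2(\T^d)$ and the kernels are integrable (so the discrete kernels $W_{KJ}^{ij}$ are uniformly controlled, or one simply uses \eqref{2.Wpos} in the other direction via boundedness of the continuous quadratic form on $L^2$). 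Collecting these estimates yields the claimed bound with
\begin{align*}
  C(u^0) = \frac{1}{(1-\alpha)}\Big((1-\alpha)\big(H_B(u^0)+n|\T^d|\big)
  + \frac{\alpha}{\kappa}H_R(u^0)\Big).
\end{align*}

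One technical wrinkle: Lemma~\ref{lem.fisher} does not by itself use Lemma~\ref{lem.diff}, but the fully implicit choice \eqref{3.repp} of $p_i^k$ is what makes the Rao inequality \eqref{2.HR} hold as stated (through the convexity/semidefiniteness argument in the proof of Theorem~\ref{thm.ex}), so I must make sure I am invoking \eqref{2.HR} in the regime where $p^k$ is defined by \eqref{3.repp}; this is exactly the hypothesis of the lemma. The main obstacle I anticipate is purely the algebraic bookkeeping in the first step: one must pick the weights on \eqref{2.HB} and \eqref{2.HR} and the split $\tfrac14 P_B+\tfrac{\alpha}{\kappa}P_R$ from Lemma~\ref{lem.fisher} so that \emph{all} of $P_B$, $P_R$, and $X$ cancel or become favorably signed simultaneously, leaving only the Fisher information with a strictly positive prefactor $\kappa(1-\alpha)^2$ and the telescoping entropies. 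Getting the constant $(1-\alpha)^2$ rather than $(1-\alpha)$ is the reason one cannot simply add \eqref{2.HB} and \eqref{2.HR}; the extra factor comes from how much of $P_B$ Lemma~\ref{lem.fisher} is allowed to consume. Once the per-step inequality \eqref{discrete entropy intro} is in hand, the summation and the entropy sign estimates are routine.
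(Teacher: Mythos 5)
There is a genuine gap at the heart of your first step: the cross term $X(u^k,p^k)$ does \emph{not} cancel algebraically, and your proposal never controls it. With the weights $(1-\alpha)$ on \eqref{2.HB} and $\alpha/\kappa$ on \eqref{2.HR}, the right-hand sides combine to $-(1-\alpha)X-\alpha X=-X$, not to zero; and when you then invoke Lemma \ref{lem.fisher} to convert $\tfrac14P_B+\tfrac{\alpha}{\kappa}P_R$ into Fisher information, the lemma brings along a \emph{further} term $-\alpha(1-\alpha)X$ of the same sign, so what survives is
\begin{align*}
  \frac{1-\alpha}{\Delta t}\big(H_B(u^k)-H_B(u^{k-1})\big)
  + \frac{\alpha}{\kappa\Delta t}\big(H_R(u^k)-H_R(u^{k-1})\big)
  \le -\kappa(1-\alpha)^2\sum_{i,\sigma}\tau_\sigma|\D_\sigma(u_i^k)^{1/2}|^2
  - \big(\alpha(1-\alpha)+1\big)X(u^k,p^k).
\end{align*}
No choice of positive weights can make the coefficient of $X$ vanish (it is always $-(\lambda+\mu\kappa)$ from the entropy inequalities plus a nonpositive contribution from Lemma \ref{lem.fisher}), so reaching \eqref{discrete entropy intro} requires the additional structural fact $X(u^k,p^k)\ge 0$. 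This is exactly where the fully implicit definition \eqref{3.repp} and Hypothesis (H5) enter: one uses Lemma \ref{lem.diff} (the discrete analog of $\na(W*u)=W*\na u$) to rewrite $X(u^k,p^k)=\tfrac12\sum_{|\ell|,i,j}\sum_{K,J}\frac{\m(K)}{\Delta x_\ell}\frac{\m(J)}{\Delta x_\ell}W_{KJ}^{ij}\D_{K,\ell}u_i^k\D_{J,\ell}u_j^k$, which is nonnegative by \eqref{2.Wpos}. Your write-up explicitly asserts the opposite of what is needed here — you state that Lemma \ref{lem.diff} is not used and that the role of \eqref{3.repp} is only to make \eqref{2.HR} valid, and you treat the disappearance of $X$ as ``bookkeeping'' — so the decisive step of the paper's proof is missing, and it is not a formality: for the mid-point potential \eqref{3.attp} the sign of $X$ fails and the paper must instead bound it via Lemma \ref{lem.estC} under a smallness condition.

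The remainder of your argument (summation over $k$, telescoping, $H_B\ge -n\,\m(\T^d)$ from $s(\log s-1)\ge-1$, $H_R(u^N)\ge0$ from \eqref{2.Wpos}, finiteness of $H_B(u^0)$ and $H_R(u^0)$ from (H1) and $W_{ij}\in L^1$) is correct and matches the paper, and is in fact slightly more careful than the paper's ``$H_B(u^N)\ge 0$''. But as written, the proof is incomplete until you insert the nonnegativity of the cross term for the fully implicit scheme.
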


\begin{proof}
We multiply the discrete Boltzmann entropy inequality \eqref{2.HB} by $1-\alpha$ and the discrete Rao entropy inequality \eqref{2.HR} by $\alpha/\kappa$ and add both inequalities:
\begin{align}\label{3.aux2}
  \frac{1-\alpha}{\Delta t}&\big(H_B(u^k)-H_B(u^{k-1})\big)
  + \frac{\alpha}{\kappa\Delta t}\big(H_R(u^k)-H_R(u^{k-1})\big) \\
  &\le -(1-\alpha)\bigg(P_B(u^k,p^k) 
  + \frac{\alpha}{\kappa}P_R(u^k,p^k)\bigg) - X(u^k,p^k). \nonumber 
\end{align}
Lemma \ref{lem.fisher} implies that
\begin{align*}
  P_B(u^k,p^k) + \frac{\alpha}{\kappa}P_R(u^k,p^k)
  &\ge \frac14 P_B(u^k,p^k) 
  + \frac{\alpha}{\kappa}P_R(u^k,p^k) \\
  &\ge \kappa(1-\alpha)\sum_{i=1}^n
  \sum_{\sigma\in\mathcal{E} }\tau_\sigma
  |\D_\sigma(u_i^k)^{1/2}|^2 + \alpha X(u^k,p^k).
\end{align*}
Inserting this estimate into \eqref{3.aux2} yields
\begin{align}\label{3.aux22}
  \frac{1-\alpha}{\Delta t}&\big(H_B(u^k)-H_B(u^{k-1})\big)
  + \frac{\alpha}{\kappa\Delta t}\big(H_R(u^k)-H_R(u^{k-1})\big) \\
  &\le -\kappa(1-\alpha)^2\sum_{i=1}^n
  \sum_{\sigma\in\mathcal{E} }\tau_\sigma
  |\D_\sigma(u_i^k)^{1/2}|^2 - \big(\alpha(1-\alpha)+1\big)X(u^k,p^k).
  \nonumber 
\end{align}
Observe that $\alpha(1-\alpha)+1 > 0$. We claim that also $X(u^k,p^k)$ is nonnegative. Indeed, we use the fact that the mesh is uniform and apply Lemma \ref{lem.diff}:
\begin{align*}
  X(u^k,p^k) &= \frac12\sum_{i,j=1}^n\sum_{K\in\mathcal{T}}
  \sum_{|\ell|=1}^d\frac{\m(K)}{(\Delta x_\ell)^2}
  \D_{K,\ell}p_i^k\D_{K,\ell}u_i^k \\
  &= \frac12\sum_{i,j=1}^n\sum_{|\ell|=1}^d\sum_{K,J\in\mathcal{T}}
  \frac{\m(K)}{\Delta x_\ell}\frac{\m(J)}{\Delta x_\ell}
  W_{KJ}^{ij}\D_{K,\ell}u_i^k\D_{J,\ell}u_j^k \ge 0,
\end{align*}
where the inequality follows from \eqref{2.Wpos}. Thus, summing \eqref{3.aux22} over $k=1,\ldots,N$,
\begin{align*}
  (1-\alpha)&(H_B(u^{N})-H_B(u^0))
  + \frac{\alpha}{\kappa}(H_R(u^{N})-H_R(u^0)) \\
  &+ \kappa(1-\alpha)^2\sum_{k=1}^N\Delta t\sum_{i=1}^n
  \sum_{\sigma\in\mathcal{E}}\tau_\sigma|\D_\sigma(u_i^k)^{1/2}|^2 
  \le 0.
\end{align*}
Since $H_B(u^{N})\ge 0$, $H_R(u^{N})\ge 0$, and $\alpha<1$, this finishes the proof.
\end{proof}


\subsection{Estimate for the mid-point scheme}

We show first an auxiliary estimate of the cross term.

\begin{lemma}\label{lem.estC} 
Let $u_i^k=(u_{i,K}^k)_{K\in\mathcal{T}}$ and let $p_i^k=(p_{i,K}^k)_{K\in\mathcal{T}}$ be defined by \eqref{3.attp}. Then
\begin{align*}
  X(u^k,p^k) \le c^*
  \sum_{i=1}^n\sum_{\sigma\in\mathcal{E}}
  \tau_\sigma\big(3|\D_\sigma(u_i^k)^{1/2}|^2
  + |\D_\sigma(u_i^{k-1})^{1/2}|^2\big),
\end{align*}
where $X(u^k,p^k)$ is defined in Theorem \ref{thm.ex} and $c^*>0$ is given by 
\begin{align}\label{3.cstar}
  c^*=\max_{j=1,\ldots,n}\sum_{i=1}^n \|W_{ij}\|_{L^\infty(\T^d)} \|u_i^0\|_{L^1(\T^d)}.
\end{align} 
\end{lemma}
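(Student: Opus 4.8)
The goal is to bound the cross term $X(u^k,p^k)$ by a multiple of the discrete Fisher information at times $k$ and $k-1$, where the constant $c^*$ is the product of kernel sup-norms with the initial $L^1$-masses. The plan is to mimic, at the discrete level, the estimate $-\sum_i\int\na u_i\cdot\na p_i\,\dd x\le 4\max_j\sum_i\|W_{ij}\|_{L^\infty}\|u_i\|_{L^1}\|\na\sqrt{u_i}\|_{L^1}^2$ recalled in the introduction, using the discrete convolution-differentiation rule of Lemma \ref{lem.diff} together with mass conservation from Theorem \ref{thm.ex}. I would start by writing $X(u^k,p^k)$ in the Cartesian-direction form used in the proof of Lemma \ref{lem.fisher1}: since the mesh is uniform, $\sum_{\sigma\in\mathcal{E}}\tau_\sigma(\cdots)=\tfrac12\sum_{K\in\mathcal{T}}\sum_{|\ell|=1}^d\tfrac{\m(K)}{(\Delta x_\ell)^2}(\cdots)$, so that $X(u^k,p^k)=\tfrac12\sum_{i=1}^n\sum_{K\in\mathcal{T}}\sum_{|\ell|=1}^d\tfrac{\m(K)}{(\Delta x_\ell)^2}\D_{K,\ell}p_i^k\,\D_{K,\ell}u_i^k$.

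Next I would apply Lemma \ref{lem.diff} to the mid-point potential \eqref{3.attp}, which gives $\D_{K,\ell}p_i^k=\sum_{j=1}^n\sum_{J\in\mathcal{T}}\m(J)W_{KJ}^{ij}\,\tfrac12\big(\D_{J,\ell}u_j^k+\D_{J,\ell}u_j^{k-1}\big)$. Substituting this in and bounding $|W_{KJ}^{ij}|\le\|W_{ij}\|_{L^\infty(\T^d)}$ (valid by definition \eqref{2.Wij}), I reduce to controlling terms of the shape $\sum_{K,J}\tfrac{\m(K)\m(J)}{(\Delta x_\ell)^2}\|W_{ij}\|_{L^\infty}\,|\D_{K,\ell}u_i^k|\,|\D_{J,\ell}u_j^m|$ for $m\in\{k,k-1\}$. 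Here the key trick is the discrete chain rule $\D_{K,\ell}u_i^k=(\,(u_{i,K}^k)^{1/2}+(u_{i,L}^k)^{1/2})\,\D_{K,\ell}(u_i^k)^{1/2}$, so $|\D_{K,\ell}u_i^k|\le 2(\bar u_{i,\sigma}^k)^{1/2}|\D_{K,\ell}(u_i^k)^{1/2}|$; then a Cauchy--Schwarz/Young split of the form $ab\le\tfrac12 a^2+\tfrac12 b^2$ decouples the $i$- and $j$-sums. The "leftover" factor after one application is a weighted sum $\sum_K \m(K)\,\bar u_{i,\sigma}^k=\sum_K\m(K)\,u_{i,K}^k+(\text{lower order})$, which equals $\|u_i^k\|_{L^1}=\|u_i^0\|_{L^1}$ by mass conservation — this is exactly where the factor $\|u_i^0\|_{L^1(\T^d)}$ and hence $c^*$ enters. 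Collecting the diffusive factors $\sum_K\tfrac{\m(K)}{(\Delta x_\ell)^2}|\D_{K,\ell}(u_i^m)^{1/2}|^2$ over $\ell$ reassembles $\sum_\sigma\tau_\sigma|\D_\sigma(u_i^m)^{1/2}|^2$.

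The main bookkeeping obstacle will be tracking the asymmetry between the two time levels: the $\D_{K,\ell}u_i^k$ coming from $u_i^k$ appears once (outside the convolution) while the averaged gradient $\tfrac12(\D u^k+\D u^{k-1})$ appears inside, so after the Young splitting one gets a weight $3$ on the time-$k$ Fisher information (two half-contributions from the averaged factor plus one from the outer factor, appropriately counted) and weight $1$ on the time-$(k-1)$ one, matching the statement; I would organize the $ab\le \tfrac12 a^2+\tfrac12 b^2$ applications carefully to land exactly on the coefficients $3$ and $1$. A secondary point to be careful about is that the power mean $\bar u_{i,\sigma}^k$ is not exactly $\tfrac12(u_{i,K}^k+u_{i,L}^k)$; but by convexity $\bar u_{i,\sigma}^k\le\tfrac12(u_{i,K}^k+u_{i,L}^k)$, so summing the edge quantity $\m(\sigma)\dd_\sigma\,\bar u_{i,\sigma}^k=\m(K)\bar u_{i,\sigma}^k$ over edges is still bounded by $\|u_i^k\|_{L^1}=\|u_i^0\|_{L^1}$, which is all that is needed. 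Finally, taking the maximum over $j$ of $\sum_i\|W_{ij}\|_{L^\infty}\|u_i^0\|_{L^1}$ turns the double sum over $i,j$ into the single constant $c^*$ in \eqref{3.cstar}, completing the estimate.
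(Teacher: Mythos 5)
Your plan follows essentially the same route as the paper's proof: rewrite $X(u^k,p^k)$ in directional form using Lemma \ref{lem.diff} applied to the mid-point potential, use the discrete chain rule, separate the mass factor $\sum_K\m(K)\bar u^k_{i,\sigma}\le\|u_i^k\|_{0,1,\mathcal{T}}=\|u_i^0\|_{L^1(\T^d)}$ from the Fisher factor by Cauchy--Schwarz, decouple the species and the two time levels by Young's inequality, and symmetrize over edges to arrive at the weights $3$ and $1$ and the constant $c^*$. The only (cosmetic) difference is that you bound $|W_{KJ}^{ij}|\le\|W_{ij}\|_{L^\infty(\T^d)}$ at the outset, whereas the paper carries the discrete maxima $\max_{j,J}(W_{KJ}^{ij})^{1/2}$ through the Cauchy--Schwarz step and passes to the $L^\infty$ norm only at the end; taking absolute values early as you do is in fact the cleaner way to handle sign-changing (attractive) kernels.
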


\begin{proof}
Let $\sigma=K|J\in\mathcal{E} $ with $x_J=x_K+\Delta x_\ell e_\ell$. Since the mesh consists of hyper-rectangles, we have $\tau_\sigma\m(J) = \m(\sigma)^2$. Then, applying Young's inequality,
\begin{align*}
  X(u^k,p^k) 
  &= \frac12\sum_{|\ell|=1}^d\sum_{i,j=1}^n\sum_{K,J\in\mathcal{T}}
  \m(\sigma)^2W_{KJ}^{ij}\D_{K,\ell}u_i^k
  \frac{\D_{J,\ell}u_j^k+\D_{J,\ell}u_j^{k-1}}{2} \\
  &\le \frac18\sum_{|\ell|=1}^d\sum_{i,j=1}^n\sum_{K,J\in\mathcal{T}}
  \m(\sigma)^2W_{KJ}^{ij}\big(2|\D_{K,\ell}u_i^k|^2
  + |\D_{J,\ell}u_j^k|^2 + |\D_{J,\ell}u_j^{k-1}|^2\big).
\end{align*}
It follows from the symmetry $W_{KJ}^{ij}=W_{JK}^{ji}$ that
\begin{align}\label{3.aux3}
  X(u^k,p^k) &\le \frac38\sum_{|\ell|=1}^d\bigg(\sum_{i=1}^n
  \sum_{K\in\mathcal{T}}\m(\sigma)\max_{j,J}(W_{KJ}^{ij})^{1/2}
  |\D_{K,\ell}u_i^k|\bigg)^2 \\
  &\phantom{xx}+ \frac18\sum_{|\ell|=1}^d\bigg(\sum_{i=1}^n
  \sum_{K\in\mathcal{T}}\m(\sigma)\max_{j,J}(W_{KJ}^{ij})^{1/2}
  |\D_{K,\ell}u_i^{k-1}|\bigg)^2. \nonumber 
\end{align}
Definition \eqref{3.baru} of $\bar{u}_{i,\sigma}^k$ gives 
$\D_{K,\ell}u_i^k =  2(\bar{u}_{i,\sigma}^k)^{1/2} \D_{K,\ell}(u_i^k)^{1/2}$. Then the Cauchy--Schwarz inequality and the identity $\m(\sigma)\dd_\sigma=\m(K)$ show that
\begin{align*}
  \bigg(&\sum_{i=1}^n\sum_{K\in\mathcal{T}}
  \m(\sigma)\max_{j,J}(W_{KJ}^{ij})^{1/2}
  |\D_{K,\ell}u_i^k|\bigg)^2 \\
  &= 4\bigg(\sum_{i=1}^n\sum_{K\in\mathcal{T}}\bigg\{
  \m(\sigma)^{1/2}\dd_\sigma^{1/2}\max_{j,J}(W_{KJ}^{ij})^{1/2}
  (\bar{u}_{i,\sigma}^k)^{1/2}\bigg\}
  \bigg\{\frac{\m(\sigma)^{1/2}}{\dd_\sigma^{1/2}}
  |\D_{K,\ell}(u_i^k)^{1/2}|\bigg\}\bigg)^2 \\
  &\le 4\max_{j,J}\bigg(\sum_{i=1}^n\sum_{K\in\mathcal{T}}\m(K)
  W_{KJ}^{ij}\bar{u}_{i,\sigma}^k\bigg)
  \bigg(\sum_{i=1}^n\sum_{K\in\mathcal{T}}\tau_\sigma
  |\D_{K,\ell}(u_i^k)^{1/2}|^2\bigg) \\
  &\le 4\max_{j=1,\ldots,n}\sum_{i=1}^n\|W_{ij}\|_{L^\infty(\T^d)}
  \|\pi_\delta\bar{u}_i^k\|_{L^1(\T^d)}
  \sum_{i=1}^n\sum_{K\in\mathcal{T}}\tau_\sigma
  |\D_{K,\ell}(u_i^k)^{1/2}|^2 \\
  &\le 4c^*\sum_{i=1}^n\sum_{K\in\mathcal{T}}\tau_\sigma
  |\D_{K,\ell}(u_i^k)^{1/2}|^2,
\end{align*}
where we used mass conservation and definition \eqref{3.cstar} of $c^*$ in the last step. Similarly,
\begin{align*}
  \bigg(\sum_{i=1}^n
  \sum_{K\in\mathcal{T}}\m(\sigma)\max_{j,J}(W_{KJ}^{ij})^{1/2}
  |\D_{K,\ell}u_i^{k-1}|\bigg)^2
  \le 4c^*\sum_{i=1}^n\sum_{K\in\mathcal{K}}\tau_\sigma
  |\D_{K,\ell}(u_i^{k-1})^{1/2}|^2.
\end{align*}
Inserting the previous two estimates into \eqref{3.aux3} and applying a symmetrization argument prove the lemma.
\end{proof}

Now, we show the desired discrete gradient estimate.

\begin{lemma}[Discrete gradient bound for the mid-point scheme]
\label{lem.fisher2}
Under the assumptions of Theorem \ref{thm.att}, there exists $C(u^0)>0$ depending on the initial data (and $\kappa$) such that
\begin{align*}
  \sum_{k=1}^{N}\Delta t\sum_{i=1}^n\sum_{\sigma\in\mathcal{E}}
  \tau_\sigma|\D_\sigma(u_i^k)^{1/2}|^2 \le C(u^0).
\end{align*}
\end{lemma}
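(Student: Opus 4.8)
The plan is to mimic the proof of Lemma~\ref{lem.fisher1}, the only difference being that the cross term $X(u^k,p^k)$, which was nonnegative there thanks to (H5), must now be absorbed by the Fisher information through Lemma~\ref{lem.estC}. Throughout, write $F^k:=\sum_{i=1}^n\sum_{\sigma\in\mathcal{E}}\tau_\sigma|\D_\sigma(u_i^k)^{1/2}|^2$ and $\beta:=\alpha(1-\alpha)+1$. First I would observe that the discrete entropy inequalities \eqref{2.HB}--\eqref{2.HR} are at our disposal in the present setting: inspecting the proof of Theorem~\ref{thm.ex}, Hypothesis (H5) is used only to make the quadratic remainder in $I_3$ nonnegative for the fully implicit potential \eqref{3.repp}, whereas for the mid-point potential \eqref{3.attp} used here that remainder vanishes identically by the symmetry (H4). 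Hence, exactly as in the derivation of \eqref{3.aux22}, multiplying \eqref{2.HB} by $1-\alpha$, \eqref{2.HR} by $\alpha/\kappa$, adding, and inserting Lemma~\ref{lem.fisher} yields
\[
  \frac{1-\alpha}{\Delta t}\big(H_B(u^k)-H_B(u^{k-1})\big)
  +\frac{\alpha}{\kappa\Delta t}\big(H_R(u^k)-H_R(u^{k-1})\big)
  +\kappa(1-\alpha)^2F^k \le -\beta\,X(u^k,p^k).
\]

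Since $X(u^k,p^k)$ is no longer sign-definite, the next step is to control it by the Fisher information. Lemma~\ref{lem.estC} gives $X(u^k,p^k)\le c^*(3F^k+F^{k-1})$ with $c^*$ as in \eqref{3.cstar}; applying the same lemma to the kernels $-W_{ij}$, which still satisfy (H4) and (H6) and leave $c^*$ unchanged, provides the matching lower bound, so that $|X(u^k,p^k)|\le c^*(3F^k+F^{k-1})$. Inserting this and using the smallness hypothesis of Theorem~\ref{thm.att} in the form $\beta c^*\le\frac14\kappa(1-\alpha)^2$, I would move $3\beta c^*F^k$ to the left to obtain the recursion
\[
  \frac{1-\alpha}{\Delta t}\big(H_B(u^k)-H_B(u^{k-1})\big)
  +\frac{\alpha}{\kappa\Delta t}\big(H_R(u^k)-H_R(u^{k-1})\big)
  +\big(\kappa(1-\alpha)^2-3\beta c^*\big)F^k \le \beta c^*\,F^{k-1},
\]
in which the coefficient of $F^k$ is at least $\frac14\kappa(1-\alpha)^2>0$ and that of $F^{k-1}$ is at most $\frac14\kappa(1-\alpha)^2$.

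Then I would multiply by $\Delta t$, sum over $k=1,\dots,m$ for arbitrary $m\le N$, and telescope the entropies; shifting the index in $\sum_{k=1}^{m}F^{k-1}$ leaves on the left-hand side the quantity $\big(\kappa(1-\alpha)^2-3\beta c^*\big)\Delta t\,F^m+\big(\kappa(1-\alpha)^2-4\beta c^*\big)\Delta t\sum_{k=1}^{m-1}F^k$, whose coefficients are nonnegative by the smallness condition (the first strictly positive). It then remains to bound the right-hand side mesh-independently: $H_B(u^m)\ge-n$ from $s(\log s-1)\ge-1$ and $\sum_K\m(K)=1$; $H_B(u^0)$ is bounded above by Jensen's inequality and is finite since $u_i^0\in L^2(\T^d)$; and $|H_R(u^m)|,|H_R(u^0)|\le\frac12\sum_{i,j}\|W_{ij}\|_{L^\infty(\T^d)}\|u_i^0\|_{L^1(\T^d)}\|u_j^0\|_{L^1(\T^d)}$ by (H6) and discrete mass conservation. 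Finally the term $\Delta t\,F^0$ is harmless: under \eqref{assume: derivative of initial data} it is bounded by $T$ times the constant appearing there, while under the parabolic scaling $\Delta t\le Ch^2$ one uses $|\D_\sigma(u_i^0)^{1/2}|^2\le u_{i,K}^0+u_{i,L}^0$ together with $\m(\sigma)\dd_\sigma=\m(K)$ and mass conservation of the initial data to get $F^0\le Ch^{-2}\sum_i\|u_i^0\|_{L^1(\T^d)}$, hence $\Delta t\,F^0\le C\sum_i\|u_i^0\|_{L^1(\T^d)}$. Taking $m=N$ and rearranging then gives $\sum_{k=1}^N\Delta t\,F^k\le C(u^0)$.

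The main obstacle is precisely the loss of the sign of $X(u^k,p^k)$: without positive semidefiniteness, the cross term can be paid for only with the Fisher information itself, the single lever being the constant $c^*$, which is small exactly because the initial mass is small. The delicate bookkeeping is that three copies of the Fisher information are in play --- the genuine dissipation $\kappa(1-\alpha)^2F^k$ manufactured from the Boltzmann and Rao inequalities via Lemma~\ref{lem.fisher}, and the two terms $3\beta c^*F^k$ and $\beta c^*F^{k-1}$ produced by Lemma~\ref{lem.estC} --- and the threshold $\beta c^*\le\frac14\kappa(1-\alpha)^2$ is just what is needed so that, after summation and telescoping, a nonnegative multiple of $\sum_k\Delta t\,F^k$ survives on the left-hand side; making sure the boundary term $F^0$ does not spoil this (whence the two alternative hypotheses) is the remaining technical point.
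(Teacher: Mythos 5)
Your proof is correct and follows essentially the same route as the paper: combine the weighted Boltzmann and Rao entropy inequalities with Lemma~\ref{lem.fisher} to obtain \eqref{3.aux22} (noting, as you do, that (H5) is not needed for the mid-point potential), absorb the cross term via Lemma~\ref{lem.estC} using the smallness of $c^*$, telescope the entropies, and handle the boundary term $\Delta t\,F^0$ either through \eqref{assume: derivative of initial data} or the parabolic scaling $\Delta t\le Ch^2$. Your additional care with the sign of $X(u^k,p^k)$ (the two-sided bound obtained by applying Lemma~\ref{lem.estC} to $\pm W_{ij}$) and the explicit lower bound $H_B\ge -n$ only tightens steps the paper treats loosely; note that, exactly as in the paper, the final absorption requires the strict inequality $4\beta c^*<\kappa(1-\alpha)^2$, since at equality the coefficient of $\sum_{k\le N-1}\Delta t\,F^k$ vanishes.
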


\begin{proof}
Set $\beta=\alpha(1-\alpha)+1$. We sum estimate \eqref{3.aux22} over $k=1,\ldots,N$ and replace $X(u^k,p^k)$ by the estimate in Lemma \ref{lem.estC}:
\begin{align}\label{3.aux4}
  (1-\alpha)&\big(H_B(u^{N})-H_B(u^0)\big)
  + \frac{\alpha}{\kappa}\big(H_R(u^{N})-H_R(u^0)\big) \\
  &\le -\kappa(1-\alpha)^2\sum_{k=1}^{N}\Delta t
  \sum_{i=1}^n\sum_{\sigma\in\mathcal{E}}
  \tau_\sigma|\D_\sigma(u_i^k)^{1/2}|^2 
  - \beta\sum_{k=1}^{N}\Delta t\sum_{k=1}^{N}X(u^k,p^k) \nonumber \\
  &\le -\bigg(\kappa(1-\alpha)^2 - 4\beta c^*\bigg)
  \sum_{k=1}^{N}\Delta t\sum_{i=1}^n\sum_{\sigma\in\mathcal{E}}
  \tau_\sigma|\D_\sigma(u_i^k)^{1/2}|^2 \nonumber \\
  &\phantom{xx}+ 
  \beta c^*\Delta t\sum_{i=1}^n\sum_{\sigma\in\mathcal{E}}
  \tau_\sigma|\D_\sigma(u_i^0)^{1/2}|^2.
  \nonumber 
\end{align}
We bound the Rao entropy $H_R(u^{N})$, defined in \eqref{2.HBHR}, as follows:
\begin{align*}
  |H_R(u^{N})| &\le \max_{i,j=1,\ldots,n}\|W_{ij}\|_{L^\infty(\T^d)}
  \bigg(\sum_{i=1}^n\sum_{K\in\mathcal{T}}\m(K)u_{i,K}^{N}\bigg)^2 \\
  &= \max_{i,j=1,\ldots,n}\|W_{ij}\|_{L^\infty(\T^d)}
  \bigg(\sum_{i=1}^n\|u_i^0\|_{L^1(\T^d)}\bigg)^2 \le C(u^0).
\end{align*}
If the initial data satisfy \eqref{assume: derivative of initial data}, we conclude from \eqref{3.aux4} and the nonnegativity of $H_B(u^{N})$ that
\begin{align*}
  \big(\kappa(1-\alpha)^2-4\beta c^*\big)\sum_{k=1}^{N}\Delta t
  \sum_{i=1}^n\sum_{\sigma\in\mathcal{E}}\tau_\sigma
  |\D_\sigma(u_i^k)^{1/2}|^2 \le C(u^0).
\end{align*}
It holds that $\kappa(1-\alpha)^2-4\beta c^*>0$ if $c^*<\kappa(1-\alpha)^2/(4\beta)$. This proves the claim under condition \eqref{assume: derivative of initial data}.

If assumption \eqref{assume: derivative of initial data} is not satisfied, we can bound the last term in \eqref{3.aux4} as follows. Because of $\m(\sigma)=\m(K)/\dd_\sigma$ and $\dd_\sigma\ge Ch$, we have
\begin{align*}
  \sum_{\sigma\in\mathcal{E}}\tau_\sigma|\D_\sigma(u_i^0)^{1/2}|^2
  &= \sum_{K\in\mathcal{T}}\sum_{\sigma=K|L\in\mathcal{E}_K}
  \frac{\m(K)}{\dd_\sigma^2}
  \big|(u_{i,L}^0)^{1/2}-(u_{i,K}^0)^{1/2}\big|^2 \\
  &\le \frac{C}{h^2}\sum_{K\in\mathcal{T}}\m(K)u_{i,K}^0
  = \frac{C}{h^2}\|u_i^0\|_{L^1(\T^d)}.
\end{align*}
If $\Delta t\le Ch^2$, we obtain
\begin{align*}
  \Delta t\sum_{\sigma\in\mathcal{E}}
  \tau_\sigma|\D_\sigma(u_i^0)^{1/2}|^2
  \le C(u^0).
\end{align*}
The proof is finished.
\end{proof}


\section{Uniform estimates, compactness, and convergence}\label{sec.est}

We prove further uniform estimates by leveraging the previously derived uniform bound on the Fisher information. We then apply a discrete compactness argument to deduce the convergence.

\subsection{Uniform estimates}

Let $u^k=(u_K^k)_{K\in\mathcal{T}}$ for $k=0,\ldots,N$ be a solution to scheme \eqref{2.euler}--\eqref{2.uhat} with the potential $p^k=(p_K^k)_{K\in\mathcal{T}}$ defined in \eqref{3.repp} or \eqref{3.attp}. Recall definition \eqref{2.delta} of the mesh size $\delta$. The mass conservation and the discrete gradient bound in Lemmas \ref{lem.fisher1} and \ref{lem.fisher2} give the following result.

\begin{lemma}\label{lem.r1}
Let $r_1=(d+2)/d$. Then there exists a constant $C>0$ independent of the mesh size $\delta$ such that for $i=1,\ldots,n$,
\begin{align}
  \max_{k=1,\ldots,N}\|u_i^k\|_{0,1,\mathcal{T}}
  + \sum_{k=1}^N\Delta t\|(u_i^k)^{1/2}\|_{1,2,\mathcal{T}}^2 
  &\le C, \label{5.est1} \\ 
  \sum_{k=1}^N\Delta t\big(\|u_i^k\|_{0,r_1,\mathcal{T}}^{r_1}
  + \|\widehat{u}_i^k\|_{0,r_1,\mathcal{T}}^{r_1}
  + \|\bar{u}_i^k\|_{0,r_1,\mathcal{T}}^{r_1}\big) &\le C. 
  \label{5.est2}
\end{align}
\end{lemma}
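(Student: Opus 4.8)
The plan is to derive \eqref{5.est1} directly from the uniform bounds already established, and then to obtain \eqref{5.est2} via a discrete Gagliardo--Nirenberg--Sobolev interpolation inequality. The first bound in \eqref{5.est1} is immediate: mass conservation (Theorem \ref{thm.ex}) gives $\|u_i^k\|_{0,1,\mathcal{T}}=\sum_{K}\m(K)u_{i,K}^k=\sum_{K}\m(K)u_{i,K}^0=\|u_i^0\|_{L^1(\T^d)}$ for every $k$, which bounds the first term uniformly. For the second term in \eqref{5.est1}, recall that $\|(u_i^k)^{1/2}\|_{1,2,\mathcal{T}}^2=\|(u_i^k)^{1/2}\|_{0,2,\mathcal{T}}^2+\|\na^h(u_i^k)^{1/2}\|_{0,2,\mathcal{T}^*}^2$. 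The zeroth-order part equals $\sum_{K}\m(K)u_{i,K}^k=\|u_i^0\|_{L^1(\T^d)}$ by mass conservation, so $\sum_k\Delta t\,\|(u_i^k)^{1/2}\|_{0,2,\mathcal{T}}^2=T\|u_i^0\|_{L^1(\T^d)}$. The gradient part, using \eqref{2.Deltasig} and \eqref{2.nahv}, reduces (up to the dimensional constant $d$) to $\sum_{\sigma\in\mathcal{E}}\tau_\sigma|\D_\sigma(u_i^k)^{1/2}|^2$, whose time-integral is bounded by $C(u^0)/\kappa$ (in the repulsive case, Lemma \ref{lem.fisher1}) or by $C(u^0)$ (in the attractive case, Lemma \ref{lem.fisher2}). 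This proves \eqref{5.est1}.

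For \eqref{5.est2}, the idea is to apply a discrete Gagliardo--Nirenberg--Sobolev inequality to $v=(u_i^k)^{1/2}$. On the $d$-torus one has, for the exponent $2^*=2d/(d-2)$ (and for all finite exponents if $d\le 2$), a bound of the form $\|v\|_{0,2^*,\mathcal{T}}\le C\|v\|_{1,2,\mathcal{T}}$ with $C$ independent of the mesh; such discrete Sobolev embeddings for Cartesian finite-volume meshes are standard (e.g.\ in the spirit of \cite{CCGJ18}). Interpolating with the $L^2$ control of $v$ (equivalently $L^1$ control of $u_i^k$), one gets $\|v\|_{0,q,\mathcal{T}}\le C\|v\|_{1,2,\mathcal{T}}^{\theta}\|v\|_{0,2,\mathcal{T}}^{1-\theta}$ for intermediate $q$. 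Choosing $q=2r_1=2(d+2)/d$ and the corresponding $\theta=d/(d+2)$ yields, after squaring, $\|u_i^k\|_{0,r_1,\mathcal{T}}^{r_1}=\|v\|_{0,2r_1,\mathcal{T}}^{2r_1}\le C\|v\|_{1,2,\mathcal{T}}^{2}\,\|v\|_{0,2,\mathcal{T}}^{2(r_1-1)}$; since $\|v\|_{0,2,\mathcal{T}}^2=\|u_i^0\|_{L^1(\T^d)}$ is a fixed constant, summing over $k$ with the weight $\Delta t$ and invoking \eqref{5.est1} gives $\sum_k\Delta t\,\|u_i^k\|_{0,r_1,\mathcal{T}}^{r_1}\le C$. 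The point of the exponent $r_1=(d+2)/d$ is precisely that it makes the Gagliardo--Nirenberg weight on the gradient term equal to $2$, so that the uniform $L^2_t$-in-time Fisher bound is exactly what is needed.

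It remains to handle $\|\widehat{u}_i^k\|_{0,r_1,\mathcal{T}}$ and $\|\bar{u}_i^k\|_{0,r_1,\mathcal{T}}$. These are edge-based quantities, but each of $\widehat{u}_{i,\sigma}^k$ and $\bar{u}_{i,\sigma}^k$ is, by \eqref{2.uhat} and \eqref{3.baru}, a convex combination (respectively, a specific mean) of the two adjacent cell values $u_{i,K}^k$ and $u_{i,L}^k$. Hence $|\widehat{u}_{i,\sigma}^k|^{r_1}\le\max\{u_{i,K}^k,u_{i,L}^k\}^{r_1}\le u_{i,K}^{k\,r_1}+u_{i,L}^{k\,r_1}$ and, by convexity of $s\mapsto s^{r_1}$ applied to $\bar u_{i,\sigma}^k=(\tfrac12 (u_{i,K}^k)^{1/2}+\tfrac12(u_{i,L}^k)^{1/2})^2$, also $|\bar u_{i,\sigma}^k|^{r_1}\le\tfrac12 u_{i,K}^{k\,r_1}+\tfrac12 u_{i,L}^{k\,r_1}$. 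Using $\m(\Delta_\sigma)=\m(\sigma)\dd_\sigma$ together with $\m(\sigma)\dd_\sigma=\m(K)$ for $\sigma\in\mathcal{E}_K$ on the uniform mesh, summing over $\sigma\in\mathcal{E}$ converts these edge sums into cell sums comparable (with a constant depending only on $d$, namely the number $2d$ of faces per cell) to $\sum_K\m(K)u_{i,K}^{k\,r_1}=\|u_i^k\|_{0,r_1,\mathcal{T}}^{r_1}$. Therefore $\|\widehat{u}_i^k\|_{0,r_1,\mathcal{T}}^{r_1}$ and $\|\bar{u}_i^k\|_{0,r_1,\mathcal{T}}^{r_1}$ are controlled by $C\|u_i^k\|_{0,r_1,\mathcal{T}}^{r_1}$, and \eqref{5.est2} follows from the bound just established for the cell quantity. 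The main obstacle, and the only nonroutine ingredient, is the discrete Gagliardo--Nirenberg--Sobolev inequality with a mesh-independent constant on the torus; one must make sure the correct form and exponents are invoked (or cite the appropriate reference), while the remaining manipulations --- mass conservation, the chain-rule identity \eqref{3.chain}, and the convexity bounds for the edge reconstructions --- are elementary.
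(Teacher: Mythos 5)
Your argument is correct and follows the paper's proof essentially verbatim: mass conservation together with the Fisher-information bounds of Lemmas \ref{lem.fisher1} and \ref{lem.fisher2} yields \eqref{5.est1}, the discrete Gagliardo--Nirenberg inequality (the paper invokes \cite[Theorem 3.4]{BCF15} with exactly your exponents $\theta=d/(d+2)$, $q=2r_1$) applied to $(u_i^k)^{1/2}$ gives the cell bound in \eqref{5.est2}, and the edge quantities $\widehat{u}_i^k$, $\bar{u}_i^k$ are reduced to cell sums just as in the paper. The only blemish is the harmless slip $\m(\Delta_\sigma)=\m(\sigma)\dd_\sigma$, which by \eqref{2.Deltasig} should read $d\,\m(\Delta_\sigma)=\m(\sigma)\dd_\sigma$; this affects only a dimensional constant and not the conclusion.
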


\begin{proof}
Estimate \eqref{5.est1} immediately follows from mass conservation and 
Lemmas \ref{lem.fisher1} and \ref{lem.fisher2}. We claim that estimate \eqref{5.est2} is a consequence of the discrete Gagliardo--Nirenberg inequality \cite[Theorem 3.4]{BCF15}. Indeed, starting from the inequality
\begin{align*}
  \|(u_i^k)^{1/2}\|_{0,2r_1,\mathcal{T}}
  \le C\|(u_i^k)^{1/2}\|_{1,2,\mathcal{T}}^{d/(d+2)}
  \|(u_i^k)^{1/2}\|_{0,2,\mathcal{T}}^{2/(d+2)}
\end{align*}
we sum over $k=1,\ldots,N$ yielding
\begin{align*}
  \sum_{k=1}^N\Delta t\|(u_i^k)^{1/2}\|_{0,2r_1,\mathcal{T}}^{2r_1}
  \le C\max_{k=1,\ldots,N}\|u_i^k\|_{0,1,\mathcal{T}}^{2/d}
  \sum_{k=1}^N\Delta t\|(u_i^k)^{1/2}\|_{1,2,\mathcal{T}}^2 \le C.
\end{align*}

To prove the bound for the upwind concentration $\widehat{u}_i^k$, we note first that for any $x\in\Delta_\sigma$ with $\sigma=K|L$ and $t\in(t_{k-1},t_k]$, we have $\pi_\delta^*(\widehat{u}_i)^{1/2}(t,x) \le (u_{i,K}^k)^{1/2} + (u_{i,L}^k)^{1/2}$. Then an integration leads to
\begin{align*}
  \int_0^T\int_{\T^d}|\pi_\delta^*(\widehat{u}_i)^{1/2}|^{2r_1}
  \dd x\dd t
  &\le \sum_{k=1}^N\Delta t\sum_{\sigma=K|L\in\mathcal{E}}
  \m(\Delta_\sigma)\big((u_{i,K}^k)^{1/2} 
  + (u_{i,L}^k)^{1/2}\big)^{2r_1} \\
  &\le C\sum_{k=1}^N\Delta t\sum_{K\in\mathcal{T}}\m(K)|u_{i,K}^k|^{r_1}
  \le C.
\end{align*}
A similar computation holds for $\bar{u}_i^k$. This proves the lemma.
\end{proof}

We need a further gradient estimate.

\begin{lemma}[Gradient bounds]\label{lem.r2}
Let $r_2=(d+2)/(d+1)$. Then there exists a constant $C>0$ independent of the mesh size $\delta$ such that for $i=1,\ldots,n$,
\begin{align*}
  \sum_{k=1}^N\Delta t\big(\|\na^h u_i^k\|_{0,r_2,\mathcal{T}}^{r_2}
  + \|\widehat{u}_i^k\na^h p_i^k\|_{0,r_2,\mathcal{T}^*}^{r_2}\big)\le C.
\end{align*}
\end{lemma}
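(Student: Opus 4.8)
The plan is to bound the two quantities separately, both by reduction to the uniform Fisher-information bound from Lemma~\ref{lem.r1}, combined with suitable discrete interpolation.

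\textbf{Step 1: the term $\|\na^h u_i^k\|_{0,r_2,\mathcal{T}}^{r_2}$.} Using the discrete chain rule \eqref{3.chain}, write $\D_{K,\sigma}u_i^k = 2(\bar u_{i,\sigma}^k)^{1/2}\D_{K,\sigma}(u_i^k)^{1/2}$, so that on each dual cell the discrete gradient factors as a product of $(\bar u_{i,\sigma}^k)^{1/2}$ and $\na_\sigma^h (u_i^k)^{1/2}$. I would apply the discrete Hölder inequality on $\mathcal{T}^*$ with exponents chosen so that $r_2 = (d+2)/(d+1)$ splits as $1/r_2 = 1/2 + 1/(2r_1)$ (here $r_1=(d+2)/d$, and one checks $1/2+d/(2(d+2)) = (d+1)/(d+2)$). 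This gives
\begin{align*}
  \|\na^h u_i^k\|_{0,r_2,\mathcal{T}^*}
  \le C\|(u_i^k)^{1/2}\|_{0,2r_1,\mathcal{T}}\,
  \|\na^h(u_i^k)^{1/2}\|_{0,2,\mathcal{T}^*}.
\end{align*}
Raising to the power $r_2$, summing over $k$, and using a further Hölder in time (with the exponent balance again chosen so the time integrals close), the right-hand side is controlled by $\big(\sum_k\Delta t\|(u_i^k)^{1/2}\|_{0,2r_1,\mathcal{T}}^{2r_1}\big)^{\theta}\big(\sum_k\Delta t\|(u_i^k)^{1/2}\|_{1,2,\mathcal{T}}^2\big)^{1-\theta}$, both of which are bounded by Lemma~\ref{lem.r1}. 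The key numerical coincidence making this work is that $r_2$ is the Hölder-conjugate-type exponent matching the parabolic scaling exponent $r_1$; I would verify the exponent arithmetic explicitly.

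\textbf{Step 2: the term $\|\widehat u_i^k\na^h p_i^k\|_{0,r_2,\mathcal{T}^*}^{r_2}$.} The crucial observation is the discrete differentiation rule of Lemma~\ref{lem.diff}: $\D_{K,\ell}p_i^k = \sum_{j,J}\m(J)W_{KJ}^{ij}\D_{J,\ell}u_j^k$ (in the mid-point case with the time-averaged $u$, which only changes constants). Hence $\|\na^h p_i^k\|_{0,r_2,\mathcal{T}^*}$ is a discrete convolution of the bounded kernels $W_{ij}$ with $\na^h u_j^k$; by (H6) and a discrete Young convolution inequality it is bounded by $C\sum_j\|\na^h u_j^k\|_{0,r_2,\mathcal{T}^*}$ — or, in the positive-semidefinite case (H5) without (H6), one instead uses $W_{ij}\in L^\infty$ which still holds on the torus, or argues via $\|W_{ij}*\na u_j\|_{L^{r_2}}\le\|W_{ij}\|_{L^1}\|\na u_j\|_{L^{r_2}}$. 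Then $\|\widehat u_i^k\na^h p_i^k\|_{0,r_2,\mathcal{T}^*}\le\|\widehat u_i^k\|_{0,q,\mathcal{T}^*}\|\na^h p_i^k\|_{0,r_2',\mathcal{T}^*}$ for an appropriate Hölder pair, and after summing in time and invoking Lemma~\ref{lem.r1} (which already includes the bound on $\|\widehat u_i^k\|_{0,r_1,\mathcal{T}}$) together with Step~1, the estimate follows. One has to be mindful that Lemma~\ref{lem.diff} as stated requires the fully implicit potential \eqref{3.repp}; for \eqref{3.attp} one applies it to each time slice $u^k$ and $u^{k-1}$ separately and uses the bound for both.

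\textbf{Main obstacle.} I expect the delicate point to be getting the exponents to close simultaneously in space and time: one needs $r_2$, $r_1$, and the time exponents to satisfy the right Hölder relations so that the products of norms from Lemma~\ref{lem.r1} — the $L^\infty_t L^1_x$ mass bound, the $L^2_t$ Fisher bound, and the $L^{r_1}_t L^{r_1}_x$ bound — combine with total exponents summing to $1$. This is exactly the discrete analog of the parabolic Gagliardo--Nirenberg bookkeeping, and while each ingredient (discrete Hölder on $\mathcal{T}^*$, discrete Young for the convolution, Lemma~\ref{lem.diff}) is routine, assembling them with the correct exponents is where care is required; a secondary technical nuisance is handling the mid-point potential \eqref{3.attp} uniformly with the fully implicit one \eqref{3.repp}.
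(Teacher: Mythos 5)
Your Step 1 is correct and is exactly the paper's argument: chain rule \eqref{3.chain}, discrete H\"older with $1/r_2=1/2+1/(2r_1)$, then H\"older in time, closed by Lemma \ref{lem.r1}.

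Step 2, however, has a genuine gap: the exponents do not close, and this is not just bookkeeping to be checked later. Via Lemma \ref{lem.diff} and Young's inequality you can indeed bound $\|\na^h p_i^k\|_{0,q,\mathcal{T}^*}\le C\sum_j\|\na^h u_j^k\|_{0,r_2,\mathcal{T}^*}$ for any $q\ge r_2$ (this is essentially Lemma \ref{lem.p}), but this information is only $L^{r_2}$ in time, since it inherits the time integrability of $\na^h u$. Pairing it with $\widehat u_i^k$, for which you only have $L^{r_1}(\Omega_T)$ and $L^\infty_t L^1_x$ (mass), gives the product merely in $L^s$ with
\begin{align*}
  \frac1s=\frac{1}{r_1}+\frac{1}{r_2}=\frac{2d+1}{d+2}>\frac{d+1}{d+2}=\frac{1}{r_2},
\end{align*}
i.e.\ $s=(d+2)/(2d+1)<r_2$; and no rearrangement of mixed-norm H\"older helps, because to keep the time exponent at $r_2$ you would need $\widehat u_i\in L^\infty_t L^{r_2}_x$, which is not available. (Also, your fallback ``$W_{ij}\in L^\infty$ still holds on the torus'' under (H5) is false; (H5) only gives $W_{ij}\in L^1$.) The missing ingredient is a \emph{weighted} gradient bound for $p$, namely the time-summed Rao entropy production
\begin{align*}
  \sum_{k=1}^N\Delta t\sum_{\sigma\in\mathcal{E}}\tau_\sigma\,\widehat u_{i,\sigma}^k\,|\D_\sigma p_i^k|^2\le C,
\end{align*}
which follows from \eqref{2.HR} (using $X(u^k,p^k)\ge0$ in the repulsive case, resp.\ the smallness argument of Lemma \ref{lem.fisher2} in the attractive case). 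With it, the paper writes $\widehat u_i^k\na^h p_i^k=(\widehat u_i^k)^{1/2}\cdot(\widehat u_i^k)^{1/2}\na^h p_i^k$ and applies H\"older with the same split $1/(2r_1)+1/2=1/r_2$ as in Step 1, the first factor being controlled by Lemma \ref{lem.r1} and the second by the dissipation bound above. In short: the drift term cannot be handled by the unweighted bounds alone; you must invoke the entropy-production (weighted) estimate, which your proposal never uses.
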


\begin{proof}
We use the chain rule \eqref{3.chain} and H\"older's inequality with $1/(2r_1)+1/2=1/r_2$ to estimate the diffusion term:
\begin{align*}
  \sum_{k=1}^N&\Delta t\|\na^h u_i^k\|_{0,r_2,\mathcal{T}^*}^{r_2}
  \le 2^{r_2}\sum_{k=1}^N\Delta t\|(\bar{u}_i^k)^{1/2}
  \na^h(u_i^k)^{1/2}\|_{0,r_2,\mathcal{T}^*}^{r_2} \\
  &\le C\sum_{k=1}^N\Delta t 
  \|(\bar{u}_i^k)^{1/2}\|_{0,2r_1,\mathcal{T}^*}^{r_2}
  \|\na^h(u_i^k)^{1/2}\|_{0,2,\mathcal{T}^*}^{r_2} \\
  &\le C\bigg(\sum_{k=1}^N\Delta t 
  \|(\bar{u}_i^k)^{1/2}\|_{0,2r_1,\mathcal{T}^*}^{2r_1}
  \bigg)^{r_2/(2r_1)}\bigg(\sum_{k=1}^N\Delta t\|\na^h(u_i^k)^{1/2}
  \|_{0,2,\mathcal{T}^*}^2\bigg)^{r_2/2} \le C,
\end{align*}
where the last step follows from Lemma \ref{lem.r1}. We estimate the drift term in a similar way:
\begin{align*}
  \sum_{k=1}^N\Delta t\|\widehat{u}_i^k\na^h p_i^k
  \|_{0,r_2,\mathcal{T}}^{r_2}
  &\le \bigg(\sum_{k=1}^N\Delta t\|(\widehat{u}_i^k)^{1/2}
  \|_{0,2r_1,\mathcal{T}^*}^{2r_1}\bigg)^{r_2/(2r_1)} \\
  &\times\bigg(\sum_{k=1}^N\Delta t\|(\widehat{u}_i^k)^{1/2}
  \na^h p_i^k\|_{0,r_2,\mathcal{T}^*}^2\bigg)^{r_2/2} \le C,
\end{align*}
again applying Lemma \ref{lem.r1} in the last step.
\end{proof}

\begin{lemma}[Bounds for the potential]\label{lem.p}
There exists a constant $C>0$ independent of the mesh size $\delta$ such that for $i=1,\ldots,n$,
\begin{align*}
  \sum_{k=1}^N\Delta t\big(\|p_i^k\|_{0,r_1,\mathcal{T}}^{r_1}
  + \|\na^h p_i^k\|_{0,r_2,\mathcal{T}^*}^{r_2}\big) \le C.
\end{align*}
\end{lemma}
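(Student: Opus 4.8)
The plan is to exploit that the discrete potential is, up to cell averaging, the convolution of the kernels $W_{ij}$ with the discrete densities, so that all required estimates for $p_i^k$ follow from the already-established estimates for the densities via Young's convolution inequality.

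First I would record the convolution identity behind the scheme. Combining definition \eqref{2.Wij} of $W_{KJ}^{ij}$ with \eqref{3.repp} (respectively \eqref{3.attp}) and arguing exactly as in the proof of Lemma \ref{lem.diff}, one obtains
\[
  p_{i,K}^k = \frac{1}{\m(K)}\int_K\sum_{j=1}^n\big(W_{ij}*\pi_\delta v_j^k\big)(x)\,\dd x,
  \qquad v_j^k = u_j^k \ \text{ or }\ v_j^k = \tfrac12\big(u_j^k+u_j^{k-1}\big),
\]
where $*$ is the convolution on $\T^d$; equivalently, $\pi_\delta p_i^k$ is the $L^2$-projection onto piecewise constant functions of $\sum_j W_{ij}*\pi_\delta v_j^k$. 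Since that projection contracts every $L^p(\T^d)$ norm (by Jensen's inequality) and Young's convolution inequality gives $\|W_{ij}*w\|_{L^{r_1}(\T^d)}\le\|W_{ij}\|_{L^1(\T^d)}\|w\|_{L^{r_1}(\T^d)}$ — here $W_{ij}\in L^1(\T^d)$ by Hypothesis (H4) — we get $\|p_i^k\|_{0,r_1,\mathcal{T}}\le\sum_{j}\|W_{ij}\|_{L^1(\T^d)}\|v_j^k\|_{0,r_1,\mathcal{T}}$. Raising to the power $r_1$, summing against $\Delta t$, and invoking \eqref{5.est2} of Lemma \ref{lem.r1} yields the bound on $\sum_k\Delta t\|p_i^k\|_{0,r_1,\mathcal{T}}^{r_1}$ for the fully implicit scheme at once; for the mid-point scheme, after the re-indexing $\sum_{k=1}^N\Delta t\|u_j^{k-1}\|_{0,r_1,\mathcal{T}}^{r_1}=\Delta t\|u_j^0\|_{0,r_1,\mathcal{T}}^{r_1}+\sum_{k=1}^{N-1}\Delta t\|u_j^k\|_{0,r_1,\mathcal{T}}^{r_1}$, the sum is again controlled by Lemma \ref{lem.r1} and the single initial term is bounded under the hypotheses of Theorem \ref{thm.att}, either directly via the discrete Gagliardo--Nirenberg inequality applied to $(u_j^0)^{1/2}$ when \eqref{assume: derivative of initial data} holds, or via an inverse estimate together with the parabolic scaling $\Delta t\le Ch^2$.

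For the gradient term I would invoke Lemma \ref{lem.diff} (which extends to the potential \eqref{3.attp} by linearity): $\D_{K,\ell}p_i^k=\sum_{j}\sum_{J\in\mathcal{T}}\m(J)W_{KJ}^{ij}\D_{J,\ell}v_j^k$. Dividing by $\Delta x_\ell$, the discrete gradient components of $p_i^k$ are again a discrete convolution of $W_{ij}$ with those of $v_j^k$, so the same Jensen-plus-Young estimate, now with exponent $r_2$ (and the fixed mesh constant relating $\D_{K,\ell}v/\Delta x_\ell$ to $\na^h_\sigma v$), gives $\|\na^h p_i^k\|_{0,r_2,\mathcal{T}^*}\le C\sum_{j}\|\na^h v_j^k\|_{0,r_2,\mathcal{T}^*}$. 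Taking the power $r_2$, summing against $\Delta t$, and applying Lemma \ref{lem.r2} — together with the same re-indexing and, for \eqref{3.attp}, a bound on $\Delta t\|\na^h u_j^0\|_{0,r_2,\mathcal{T}^*}^{r_2}$ obtained as above — completes the proof.

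I expect the only non-routine points to be the careful verification of the discrete convolution identity, so that the classical Young inequality applies verbatim, and, for the mid-point scheme, the control of the single initial contribution, which is precisely where the extra assumptions of Theorem \ref{thm.att} are used; everything else is a direct consequence of Young's inequality and Lemmas \ref{lem.r1} and \ref{lem.r2}.
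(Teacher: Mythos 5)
Your proposal is correct and follows essentially the same route as the paper: it exploits that $\pi_\delta p_i^k$ is the cell-averaged convolution of $W_{ij}$ with the piecewise constant densities (and, via Lemma \ref{lem.diff}, the same structure for the discrete gradient), applies Young's convolution inequality (the paper phrases this as a generalized Young inequality for the averaged kernel, you as Jensen for the projection plus classical Young, which is equivalent), and then invokes Lemmas \ref{lem.r1} and \ref{lem.r2}. Your explicit treatment of the $k=1$ initial-data contribution in the mid-point scheme, using either \eqref{assume: derivative of initial data} with the discrete Gagliardo--Nirenberg inequality or the inverse estimate with the parabolic scaling $\Delta t\le Ch^2$, is a correct elaboration of the step the paper dismisses as ``analogous.''
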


\begin{proof}
We prove the result only for the fully implicit scheme \eqref{3.repp}, as the mid-point scheme \eqref{3.attp} is shown in an analogous way. It follows from definition \eqref{2.Wij} of $W_{KJ}^{ij}$ that
\begin{align*}
  p_{i,K}^k = \sum_{j=1}^n\sum_{J\in\mathcal{T}}\m(J)W_{KJ}^{ij}
  u_{j,J}^k 
  = \sum_{j=1}^n\int_{\T^d}
  W_{K,y}^{ij}  \pi_\delta u_j^k(y)\dd y,
\end{align*}
where $W_{K,y}^{ij}:=\m(K)^{-1}\int_KW_{ij}(x-y)\dd x$. Consequently,
\begin{align}\label{5.aux}
  \pi_\delta p_i^k(x) = \sum_{j=1}^n\int_{\T^d}(\pi_\delta  
  W_{K,y}^{ij})(x)\pi_\delta u_j^k(y)\dd y\quad\mbox{for }x\in K.
\end{align}
The function $(x,y)\mapsto(\pi_\delta W_{K,y}^{ij})(x)$ is again a kernel, but the integral is strictly speaking not a convolution. We use the following version of the Young convolution inequality: Let $v\in L^q(\T^d)$ for $1\le q\le \infty$ and let $W=W(x,y)$ satisfy
\begin{align*}
  \sup_{y\in\T^d}\|W(\cdot,y)\|_{L^1(\T^d)} < \infty, \quad
  \sup_{x\in\T^d}\|W(x,\cdot)\|_{L^1(\T^d)} < \infty.
\end{align*}
Then
\begin{align*}
  \bigg\|\int_{\T^d}W(\cdot,y)v(y)\dd y\bigg\|_{L^q\T^d)}
  \le \bigg(\sup_{y\in\T^d}\|W(\cdot,y)\|_{L^1(\T^d)}
  \sup_{x\in\T^d}\|W(x,\cdot)\|_{L^1(\T^d)}\bigg)\|v\|_{L^q(\T^d)}.
\end{align*}
The proof follows directly from H\"older's inequality and is thus omitted. We apply this result to \eqref{5.aux} to find that
\begin{align*}
  \|\pi_\delta p_i^k\|_{L^{r_1}(\T^d)} 
  \le C\sum_{j=1}^n\|\pi_\delta u_j^k\|_{L^{r_1}(\T^d)},
\end{align*}
where the constant $C>0$ depends on $\|W_{ij}\|_{L^1(\T^d)}$ but is independent of $\delta$. A summation over $k=1,\ldots,N$ leads to
\begin{align*}
  \sum_{k=1}^N\Delta t\|p_i^k\|_{0,r_1,\mathcal{T}}^{r_1}
  = \sum_{k=1}^N\Delta t\|\pi_\delta p_i^k\|_{L^{r_1}(\T^d)}^{r_1}
  \le C\sum_{j=1}^n\sum_{k=1}^N\Delta t\|u_j^k\|_{0,r_1,\mathcal{T}}
  \le C,
\end{align*}
where we use the bound from Lemma \ref{lem.r1}. The remaining estimate follows from
\begin{align*}
  \sum_{k=1}^N\Delta t\|\na^h p_i^k\|_{0,r_2,\mathcal{T}^*}^{r_2}
  = \sum_{k=1}^N\Delta t\|\pi_\delta^*\na^h p_i^k
  \|_{L^{r_2}(\T^d)}^{r_2}
  \le C\sum_{j=1}^n\sum_{k=1}^N\Delta t\|\pi_\delta \na^h u_j^k
  \|_{L^{r_2}(\T^d)}^{r_2}
\end{align*}
and Lemma \ref{lem.r2}. 
\end{proof}

It remains to derive a uniform estimate for the discrete time derivative.

\begin{lemma}[Bound for the discrete time derivative]\label{lem.time}
There exists a constant $C>0$ independent of the mesh size $\delta$ such that for $i=1,\ldots,n$,
\begin{align*}
  \sum_{k=1}^N\Delta t\|\pa_t^{\Delta t}u_i^k\|_{-1,r_2,\mathcal{T}}
  \le C,
\end{align*}
where $\pa_t^{\Delta t}u_i^k$ is defined in \eqref{2.ut} and $r_2=(d+2)/(d+1)$. 
\end{lemma}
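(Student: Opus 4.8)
The goal is to bound the discrete dual norm of the time derivative. The natural starting point is the scheme \eqref{2.euler} itself: for any test function $\phi=(\phi_K)_{K\in\mathcal{T}}$, multiplying by $\phi_K$, summing over $K$, and applying the discrete integration-by-parts formula \eqref{2.dibp} gives
\begin{align*}
  \sum_{K\in\mathcal{T}}\m(K)(\pa_t^{\Delta t}u_{i,K}^k)\phi_K
  = \sum_{\sigma=K|L\in\mathcal{E}}\mathcal{F}_{K,\sigma}[u_i^k,p_i^k]
  \D_{K,\sigma}\phi,
\end{align*}
so by definition of the dual norm it suffices to estimate the right-hand side by $C\|\phi\|_{1,r_2',\mathcal{T}}$, where $r_2'$ is the conjugate exponent of $r_2$ (note $r_2'=d+2$). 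First I would insert the flux definition \eqref{2.flux}, splitting the right-hand side into the diffusion contribution $-\tau_\sigma B_\kappa(\D_\sigma p_i^k)\D_{K,\sigma}u_i^k$ and the drift contribution $-\tau_\sigma\widehat{u}_{i,\sigma}^k\D_{K,\sigma}p_i^k$.

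For the diffusion term, I would use the bound $0<B_\kappa(s)\le\kappa$ (from Hypothesis (H2)) to discard the Bernoulli factor at the cost of a factor $\kappa$, rewrite $\tau_\sigma\D_{K,\sigma}u_i^k\D_{K,\sigma}\phi$ in terms of the discrete gradients via \eqref{2.nahv} and the identity $\m(\sigma)\dd_\sigma=\m(\Delta_\sigma)$, and then apply the discrete Hölder inequality on the dual mesh with exponents $r_2$ and $r_2'$. This produces $\|\na^h u_i^k\|_{0,r_2,\mathcal{T}^*}\|\na^h\phi\|_{0,r_2',\mathcal{T}^*}$, and summing over $k$ after one more application of Hölder in time (using $\Delta t\le T$ and that $\|\phi\|_{1,r_2',\mathcal{T}}$ is $k$-independent) reduces matters to $\sum_k\Delta t\|\na^h u_i^k\|_{0,r_2,\mathcal{T}^*}^{r_2}$, which is bounded by Lemma \ref{lem.r2}. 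The drift term is handled the same way: $\tau_\sigma\widehat{u}_{i,\sigma}^k\D_{K,\sigma}p_i^k\D_{K,\sigma}\phi$ is rewritten so that the factor $\widehat{u}_{i,\sigma}^k\na^h p_i^k$ appears, and a discrete Hölder step gives $\|\widehat{u}_i^k\na^h p_i^k\|_{0,r_2,\mathcal{T}^*}\|\na^h\phi\|_{0,r_2',\mathcal{T}^*}$, which after summation in time is controlled by the second quantity in Lemma \ref{lem.r2}.

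The one subtlety I anticipate is the exponent bookkeeping: the dual norm $\|\cdot\|_{-1,r_2,\mathcal{T}^*}$ is defined as a supremum over $\phi$ with $\|\phi\|_{1,r_2',\mathcal{T}}=1$ where $r_2'=d+2$ is the Hölder conjugate of $r_2=(d+2)/(d+1)$, so I must be careful that the discrete gradient of $\phi$ is measured in the $L^{r_2'}$ norm on the dual mesh and that the $W^{1,r_2'}$ norm indeed dominates it. The main obstacle, such as it is, is not an obstacle of ideas but of verifying that no power of the mesh size $h$ or $\Delta t$ remains: one must check that the transmissibility coefficients $\tau_\sigma$, the dual-cell volumes $\m(\Delta_\sigma)$, and the factors $d/\dd_\sigma$ in the discrete gradient \eqref{2.nahv} all combine into mesh-independent constants after the Hölder pairing, which they do precisely because $\m(\sigma)\dd_\sigma=\m(\Delta_\sigma)$ on a Cartesian mesh. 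With these cancellations confirmed, the bound follows by combining Lemmas \ref{lem.r1} and \ref{lem.r2}.
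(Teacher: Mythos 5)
Your proposal follows essentially the same route as the paper's proof: test the scheme against a discrete test function, apply discrete integration by parts, bound $\kappa B_{i,\sigma}^k\le\kappa$, rewrite the edge sums in terms of the discrete gradients, apply H\"older on the dual mesh with exponents $r_2$ and $r_2'$, and conclude with Lemma \ref{lem.r2} (together with the H\"older-in-time step, which the paper leaves implicit). The only nitpick is that the geometric identity should read $d\,\m(\Delta_\sigma)=\m(\sigma)\dd_\sigma$ as in \eqref{2.Deltasig}; the missing factor $d$ is a harmless dimensional constant and does not affect the mesh-independence of the bound.
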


\begin{proof}
Let $\phi\in C_0^\infty(\T^d)$ and set $\phi_K=\phi(x_K)$ for $K\in\mathcal{T}$. We multiply \eqref{2.euler} by $\phi_K$ and integrate by parts:
\begin{align*}
  0 &= \sum_{K\in\mathcal{T}}\frac{\m(K)}{\Delta t}
  (u_{i,K}^k-u_{i,K}^{k-1})\phi_K
  + \sum_{\sigma\in\mathcal{E}}\tau_\sigma
  \big(\kappa B_{i,\sigma}^k\D_{K,\sigma}u_i^k
  + \widehat{u}_{i,\sigma}^k\D_{K,\sigma}p_i^k\big)\D_{K,\sigma}\phi,
\end{align*}
where we identify the functions $\phi=\phi(x)$ and $\phi=(\phi_K)_{K\in\mathcal{T}}$ and recall that $B_{i,\sigma}^k=B(\kappa^{-1}\D_\sigma p_i^k)$. It follows from \eqref{2.Deltasig} and \eqref{2.nahv} that
\begin{align*}
  \tau_\sigma\D_{K,\sigma} u_i^k\D_{K,\sigma}\phi
  = \frac{1}{d}\m(\Delta_\sigma)\na_\sigma^h u_i^k\cdot\na_\sigma^h\phi,
\end{align*}
from which we infer that
\begin{align}\label{5.euler}
  \sum_{K\in\mathcal{T}}\frac{\m(K)}{\Delta t}
  (u_{i,K}^k-u_{i,K}^{k-1})\phi_K
  = -\frac{1}{d}\sum_{\sigma\in\mathcal{E}}\m(\Delta_\sigma)
  \big(\kappa B_{i,\sigma}^k\na_\sigma^h u_i^k
  + \widehat{u}_{i,\sigma}^k\na_\sigma^h p_i^k\big)
  \cdot\na_\sigma^h\phi.
\end{align}
We conclude that
\begin{align*}
  \bigg|\sum_{K\in\mathcal{T}}\m(K)\pa_t^{\Delta t}u_{i,K}^k\phi_K\bigg|
  \le C\big(\|\na^h u_i^k\|_{0,r_2,\mathcal{T}^*}
  + \|\widehat{u}_i^k\na^h p_i^k\|_{0,r_2,\mathcal{T}^*}\big)
  \|\na^h\phi\|_{0,r_2',\mathcal{T}^*},
\end{align*}
where $1/r_2+1/r_2'=1$. After summing over $k=1,\ldots,N$, using Lemma \ref{lem.r2}, and taking the supremum over all $\phi$, we obtain the desired bound.
\end{proof}


\subsection{Compactness}

Let $u=(u_1,\ldots,u_n)$ be a finite-volume solution to scheme \eqref{2.euler}--\eqref{2.uhat} associated to the mesh $\mathcal{D}_m=(\mathcal{T}_m,\mathcal{E}_m,\mathcal{P}_m;\Delta t_m,N_m)$ with mesh size $\delta_m\to 0$ as $m\to\infty$ and constructed in Theorem \ref{thm.ex}. The uniform estimates from Lemmas \ref{lem.r2} and \ref{lem.time} allow us to conclude the relative compactness of $(u_m)$. Recall that $r_1=(d+2)/d$ and $r_2=(d+2)/(d+1)$. To simplify the notation, we set 
\begin{align*}
  \pa_t^m:=\pa_t^{\Delta t_m},\quad \na^m:=\na^{h_m}, \quad \pi_m:=\pi_{\delta_m}, \quad \pi^*_m:=\pi^*_{\delta_m}.
\end{align*}

\begin{lemma}\label{lem.convu}
There exists a limit function $u_i^*\in L^{r_1}(\Omega_T)$ satisfying $\na u_i^*\in L^{r_2}(\Omega_T)$ such that, up to a subsequence and for all $1\le r<r_1$ and $i=1,\ldots,n$, as $m\to\infty$, 
\begin{align*}
  \pi_m u_{i}\to u_i^* \quad\mbox{strongly in }L^r(\Omega_T), \quad
  \pi_{m}^*\na^m u_{i}\rightharpoonup \na u_i^*
  \quad\mbox{weakly in }L^{r_2}(\Omega_T).
\end{align*}
\end{lemma}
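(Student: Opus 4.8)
The plan is to run a discrete Aubin--Lions-type compactness argument on the sequence $(\pi_m u_i)_m$, exploiting the uniform bounds of Lemmas \ref{lem.r1}, \ref{lem.r2}, and \ref{lem.time}, and then to identify the weak limit of the discrete gradients. First I would assemble the bounds in the discrete functional setting: since $\T^d$ has finite measure and $r_2<r_1$, Lemma \ref{lem.r1} makes $(\pi_m u_i)_m$ bounded in $L^{r_1}(\Omega_T)$ and hence in $L^{r_2}(\Omega_T)$; Lemma \ref{lem.r2} bounds $\sum_{k}\Delta t_m\|\na^m u_i^k\|_{0,r_2,\mathcal{T}_m^*}^{r_2}$ uniformly, so that $u_i$ is bounded in the discrete $L^{r_2}(0,T;W^{1,r_2}(\T^d))$-norm; and Lemma \ref{lem.time} bounds $\sum_{k}\Delta t_m\|\pa_t^m u_i^k\|_{-1,r_2,\mathcal{T}_m}$ uniformly, so the discrete time derivative is bounded in the discrete $L^1(0,T;W^{-1,r_2}(\T^d))$-norm.

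Next I would invoke a discrete Aubin--Lions lemma: one combines the discrete space-translate estimate, bounding $\|\pi_m u_i(\cdot,\cdot+\xi)-\pi_m u_i\|_{L^{r_2}}$ by $C(|\xi|+h_m)$ times the discrete $L^{r_2}(W^{1,r_2})$-norm, with the time-translate estimate supplied by the uniform bound on $\pa_t^m u_i$, and then applies the Kolmogorov--Riesz--Fr\'echet criterion. This yields relative compactness of $(\pi_m u_i)_m$ in $L^{r_2}(\Omega_T)$; passing to a (non-relabelled) subsequence, there is $u_i^*\in L^{r_2}(\Omega_T)$ with $\pi_m u_i\to u_i^*$ strongly in $L^{r_2}(\Omega_T)$ and, after a further extraction, pointwise a.e.\ in $\Omega_T$. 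Fatou's lemma together with the $L^{r_1}$-bound then gives $u_i^*\in L^{r_1}(\Omega_T)$ with $u_i^*\ge 0$, and interpolating the strong $L^{r_2}$-convergence against the uniform $L^{r_1}$-bound, via $\|\pi_m u_i-u_i^*\|_{L^r}\le\|\pi_m u_i-u_i^*\|_{L^{r_2}}^{\theta}\|\pi_m u_i-u_i^*\|_{L^{r_1}}^{1-\theta}$, upgrades the convergence to $L^r(\Omega_T)$ for every $1\le r<r_1$. A diagonal extraction over $i=1,\ldots,n$ gives one subsequence serving all components.

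Finally I would identify the gradient limit. Since $(\pi_m^*\na^m u_i)_m$ is bounded in $L^{r_2}(\Omega_T;\R^d)$ and $r_2>1$, a subsequence converges weakly to some $g_i\in L^{r_2}(\Omega_T;\R^d)$. For $\psi\in C_c^\infty((0,T)\times\T^d;\R^d)$, a discrete integration-by-parts identity (the duality between the discrete gradient $\na^h$ and a discrete divergence) yields
\[
  \int_0^T\int_{\T^d}\pi_m^*\na^m u_i\cdot\psi\,\dd x\,\dd t
  = -\int_0^T\int_{\T^d}\pi_m u_i\,\diver\psi\,\dd x\,\dd t+\eta_m,
\]
with a consistency error $\eta_m$ controlled by $C\delta_m\|\psi\|_{C^2}\sum_k\Delta t_m\|u_i^k\|_{0,1,\mathcal{T}_m}\to 0$. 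Letting $m\to\infty$ and using the strong convergence above on the right-hand side gives $\int_0^T\int_{\T^d}g_i\cdot\psi\,\dd x\,\dd t=-\int_0^T\int_{\T^d}u_i^*\,\diver\psi\,\dd x\,\dd t$ for all such $\psi$, whence $g_i=\na u_i^*$ distributionally; in particular $\na u_i^*\in L^{r_2}(\Omega_T)$ and $\pi_m^*\na^m u_i\rightharpoonup\na u_i^*$ weakly in $L^{r_2}(\Omega_T)$.

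I expect the main obstacle to be the compactness step: faithfully transcribing the mesh-dependent discrete norms $\|\cdot\|_{1,r_2,\mathcal{T}_m}$ and $\|\cdot\|_{-1,r_2,\mathcal{T}_m}$ on the varying meshes $\mathcal{T}_m$ into the hypotheses of an abstract Aubin--Lions theorem, and proving the uniform-in-$m$ space- and time-translate estimates with the right dependence on $|\xi|$, $h_m$, and $\Delta t_m$. The gradient identification, by contrast, is routine once the consistency error of the discrete integration-by-parts formula has been quantified.
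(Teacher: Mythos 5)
Your proposal is correct and takes essentially the same route as the paper: the paper combines the uniform bounds of Lemmas \ref{lem.r2} and \ref{lem.time} with the discrete Aubin--Lions compactness theorem of \cite[Theorem 3.4]{GaLa12} to get strong $L^1(0,T;L^{r_2}(\T^d))$ and a.e.\ convergence, upgrades to $L^r(\Omega_T)$ for $r<r_1$ using the uniform $L^{r_1}$ bound of Lemma \ref{lem.r1}, and identifies the weak limit of the discrete gradients by invoking \cite[Lemma 4.4]{CLP03} or \cite[Prop.~3.8]{ScSe22}. Your translate-estimate/Kolmogorov--Riesz argument and your discrete integration-by-parts identification are exactly the content of those cited results, so you are unpacking the paper's black boxes rather than following a different path.
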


\begin{proof}
In view of the uniform estimates
\begin{align*}
  \sum_{k=1}^{N_m}\Delta t_m\|u_{i}^k\|_{1,r_2,\mathcal{T}_m}^{r_2}
  + \sum_{k=1}^{N_m}\Delta t_m\|\pa_t^{m}u_{i}
  \|_{-1,r_2,\mathcal{T}_m} \le C
\end{align*}
from Lemmas \ref{lem.r2} and \ref{lem.time}, we can apply \cite[Theorem 3.4]{GaLa12} and argue as in \cite[Sec.~4.2]{JPZ24} to conclude the existence of a subsequence of $(u_m)$ such that, as $m\to\infty$,
\begin{align*}
  \pi_m u_{i}\to u_i^* \quad\mbox{strongly in }L^1(0,T;L^{r_2}(\T^d)).
\end{align*}
In particular, up to a subsequence, $(\pi_m u_{i})$ converges a.e. Then it follows from the uniform $L^{r_1}(\Omega_T)$ bound for $\pi_m u_{i}$ that $\pi_m u_{i}\to u_i^*$ strongly in $L^r(\Omega_T)$ for $r<r_1$. The weak convergence $\pi_m^*\na^m u_{i}\rightharpoonup \na u_i^*$ in $L^{r_2}(\Omega_T)$ is a consequence of the uniform estimate of Lemma \ref{lem.r2} and the arguments in the proof of \cite[Lemma 4.4]{CLP03} or \cite[Prop.~3.8]{ScSe22}. 
\end{proof}

\begin{lemma}\label{lem.convp}
There exists a limit function $p_i^*\in L^{r_1}(\Omega_T)$ satisfying $\na p_i^*\in L^{r_2}(\Omega_T)$ such that, up to a subsequence and for all $1\le r<r_1$ and $i=1,\ldots,n$, as $m\to\infty$, 
\begin{align*}
  \pi_m p_{i}\to p_i^* \quad\mbox{strongly in }L^r(\Omega_T), \quad
  \pi_{m}^*\na^m p_{i}\rightharpoonup \na p_i^*
  \quad\mbox{weakly in }L^{r_2}(\Omega_T),
\end{align*}
and it holds that $p_i^*=p_i(u^*)$ (see \eqref{1.p}). 
\end{lemma}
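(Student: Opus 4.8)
The plan is to mirror the proof of Lemma~\ref{lem.convu} as closely as possible, using the uniform estimates on the potential established in Lemma~\ref{lem.p} together with the discrete time-derivative argument already carried out for $u_i$. First I would record the uniform bounds
\begin{align*}
  \sum_{k=1}^{N_m}\Delta t_m\|p_{i}^k\|_{1,r_2,\mathcal{T}_m}^{r_2}\le C,
\end{align*}
which follows directly from Lemma~\ref{lem.p} once we note $\|p_i^k\|_{0,r_2,\mathcal{T}}\le C\|p_i^k\|_{0,r_1,\mathcal{T}}$ by the H\"older inequality on the bounded torus (or, more simply, we may obtain the convergences by passing through Lemma~\ref{lem.r1} directly, since $p^k$ is a convolution of $u^k$). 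Then, to control a discrete time derivative of $p_i$, I would apply the discrete differentiation rule in Lemma~\ref{lem.diff}: since $\pa_t^{m}p_i^k$ is, up to the convolution operator $W_{KJ}^{ij}$, a linear image of $\pa_t^{m}u_j^k$ (for the fully implicit potential~\eqref{3.repp}; for the mid-point potential~\eqref{3.attp} it is the average of $\pa_t^{m}u_j^k$ and $\pa_t^{m}u_j^{k-1}$), the bound $\sum_k\Delta t_m\|\pa_t^{m}p_i^k\|_{-1,r_2,\mathcal{T}_m}\le C$ is inherited from Lemma~\ref{lem.time} via the same Young-type convolution inequality used in the proof of Lemma~\ref{lem.p}, applied in the dual norm.

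With these two ingredients in hand, the compactness step is verbatim that of Lemma~\ref{lem.convu}: the discrete Aubin--Lions result \cite[Theorem 3.4]{GaLa12}, combined with the uniform $L^{r_1}(\Omega_T)$ bound on $\pi_m p_i$ from Lemma~\ref{lem.p}, yields a subsequence along which $\pi_m p_{i}\to p_i^*$ strongly in $L^1(0,T;L^{r_2}(\T^d))$, hence a.e.\ after a further subsequence, hence strongly in $L^r(\Omega_T)$ for every $1\le r<r_1$ by uniform integrability; and the uniform $L^{r_2}$ bound on $\na^m p_i$ gives the weak convergence $\pi_m^*\na^m p_{i}\rightharpoonup \na p_i^*$ in $L^{r_2}(\Omega_T)$, with the limit of the discrete gradient identified as the distributional gradient of $p_i^*$ exactly as in \cite[Lemma 4.4]{CLP03} or \cite[Prop.~3.8]{ScSe22}.

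The only genuinely new point is the identification $p_i^*=p_i(u^*)$, i.e.\ that the limit of the discrete potentials is the nonlocal convolution~\eqref{1.p} of the limit density. For this I would start from the representation~\eqref{5.aux}, namely $\pi_m p_i^k(x)=\sum_{j=1}^n\int_{\T^d}(\pi_m W_{K,y}^{ij})(x)\,\pi_m u_j^k(y)\,\dd y$, and pass to the limit. The density factor $\pi_m u_j$ converges strongly in $L^r(\Omega_T)$ to $u_j^*$ by Lemma~\ref{lem.convu}; the kernel factor $(\pi_m W_{K,y}^{ij})(x)$ is, by definition, a local average of $W_{ij}(x-y)$ over the cell $K\ni x$, so it converges to $W_{ij}(x-y)$ in $L^1(\T^d\times\T^d)$ as $\delta_m\to 0$ by the continuity of translations/averaging in $L^1$ (using only $W_{ij}\in L^1$, which is Hypothesis (H4)). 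Writing the difference $\pi_m p_i^k-p_i(u^*)$ as a sum of one term with the kernel error against $\pi_m u_j$ and one term with the true kernel against $u_j^*-\pi_m u_j$, the first is controlled by $\|\pi_m W^{ij}-W_{ij}\|_{L^1}\,\|\pi_m u_j\|_{L^{\infty}_t L^{r_1}_x}$-type estimates (Young's convolution inequality in the generalized form stated in the proof of Lemma~\ref{lem.p}) and the second by $\|W_{ij}\|_{L^1}\,\|u_j^*-\pi_m u_j\|_{L^r}$; both vanish as $m\to\infty$. I expect the main obstacle to be purely bookkeeping: matching integrability exponents so that Young's inequality closes (one wants the kernel in $L^1$ and the density in the Lebesgue space in which it actually converges, $L^r$ with $r<r_1$, which is fine), and handling the mid-point averaging in~\eqref{3.attp} by noting that $\pi_m u_j^{k-1}$ shares the same limit as $\pi_m u_j^k$ after the time shift, which is standard. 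For the two potential choices~\eqref{3.repp} and \eqref{3.attp} one obtains the same limit, so the statement holds in both the repulsive and attractive settings covered by Theorems~\ref{thm.rep} and~\ref{thm.att}.
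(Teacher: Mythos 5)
Your argument reaches the right conclusion, and your ``identification'' step is in fact the whole of the paper's proof of the strong convergence: the paper does not establish any independent compactness for $p_i$. It simply writes, via \eqref{5.aux}, the difference $\pi_m p_i - p_i(u^*)$ as (kernel error)$\,\times\,\pi_m u_j$ plus $W_{ij}\times(\pi_m u_j-u_j^*)$, bounds the first term by the generalized Young inequality using the mass bound $\|u_j^k\|_{0,1,\mathcal{T}_m}$ and the convergence of the cell-averaged kernel, and the second by $\|W_{ij}\|_{L^1}$ times the strong $L^1$ convergence of $\pi_m u_j$ from Lemma~\ref{lem.convu}; the upgrade to $L^r(\Omega_T)$, $r<r_1$, via the bound of Lemma~\ref{lem.p}, and the weak gradient convergence from Lemma~\ref{lem.r2}-type bounds, are then exactly as you describe. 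Your preliminary discrete Aubin--Lions step for $p_i$ is therefore redundant: once the identification argument is run, it already gives strong convergence of $\pi_m p_i$ to $p_i(u^*)$, so nothing is gained by first extracting a limit $p_i^*$ abstractly. Moreover, that detour is the only place where your proposal goes beyond what is actually proved in the paper, and it is not free of charge: the claimed bound $\sum_k\Delta t_m\|\pa_t^{m}p_i^k\|_{-1,r_2,\mathcal{T}_m}\le C$ is not contained in Lemma~\ref{lem.time} and would require its own duality argument (pairing $\pa_t^m p_i^k$ with $\phi$, rewriting via the symmetry $W_{KJ}^{ij}=W_{JK}^{ji}$ as a pairing of $\pa_t^m u_j^k$ with $\psi_j=\sum_K\m(K)W_{JK}^{ji}\phi_K$, and showing $\|\psi_j\|_{1,r_2',\mathcal{T}}\le C\|\phi\|_{1,r_2',\mathcal{T}}$ using Lemma~\ref{lem.diff} and Young); plausible, but an extra lemma you would have to prove. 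One further bookkeeping caveat in your identification step: pairing the kernel error measured in $L^1(\T^d\times\T^d)$ with $\pi_m u_j$ in ``$L^\infty_t L^{r_1}_x$'' does not close (that bound is not available, and H\"older would anyway require the density in $L^\infty_y$); the correct pairing, which is what the paper's generalized Young inequality encodes, is the kernel error in $\sup_y\|\cdot\|_{L^1_x}$ (which tends to zero uniformly in $y$ by $L^1$-continuity of translations) against the $L^\infty_t L^1_x$ mass bound. With these two points repaired or removed, your proof coincides with the paper's.
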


\begin{proof}
We prove the lemma for the fully implicit scheme, where $p_i^k$ is defined in \eqref{3.repp}. The mid-point scheme \eqref{3.attp} is treated in a similar way. We compute the error between $\pi_{m}p_i^k$ and $p_i^*$, using formulation \eqref{5.aux} and choosing $x\in K\in\mathcal{T}_m$ and $t\in(t_{k-1},t_k]$:
\begin{align*}
  \pi_{m}p_{i}^k(x) - p_i^*(x,t)
  &= \sum_{j=1}^n\int_{\T^d}\big((\pi_{m}W_{K,y}^{ij})(x)
  - W_{ij}(x-y)\big)\pi_{m}u_{j}^k(y)\dd y \\
  &\phantom{xx}+ \sum_{j=1}^n\int_{\T^d}W_{ij}(x-y)
  \big(\pi_{m}u_{j}^k(y)-u_{j}^*(y)\big)\dd y,
\end{align*}
recalling the definition $W_{K,y}^{ij}=\m(K)^{-1}\int_KW_{ij}(x-y)\dd x$. By the (generalized) Young convolution inequality,
\begin{align*}
  \int_{\T^d}|\pi_{m}p_i^k(x) - p_i^*(x,t)|\dd x
  &\le \sum_{j=1}^n\|u_{j}^k\|_{0,1,\mathcal{T}_m}
  \int_{\T^d}\big|(\pi_{m}W_{K,y}^{ij})(x) - W_{ij}(x-y)\big|\dd x \\
  &\phantom{xx}+ \sum_{j=1}^n\|W_{ij}\|_{L^1(\T^d)}
  \int_{\T^d}\big|\pi_{m}u_j^k(y) - u_j^*(t,y)\big|\dd y.
\end{align*}
We deduce from the boundedness of $W_{ij}$ in $L^1(\T^d)$ and the a.e.\ convergence $(\pi_{m}W_{K,y}^{ij})(x)\to W_{ij}(x-y)$ that the first term on the right-hand side converges to zero as $m\to \infty$. The second term on the right-hand side converges to zero since $\pi_m u_{i}$ converges strongly in $L^1(\Omega_T)$. Thus, $\pi_m p_{i}\to p_i^*=p_i(u^*)$ strongly in $L^1(\Omega_T)$ and, up to a subsequence, a.e. The $L^{r_1}(\Omega_T)$ bound in Lemma \ref{lem.p} shows that this convergence holds for any $1\le r<r_1$. 

Lemma \ref{lem.p} provides a uniform bound for $(\na^m p_{i})$.  Arguing as in the proof of Lemma \ref{lem.convu}, we conclude the weak convergence of $\na^m p_{i}$ in $L^{r_2}(\Omega_T)$. 
\end{proof}

\begin{lemma}\label{lem.convB}
The following convergences hold for all $1\le r<r_1=(d+2)/d$ and $1\le s<\infty$:
\begin{align*}
  \pi_{m}^*\widehat{u}_i\to u_i^*
  \quad\mbox{strongly in }L^r(\Omega_T), \quad
  \pi_{m}^*(B_i^k)\to 1 \quad\mbox{strongly in }L^s(\Omega_T).
\end{align*}
\end{lemma}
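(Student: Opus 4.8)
The plan is to establish both convergences by identifying the pointwise (a.e.) limits of the edge-reconstructed quantities and then upgrading to strong $L^r$ (resp. $L^s$) convergence via a uniform integrability argument, using the bounds already in hand. For the first convergence, I would start from the elementary bound already used in the proof of Lemma \ref{lem.r1}: for $x\in\Delta_\sigma$ with $\sigma=K|L$ and $t\in(t_{k-1},t_k]$ one has $0\le \pi_m^*\widehat u_i(t,x)\le u_{i,K}^k+u_{i,L}^k$, and similarly $|\pi_m^*\widehat u_i - \pi_m u_i|$ is controlled, on each diamond, by $|\D_{K,\sigma}u_i^k|$. The key point is that the ``edge vs.\ cell'' discrepancy is a discrete gradient term: writing $\widehat u_{i,\sigma}^k - u_{i,K}^k \in\{0,\D_{K,\sigma}u_i^k\}$ and using the chain rule \eqref{3.chain}, $|\D_{K,\sigma}u_i^k| = 2(\bar u_{i,\sigma}^k)^{1/2}|\D_{K,\sigma}(u_i^k)^{1/2}|$, so by Hölder with $1/(2r_1)+1/2 = 1/r_2$ and Lemmas \ref{lem.r1}, \ref{lem.r2} one gets $\sum_k\Delta t_m\sum_{\sigma}\m(\Delta_\sigma)|\widehat u_{i,\sigma}^k - u_{i,K}^k|^{r_2}\to 0$ at a rate vanishing with the mesh (this is the same estimate structure as the gradient bound $\|\na^h u_i^k\|_{0,r_2,\mathcal{T}^*}$). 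Hence $\pi_m^*\widehat u_i - \pi_m u_i\to 0$ strongly in $L^{r_2}(\Omega_T)$, and since $\pi_m u_i\to u_i^*$ strongly in $L^r(\Omega_T)$ for all $r<r_1$ by Lemma \ref{lem.convu}, we obtain $\pi_m^*\widehat u_i\to u_i^*$ at least in $L^{r_2}$. To reach all $r<r_1$, I would invoke the uniform $L^{r_1}$ bound on $\pi_m^*\widehat u_i$ from \eqref{5.est2} (which gives equi-integrability of $|\pi_m^*\widehat u_i|^r$ for $r<r_1$) together with the a.e.\ convergence along a subsequence, and conclude strong $L^r(\Omega_T)$ convergence by Vitali's theorem; the full sequence converges because the limit $u_i^*$ is uniquely identified.

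For the second convergence, the idea is that $B_{i,\sigma}^k = B(\kappa^{-1}\D_\sigma p_i^k)$, and by Hypothesis (H3) together with $0<B\le 1$ (Hypothesis (H2)), one has the two-sided bound $0\le 1 - B_{i,\sigma}^k\le \kappa^{-1}\alpha\,\D_\sigma p_i^k$ (this is exactly inequality \eqref{3.Bineq}). Therefore
\begin{align*}
  \int_0^T\!\!\int_{\T^d}|\pi_m^*(B_i^k) - 1|^s\,\dd x\,\dd t
  \le \Big(\frac{\alpha}{\kappa}\Big)^{s-1}\int_0^T\!\!\int_{\T^d}
  |\pi_m^*(B_i^k) - 1|\cdot\Big(\frac{\alpha}{\kappa}\Big)
  |\pi_m^*\na^m p_i\cdot\dd_\sigma|^{\,?}\cdots
\end{align*}
more cleanly: since $|1-B_{i,\sigma}^k|\le 1$, it suffices to prove $\pi_m^*(B_i^k)\to 1$ in $L^1(\Omega_T)$, and then boundedness in $L^\infty$ gives convergence in every $L^s$, $s<\infty$. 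For the $L^1$ statement, use $0\le 1 - B_{i,\sigma}^k\le (\alpha/\kappa)\D_\sigma p_i^k = (\alpha/\kappa)\,\dd_\sigma\,|\na_\sigma^h p_i^k|/d$ (by \eqref{2.nahv}), so
\begin{align*}
  \|\pi_m^*(B_i^k) - 1\|_{L^1(\Omega_T)}
  \le \frac{\alpha}{\kappa d}\sum_{k=1}^{N_m}\Delta t_m
  \sum_{\sigma\in\mathcal{E}_m}\m(\Delta_\sigma)\,\dd_\sigma\,
  |\na_\sigma^h p_i^k|
  \le \frac{\alpha\,h_m}{\kappa d}\sum_{k=1}^{N_m}\Delta t_m
  \|\na^h p_i^k\|_{0,1,\mathcal{T}^*},
\end{align*}
and the last sum is bounded (uniformly in $m$) by Lemma \ref{lem.p} via Hölder in space with the finite measure of $\Omega_T$. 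Since $h_m\to 0$, the right-hand side vanishes, proving $\pi_m^*(B_i^k)\to 1$ in $L^1(\Omega_T)$ and hence, by the $L^\infty$ bound $|\pi_m^*(B_i^k)|\le 1$, in $L^s(\Omega_T)$ for every finite $s$.

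The main obstacle I anticipate is the first convergence, specifically making rigorous the claim that the ``upwind minus cell-value'' defect is genuinely of discrete-gradient size and therefore vanishes: one must carefully track which of $u_{i,K}^k$ or $u_{i,L}^k$ the upwind value $\widehat u_{i,\sigma}^k$ equals (this depends on the sign of $\D_{K,\sigma}p_i^k$, which is mesh-dependent and not sign-definite), and then control the resulting sum uniformly. The clean way around this is to avoid case distinctions entirely by using $|\widehat u_{i,\sigma}^k - u_{i,K}^k|\le \D_\sigma u_i^k = |\D_{K,\sigma}u_i^k|$ for every $\sigma$, which holds regardless of the upwind direction, and then the estimate reduces to exactly the $L^{r_2}$ gradient bound of Lemma \ref{lem.r2} multiplied by a power of the mesh size; the second convergence is comparatively routine once \eqref{3.Bineq} and Lemma \ref{lem.p} are invoked.
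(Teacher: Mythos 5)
Your proposal is correct and follows essentially the same route as the paper: for the upwind term you bound the edge--cell defect by $|\D_{K,\sigma}u_i^k|=\dd_\sigma|\na^h_\sigma u_i^k|/d$, gain a factor $h_m$ against the gradient bound of Lemma \ref{lem.r2}, and then upgrade the resulting a.e.\ convergence to $L^r(\Omega_T)$, $r<r_1$, via the uniform $L^{r_1}$ bound \eqref{5.est2} (the paper does this with an $L^1$ defect estimate, you with $L^{r_2}$ --- an immaterial difference); for the weight you use $|1-B_{i,\sigma}^k|^s\le|1-B_{i,\sigma}^k|\le(\alpha/\kappa)\D_\sigma p_i^k\le Ch_m|\na^h_\sigma p_i^k|$ together with Lemma \ref{lem.p}, exactly as in the paper. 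The abandoned display in the middle of your argument is harmless since the ``cleaner'' version that replaces it is the correct and complete one.
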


\begin{proof}
Let $\sigma=K|L\in\mathcal{E}_m$, $x\in\Delta_\sigma$ and $t\in(t_{k-1},t_k]$. Then we infer from
\begin{align*}
  |\pi_{m}^*&\widehat{u}_i(t,x)-\pi_{m}u_i(t,x)| \\
  &= \bigg|\frac{[\D_{K,\sigma}p_i^k]^+}{\D_{K,\sigma}p_i^k}u_{i,L}^k
  - \frac{[\D_{K,\sigma}p_i^k]^-}{\D_{K,\sigma}p_i^k}u_{i,K}^k
  - \frac{[\D_{K,\sigma}p_i^k]^+ 
  - [\D_{K,\sigma}p_i^k]^-}{\D_{K,\sigma}p_i^k}u_{i,K}^k\bigg| \\
  &=  \bigg|\frac{[\D_{K,\sigma}p_i^k]^+}{\D_{K,\sigma}p_i^k}
  (u_{i,L}^k-u_{i,K}^k)\bigg| \le |u_{i,L}^k-u_{i,K}^k|
\end{align*}
after integration that
\begin{align*}
  \int_0^T\int_{\T^d}|\pi_{m}^*\widehat{u}_i
  -\pi_{m}u_i|\dd x\dd t
  &\le C\sum_{k=1}^N\Delta t_m\sum_{K\in\mathcal{T}_m}
  \sum_{\sigma\in\mathcal{E}_K}|u_{i,L}^k-u_{i,K}^k| \\
  &\le Ch_m\|\na^m u_i\|_{0,1,\mathcal{T}_m^*} \to 0
  \quad\mbox{as }m\to\infty.
\end{align*}
We know from Lemma \ref{lem.convu} that $\pi_{m}u_i\to u_i^*$ a.e.\ in $\Omega_T$, from which we deduce that $\pi_{m}^*\widehat{u}_i\to u_i^*$ a.e. Then the $L^{r_1}(\T^d)$ bound for $\pi_m u_{i}$ from Lemma \ref{lem.r1} implies that $\pi_{m}^* \widehat{u}_i\to u_i^*$ strongly in $L^r(\Omega_T)$ for any $1\le r<r_1$. 

Let $1\le s<\infty$. The second convergence follows from $0<B(s)\le 1$, inequality \eqref{3.Bineq}, and definition \eqref{2.nahv} of $\na^m_\sigma p_i^k$:
\begin{align*}
  \sum_{k=1}^{N_m}\Delta t_m\int_{\T^d}
  |1-\pi_{m}^*(B_{i}^k)|^s\dd x
  &\le \sum_{k=1}^{N_m}\Delta t_m\int_{\T^d}
  |1-\pi_{m}^* B(\kappa^{-1}\D_\sigma p_i^k)|\dd x \\
  &\le Ch_m\sum_{k=1}^{N_m}
  \Delta t_m\int_{\T^d}|\pi_{m}^*\na^m p_i^k|\dd x.
\end{align*}
The limit $m\to\infty$ finishes the proof.
\end{proof}


\subsection{Convergence of the scheme}

In this subsection, we show that the solution to \eqref{2.euler}--\eqref{2.uhat} converges to a solution $(u^*,p^*)$ to \eqref{1.eq}--\eqref{1.ic} with $p^*=p(u^*)$, as the mesh size tends to zero. Let $\phi\in C_0^\infty(\Omega_T)$ and set $\phi_K^k=\phi(t_k,x_K)$ for all $K\in\mathcal{T}$ and $k=1,\ldots,N_m$. We multiply scheme \eqref{2.euler} by $\Delta t_m\phi_K^k$, sum over $k=1,\ldots,N_m$, and argue as in \eqref{5.euler} to obtain $J^m_1+J^m_2=0$, where
\begin{align*}
  J^m_1 &= \sum_{k=1}^{N_m}\sum_{K\in\mathcal{T}_m}\m(K)
  (u_{i,K}^k-u_{i,K}^{k-1}), \\
  J^m_2 &= \frac{1}{d}\sum_{k=1}^{N_m}\Delta t_m
  \sum_{\sigma\in\mathcal{E}_m}\m(\Delta_\sigma)
  \big(\kappa B_{i,\sigma}^k\na_\sigma^m u_i^k
  + \widehat{u}_{i,\sigma}^k\na_\sigma^m p_i^k\big)
  \cdot\na_\sigma^m\phi^k.
\end{align*}
Furthermore, we introduce the terms
\begin{align*}
  J_{10}^m &= -\int_0^T\int_{\T^d}\pa_t\phi \pi_{m}u_i\dd x\dd t
  - \int_{\T^d}\phi(0,x)\pi_{m}u_i(0,x)\dd x, \\
  J_{20}^m &= \int_0^T\int_{\T^d}\pi_{m}^*
  \big(\kappa B_{i}^k\na^m u_i^k
  + \widehat{u}_{i}^k\na^m p_i^k\big)
  \cdot\na\phi\dd x\dd t.
\end{align*}

We show that $J_1^m-J_{10}^m\to 0$ and $J_2^m-J_{20}^m\to 0$ as $m\to\infty$. The former convergence is proved in \cite[Theorem 5.2]{CLP03}. For the latter convergence, let $\sigma=K|L\in\mathcal{E}_m$, $x\in\Delta_\sigma$, and $t\in(t_{k-1},t_k]$. Then we deduce from $\phi_L^k-\phi_K^k = \na\phi(t,x)\cdot(x_L-x_K) + O(\dd_\sigma h_m)$ that
\begin{align*}
  \Delta t_m\m(\Delta_\sigma)(\phi_L^k-\phi_K^k)
  = \int_{t_{k-1}}^{t_k}\int_{\Delta_\sigma}\na\phi\dd x\dd t
  \cdot(x_L-x_K) + O(h_m\Delta t_m),
\end{align*}
and definition \eqref{2.nahv} of $\na_\sigma^m$ yields after multiplication of $d\nu_{K,\sigma}/\dd_\sigma$ that
\begin{align*}
  \Delta t_m \m(\Delta_\sigma)\na_\sigma^m\phi^k 
  = d\int_{t_{k-1}}^{t_k}\int_{\Delta_\sigma}\na\phi\dd x\dd t
  + O(\delta_m).
\end{align*}
This result gives
\begin{align*}
  J_2^m &= \sum_{k=1}^{N_m}\sum_{\sigma\in\mathcal{E}_m}
  \int_{t_{k-1}}^{t_k}\int_{\Delta_\sigma}\na\phi\dd x\dd t
  \cdot\big(\kappa B_{i,\sigma}^k\na_\sigma^m u_i^k
  + \widehat{u}_{i,\sigma}^k\na_\sigma^m p_i^k\big)
  + O(\delta_m) \\
  &= \int_0^T\int_{\T^d}\na\phi
  \cdot\pi_m^*\big(\kappa B_{i,\sigma}^k\na_\sigma^m u_i^k
  + \widehat{u}_{i,\sigma}^k\na_\sigma^m p_i^k\big)\dd x\dd t
  + O(\delta_m) = J_{20}^m + O(\delta_m),
\end{align*}
proving that $J_2^m-J_{20}^m\to 0$. 

The strong convergence of $\pi_{m}u_i$ and the fact $\pi_{m}u_i(0,x) = \m(K)^{-1}\int_K u_i^0\dd x$ for $x\in K$ shows that
\begin{align*}
  J_{10}^m \to -\int_0^T\int_{\T^d}\pa_t\phi u_i^* \dd x\dd t
  - \int_{\T^d}\phi(0,x)u_i^0(x)\dd x.
\end{align*}
Next, we perform the limit $m\to\infty$ in $J_{20}^m$. For this, we observe that the strong convergence $\pi_{m}^*(B_i^k)\to 1$ in $L^s(\Omega_T)$ for any $s<\infty$ (Lemma \ref{lem.convB}) and the weak convergence $\pi_{m}^*\na^h u_{i}\rightharpoonup \na u_i^*$ in $L^{r_2}(\Omega_T)$ (Lemma \ref{lem.convu}) imply that the product converges weakly in $L^1(\Omega_T)$:
\begin{align*}
  \kappa\int_0^T\int_\Omega\pi_{m}^*(B_i^k\na^m u_i^k)
  \cdot\na\phi\dd x\dd t \to \kappa\int_0^T\int_{\T^d}
  \na u_i^*\cdot\na\phi\dd x\dd t.
\end{align*}
We know from Lemmas \ref{lem.convp} and \ref{lem.convB} that 
\begin{align*}
  \pi_{m}^*\widehat{u}_i^k\to u_i^* &\quad\mbox{in }
  L^r(\Omega_T)\quad\mbox{for } r<r_1, \\
  \pi_{m}^*\na^h p_i^k\rightharpoonup \na p_i^*
  &\quad\mbox{in }L^{r_2}(\Omega_T).
\end{align*}
Moreover, the uniform bound in Lemma \ref{lem.r2} yields
$\pi_{m}^*(\widehat{u}_i^k\na^h p_i^k)\rightharpoonup g$ weakly in $L^{r_2}(\Omega_T)$ for some function $g\in L^{r_2}(\Omega_T)$. It follows from \cite[Lemma 12]{JPZ22} that we can identify the limit, $g=u_i^*\na p_i^*$. Therefore,
\begin{align*}
  \int_0^T\int_{\T^d}\pi_{m}^*(\widehat{u}_i^k\na^h p_i^k)
  \cdot\na\phi\dd x\dd t \to \int_0^T\int_{\T^d}
  u_i^*\na p_i^*\cdot\na\phi\dd x\dd t.
\end{align*}
We infer that
\begin{align*}
  J_{20}^m \to \int_0^T\int_{\T^d}(\kappa\na u_i^* + u_i^*\na p_i^*)
  \cdot\na\phi\dd x\dd t.
\end{align*}
Summarizing the previous convergences, we end up with
\begin{align*}
  0 &= J_1^m + J_2^m = (J_1^m-J_{10}^m) + (J_2^m-J_{20}^m)
  + J_{10}^m + J_{20}^m \\
  &\to -\int_0^T\int_{\T^d}\pa_t\phi u_i^* \dd x\dd t
  - \int_{\T^d}\phi(0,x)u_i^0(x)\dd x
  + \int_0^T\int_{\T^d}(\kappa\na u_i^* + u_i^*\na p_i^*)
  \cdot\na\phi\dd x\dd t,
\end{align*}
which concludes the proof.


\section{Numerical experiments}\label{sec.num}

We present in this section several numerical tests. We consider both repulsive and attractive interactions and use two different kernel functions. More precisely, we need to differentiate between interactions involving two distinct species and those within the same species. If $W_{ii}>0$ ($W_{ii}<0$), we say that the interactions within the same $i$th species are self-repulsive (self-attractive), and if $W_{ij}>0$ ($W_{ij}<0$) for $i\neq j$, the interactions are cross-repulsive (cross-attractive). The first kernel function we consider is the Gaussian
\begin{align}\label{5.gauss}
  W_{ij}(z) = \frac{\alpha_{ij}}{\sqrt{2\pi\eps^2}} 
  \exp \bigg(\frac{-|z|^2}{2\eps^2}\bigg), \quad z\in\R^d,
\end{align}
where $\eps>0$ and $\alpha_{ij}\in\R$, which has been used in \cite{JPZ24}. The second one is the top-hat kernel, which was studied in \cite{CSS24}:
\begin{equation}\label{5.tophat}
    W_{ij}(z) = \begin{cases}
    \alpha_{ij}/(2R) &\mbox{if } z \in [-R, R]^d, \\
    0 &\text{otherwise},
    \end{cases}
\end{equation}
where $R>0$ is the detection radius and $\alpha_{ij}$ measures the strength of attraction ($\alpha_{ij}<0$) or repulsion ($\alpha_{ij}>0$). In the numerical simulations, the top-hat kernel is extended periodically, while we use the whole-space Gaussian, which introduces jumps at the boundary. We use a fixed-point method to solve the system numerically; see Algorithm \ref{alg:fixed-iteration}. We consider the two-species cases only; the scheme can be easily extended to the $n$-species system. 

\begin{algorithm}[htbp]
  \caption{Iteration method for the two-species system.}
  \label{alg:fixed-iteration}
  \begin{algorithmic}[1]
    \REQUIRE $\mbox{tol}$, $u_i^{0}, i = 1,2$.
    \ENSURE $u_i^{N_T}$.
    \FOR {$k = 1, 2, \cdots, N_T$}
    \STATE $u_i^{k, 0} = u_i^{k-1}$, $\ell = 1$, $e_k = 1$;
        \WHILE{ $e_{k} > \mbox{tol}$}
    \STATE solve the equations 
    \begin{equation*} 
    \mathrm{m}(K) \frac{u_{i,K}^{k, \ell}
      - u_{i,K}^{k-1}}{\Delta t} 
    + \sum_{\sigma \in \mathcal{E}_K} \mathcal{F}_{K, \sigma} 
    [u_{i}^{k, \ell}, p_i^{k, \ell-1} ] = 0, \qquad i=1,2;
\end{equation*}
\STATE calculate $e_{k} = \max_i\{\|u_i^{k, \ell - 1} - u_i^{k, \ell}\|_{\infty}\}$ and let $u_i^{k,\ell} = u_i^{k,\ell-1}$, $\ell = \ell + 1$;
    \ENDWHILE
    \STATE $u_i^{k} = u_i^{k,\ell}$;
    \ENDFOR
  \end{algorithmic}
\end{algorithm}

The fluxes $\mathcal{F}_{K,\sigma}[u_i^{k,\ell},p_i^{k,\ell}]$ and $\mathcal{F}_{K,\sigma}[u_i^{k,\ell},p_i^{k,\ell-1}]$ are defined by \eqref{2.flux}. We use the discrete potentials $p_{i,K}^{k,\ell-1}$ for the fully implicit time scheme \eqref{3.repp} if $W_{11}>0$ and $W_{22}>0$, and the mid-point time averaging scheme \eqref{3.attp} if $W_{11}<0$ and $W_{22}<0$.


\subsection{Convergence rates}

We compute the convergence rates in space and time in one and two space dimensions to verify our numerical scheme.

\begin{example}[Convergence rates -- one space dimension]\rm 
We consider first the one-di\-men\-sional equations. Since the exact solution generally cannot be computed explicitly, we calculate a reference solution on a fine mesh with $\Delta t=T/2096$ (with $T=0.1$) and $\Delta x:=h=1/2048$. The initial data is
\begin{align*}
  u_1^0(x) = \sin(2\pi x) + 0.5, \quad
  u_2^0(x) = 0.1(\cos(2\pi x)+0.5) \quad\mbox{for }x\in[0, 1). 
\end{align*}
We consider the Gaussian kernel \eqref{5.gauss} with $\eps=1$, $\alpha_{ij}=10^{-3}$ for $i,j=1,2$, and $\kappa=0.01$. The $L^\infty$ and $L^1$ spatial errors for various mesh sizes $h$ are presented in Table \ref{tab.space1d}, confirming the second-order convergence in the discrete $L^1$ norm. The implicit Euler approximation is of first order, as confirmed by our numerical experiments; see Table \ref{tab.time1d}.
\end{example}

\begin{table}[htbp]
        \centering
        \caption{Spatial convergence rates in one space dimension.}
        \begin{tabular}{|c|c|c|c|c|}
            \hline
             \multirow{2}*{$\Delta x$} &  \multicolumn{2}{c|}{$u_1$} & \multicolumn{2}{c|}{$u_2$} \\
             \cline{2-5}
             ~ & $L^{\infty}$-error & $L^1$ error & $L^{\infty}$ error & $L^1$-error \\
             \hline
             $2^{-5}$ & 1.42e-03 & 9.08e-04 & 1.52e-04 & 9.04e-05 \\
             $2^{-6}$ & 3.55e-04 & 2.28e-04 & 4.01e-05 & 2.26e-05 \\
             $2^{-7}$ & 8.85e-05 & 5.68e-05 & 1.11e-05 & 5.64e-06 \\
             $2^{-8}$ & 2.21e-05 & 1.40e-05 & 3.22e-06 & 1.39e-06 \\
             $2^{-9}$ & 9.66e-06 & 3.34e-06 & 9.61e-07 & 3.31e-07 \\
             $2^{-10}$ & 3.25e-06 & 6.68e-07 & 2.51e-07 & 6.62e-08 \\
             \hline
             Order &  1.75 & 2.07 & 1.83 & 2.07\\
             \hline
        \end{tabular}
        \label{tab.space1d}
\end{table}

\begin{table}[htbp]
        \centering
        \caption{Temporal convergence rates in one space dimension.}
        \begin{tabular}{|c|c|c|c|c|}
            \hline
             \multirow{2}*{$\Delta t$} &  \multicolumn{2}{c|}{$u_1$} & \multicolumn{2}{c|}{$u_2$} \\
             \cline{2-5}
             ~ & $L^{\infty}$ error & $L^1$-error & $L^{\infty}$ error & $L^1$-error \\
             \hline
             $T/2^{5}$ & 2.33e-05 & 1.52e-05 & 4.00e-06 & 1.48e-06 \\
             $T/2^{6}$ & 1.15e-05 & 7.52e-06 & 1.99e-06 & 7.34e-07 \\
             $T/2^{7}$ & 5.68e-06 & 3.70e-06 & 9.78e-07 & 3.61e-07 \\
             $T/2^{8}$ & 2.75e-06 & 1.79e-06 & 4.73e-07 & 1.75e-07 \\
             $T/2^{9}$ & 1.28e-06 & 8.36e-07 & 2.21e-07 & 8.16e-08 \\
             $T/2^{10}$ & 5.50e-07 & 3.58e-07 & 9.46e-08 & 3.50e-08 \\
             \hline
             Order &  1.07 & 1.07 & 1.07 & 1.07\\
             \hline
        \end{tabular}
        \label{tab.time1d}
\end{table}

\begin{example}[Convergence rates -- two space dimensions]\rm 
We consider the two-dimensional domain  $\Omega=[0,1)^2$ and use the top-hat kernel \eqref{5.tophat} with $R=\frac18$, $\alpha_{ij}=-1$ for $i,j=1,2$, and $\kappa=0.01$. The initial data equal
\begin{align*}
  u_1^0(x,y) = \frac{0.1\kappa(\sin(2\pi(x-y))+1)
  }{\|\sin(2\pi(x-y))+1\|_{L^1(\Omega)}}, \quad
  u_2^0(x,y) = \frac{0.1\kappa(\cos(2\pi(x+y))+1)
  }{\|\cos(2\pi(x+y))+1\|_{L^1(\Omega)}}
\end{align*} 
for $(x,y)\in\Omega$. The end time is $T=0.01$ and the reference mesh sizes are $\Delta t=T/2^8$ and $\Delta x=2^{-8}$ (i.e., the mesh size in both directions equals $\Delta x_1=\Delta x_2=2^{-8}$). As in the previous example, Tables \ref{tab:conv2d_u1} confirms the second-order convergence in space and first-order convergence in time. Since the errors for $u_2$ are practically identical, only the results for $u_1$ are reported.
\end{example}

\begin{table}[htbp]
\centering
\caption{Two-dimensional convergence for $u_1$: spatial (left) and temporal (right) refinements, measured in $L^1$ and $L^{\infty}$ norms.}
\label{tab:conv2d_u1}
\setlength{\tabcolsep}{6pt}
\begin{tabular}{|c|cc|c|cc|}
\hline
\multicolumn{3}{|c|}{\textbf{Space refinement} ($\Delta t = T/2^8$)} &
\multicolumn{3}{c|}{\textbf{Time refinement} ($\Delta x = 2^{-8}$)} \\
\hline
$\Delta x$ & $L^{\infty}$ error & $L^1$ error &
$\Delta t$ & $L^{\infty}$ error & $L^1$ error \\
\hline
$2^{-4}$ & 1.25e-05 & 7.85e-06 & $T/2^{3}$ & 4.19e-09 & 2.34e-09 \\
$2^{-5}$ & 3.11e-06 & 1.96e-06 & $T/2^{4}$ & 2.03e-09 & 1.13e-09 \\
$2^{-6}$ & 7.46e-07 & 4.64e-07 & $T/2^{5}$ & 9.47e-10 & 5.30e-10 \\
$2^{-7}$ & 1.54e-07 & 9.20e-08 & $T/2^{6}$ & 4.06e-10 & 2.27e-10 \\
\hline
Order & 2.11 & 2.13 & Order & 1.12 & 1.12 \\
\hline
\end{tabular}
\end{table}

\subsection{Some model features}

We present some simulations for the attractive case in the one-dimensional interval $\Omega=[-L, L)$ with $L=10$, using the top-hat kernel \eqref{5.tophat}. The time step size is chosen as $\Delta t=\Delta x$. Our aim is to discuss the choice of the weight function $B$ and the parameters $\kappa$ in \eqref{2.flux} and $\eps$ in \eqref{5.gauss}.

\begin{example}[Choice of weight function]\rm 
We compare the classical Bernoulli weight $B(s)=s/(\mathrm{e}^s-1)$ with a (scaled) sigmoid weight
\[
  B(s)=\frac{2}{\mathrm{e}^s+1},\qquad s\in\mathbb{R}.
\]
Both choices satisfy Assumptions~(H2)–(H3) used to define the generalized Scharfetter--Gummel flux (in particular, it holds that $B(|s|)\ge 1-\alpha|s|$ for some $0\le\alpha<1$). 
We use the top-hat kernel \eqref{5.tophat} with the values $R=1$,  $\alpha_{11}=-20$, $\alpha_{22}=-2$, $\alpha_{12}=\alpha_{21}=10$ (self-attractive and cross-repulsive interactions). The parameters are $\Delta t = \Delta x = 1/25$, $\kappa=0.25$, and the initial data is $u_1^0=u_2^0=\frac18\mathrm{1}_{[-4,4]}$. Figure~\ref{fig.sigmoid} shows $(u_1,u_2)$ at three times. Consistently with the theory, the two weights yield the same qualitative dynamics and nearly identical profiles, confirming that the analysis applies to a broad class of weight functions beyond the classical Bernoulli case. From an implementation viewpoint, the continuous sigmoid avoids the special-case handling at $s=0$ required by the Bernoulli weight.
\end{example}

\begin{figure}[ht]
\includegraphics[width=0.32\linewidth]{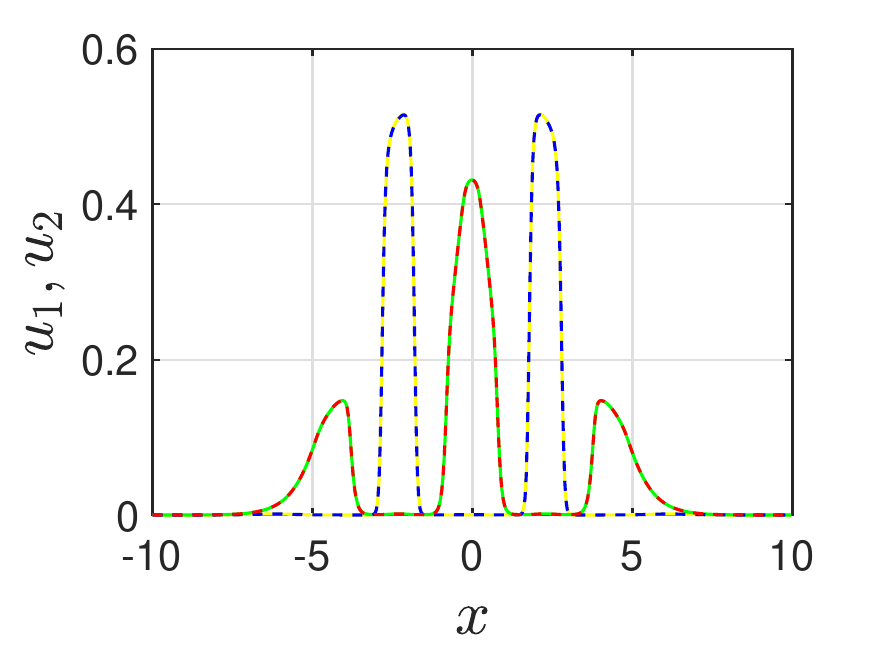}
\includegraphics[width=0.32\linewidth]{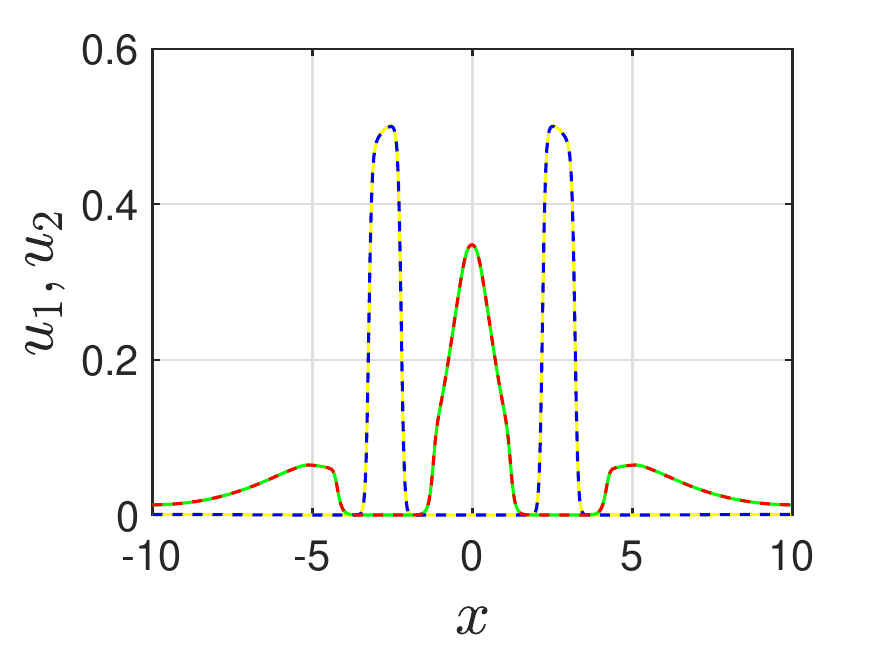}
\includegraphics[width=0.32\linewidth]{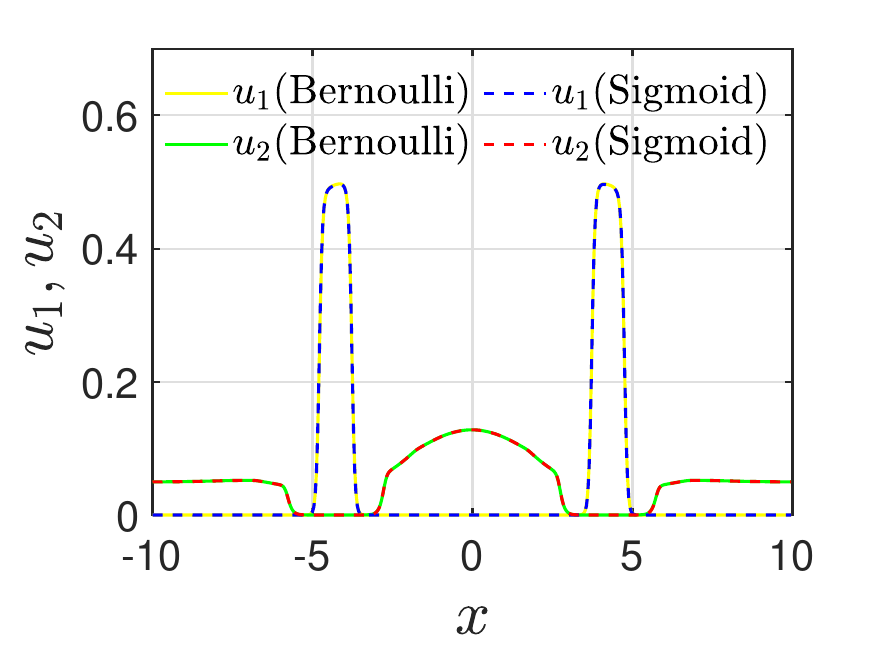}
\caption{Solution profiles using the Bernoulli and Sigmoid weight functions at times $t=3.4$ (left), $t=22.4$ (middle), $t=160$ (right), with self-attractive/cross-repulsive parameters $(-20,-2,10,10)$.}
\label{fig.sigmoid}
\end{figure}

\begin{example}[Robustness with respect to $\kappa$]\rm
A key advantage of the Scharfetter--Gummel discretization is its robustness for small diffusion coefficients~$\kappa$. To illustrate this property, we consider a system with the top-hat kernel \eqref{5.tophat} using $R=1$, $\alpha_{11}=-5$, $\alpha_{22}=-2$, $\alpha_{12}=\alpha_{21}=15$ (self-attractive and cross-repulsive interactions) and the initial data $u_1^0 = u_2^0 = \frac12\mathrm{1}_{[-1,1]}$. The mesh parameters are $\Delta t = \Delta x = 0.05$. Figure~\ref{fig.kappa} displays the stationary profiles at $t=200$ for different values of $\kappa$. As expected, diffusion counteracts segregation: For larger $\kappa$, the supports overlap more, whereas for smaller $\kappa$ the interfaces sharpen and the overlap decreases. Importantly, even for very small diffusion ($\kappa=5\times10^{-3}$), the scheme remains stable and positivity-preserving; no spurious oscillations or overshoots are observed.
\end{example}

\begin{figure}[ht]
\includegraphics[width=0.32\linewidth]{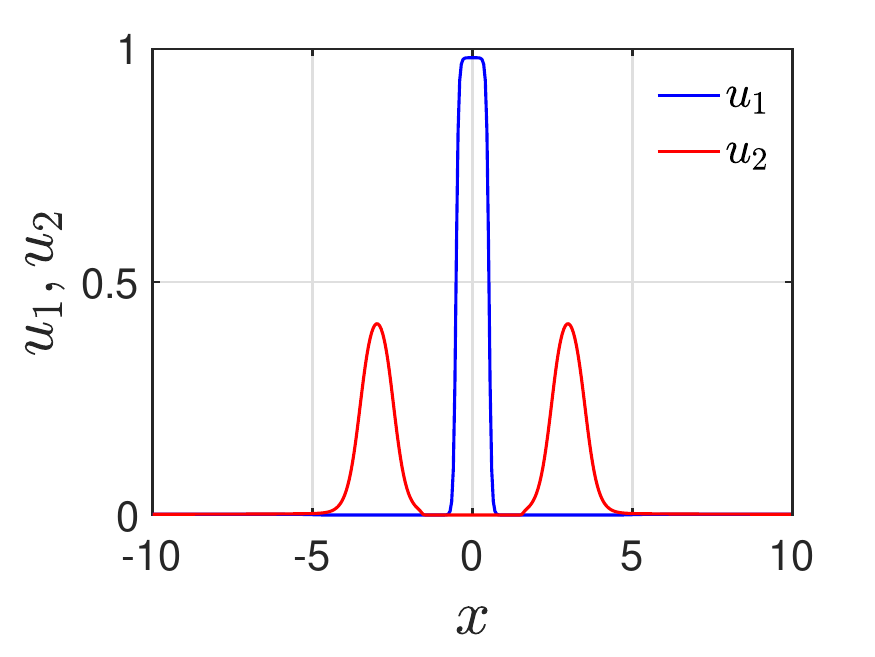}
\includegraphics[width=0.32\linewidth]{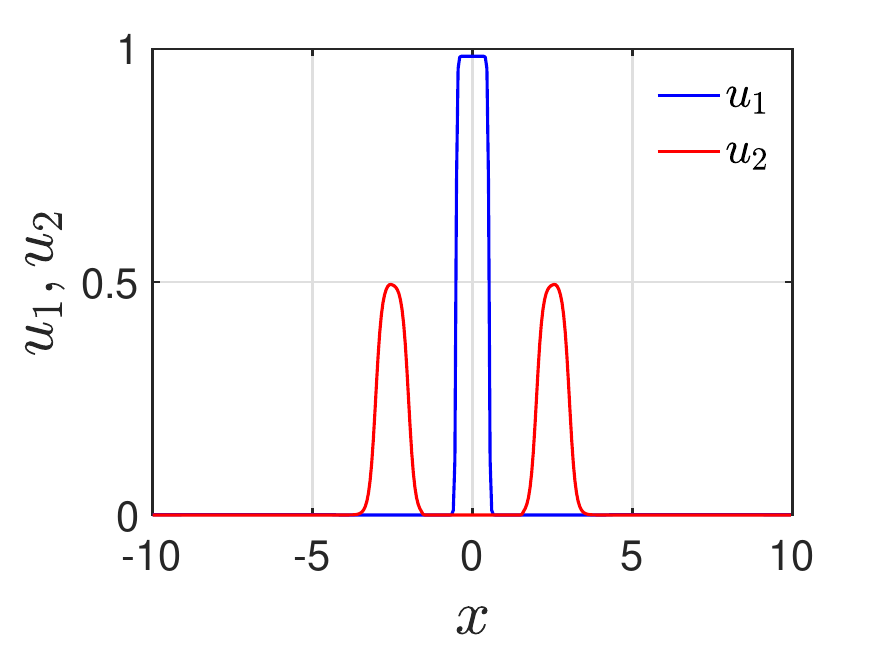}
\includegraphics[width=0.32\linewidth]{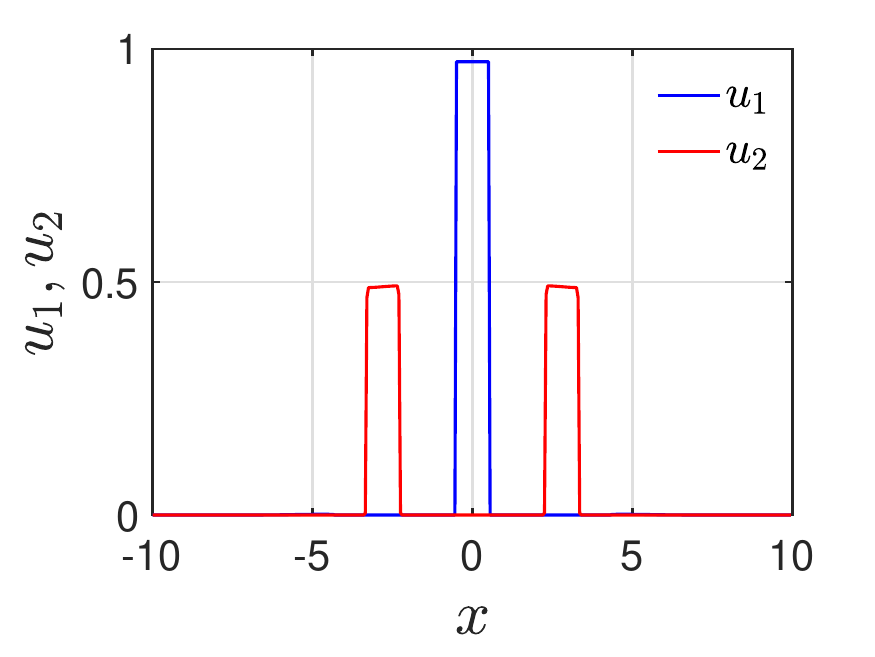}
\caption{Robustness of the scheme for small $\kappa$: Solution profiles at time $t=200$ for $\kappa = 0.1$ (left), $\kappa=0.05$ (middle), $\kappa=0.005$ (right).}
\label{fig.kappa}
\end{figure}

\begin{figure}
\centering
\includegraphics[width=0.32\linewidth]{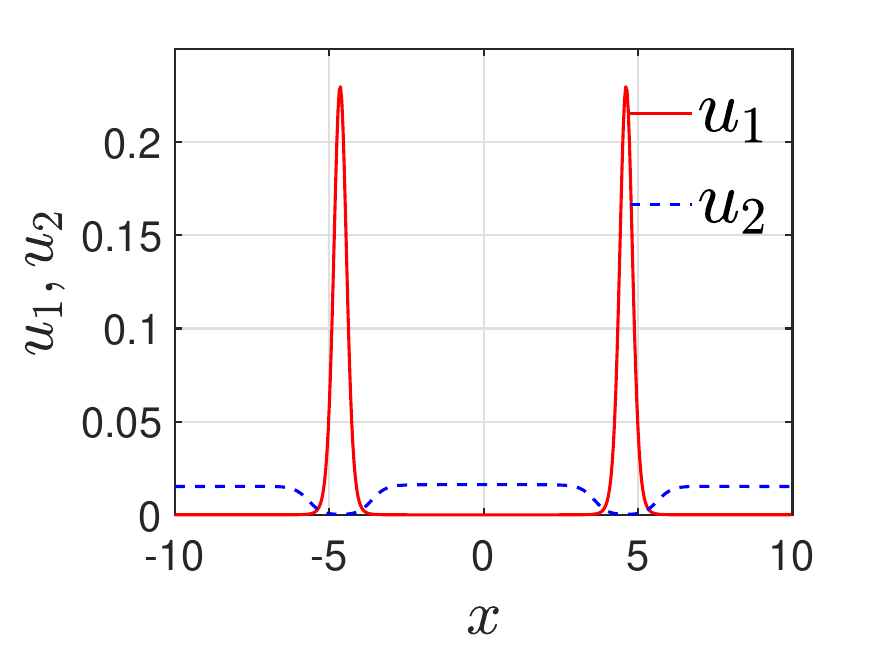}
\includegraphics[width=0.32\linewidth]{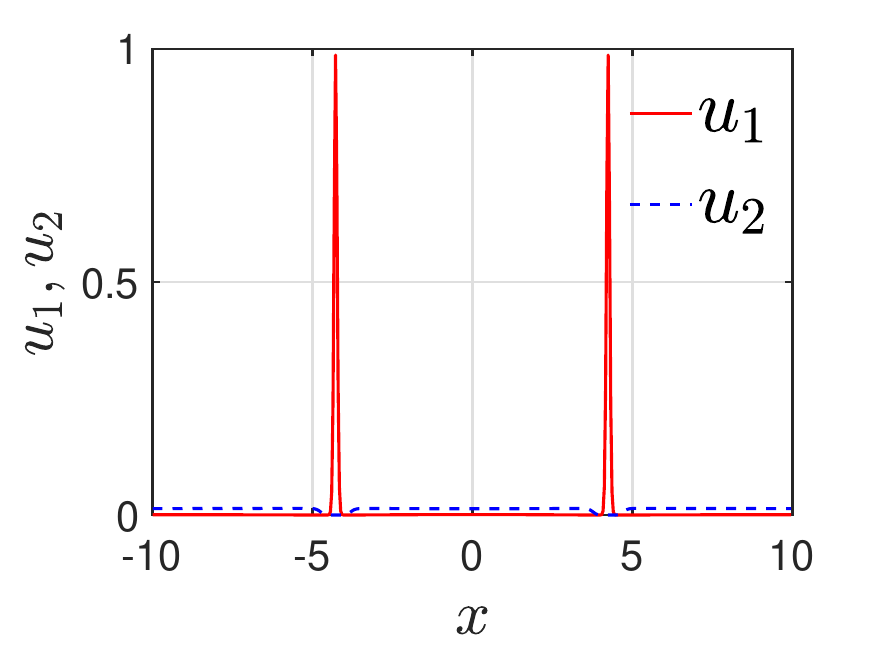}
\includegraphics[width=0.32\linewidth]{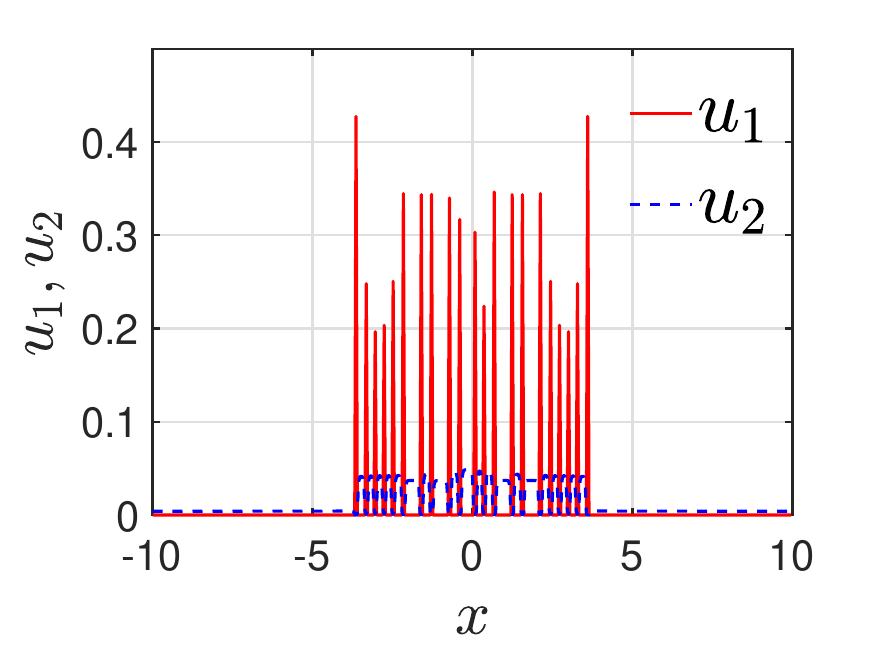}
\caption{Variation of $\eps$: Solution profiles at $t=300$ using the periodically extended Gaussian kernel for $\eps=0.5$ (left), $\eps=0.2$ (middle), and $\eps=0.02$ (right). The simulation breaks down when $\eps=0.02$.}
\label{fig.blowup}
\end{figure}

\begin{example}[Choice of $\eps$]\rm
We discuss the limit $\eps\to 0$ in the one-dimensional setting with the Gaussian kernel \eqref{5.gauss}, periodically extended over the whole line, and with the coefficients $\alpha_{11} = 20$, $\alpha_{22} = 2$, $\alpha_{12} = \alpha_{21} = -10$ (self-repulsion, cross-attraction) and $\kappa=0.01$. The initial data is \( u_1^0 = u_2^0 = \frac{1}{32}\mathrm{1}_{[-4,4]} \). We expect that the nonlocal equations converge to the local ones \cite[Theorem 5]{JPZ22},
\begin{align*}
  \pa_t u_i + \diver(u_i\na p_i(u)) = 0, \quad 
  p_i(u) = \alpha_{i1}u_1+\alpha_{i2}u_2, \quad i=1,2.
\end{align*}
The local system is solvable only if the matrix $(\alpha_{ij})$ is positive definite, which is not the case in the present example. 
As a consequence, the numerical simulations are expected to break down if $\eps$ becomes too small. This expectation is confirmed in Figure \ref{fig.blowup}.
\end{example} 

\subsection{Evolution of the entropies}

We study the evolution of the Boltzmann and Rao entropies $H_B$ and $H_R$, respectively (see \eqref{2.HBHR} for the definitions). In one space dimension, we choose $\Omega=[-L, L)$ and the initial data
\begin{align*}
  u_1^0(x) = \mathrm{1}_{[-L/8,L/8]}(x), \quad
  u_2^0(x) = \mathrm{1}_{[L/8,3L/4]}(x) \quad\mbox{for }x\in\Omega,
\end{align*}
while in two space dimensions, we use $\Omega=[-L, L)^2$ and
\begin{align*}
  u_1^0 = 0.1\cdot\mathrm{1}_{[3L/8,5L/8]\times[L/2,3L/4]}, \quad
  u_2^0 = 0.1\cdot\mathrm{1}_{[3L/8,5L/8]\times[L/4,L/2]} \quad
  \mbox{for }(x,y)\in\Omega.
\end{align*}

\begin{figure}[ht]
\includegraphics[width=0.32\linewidth]{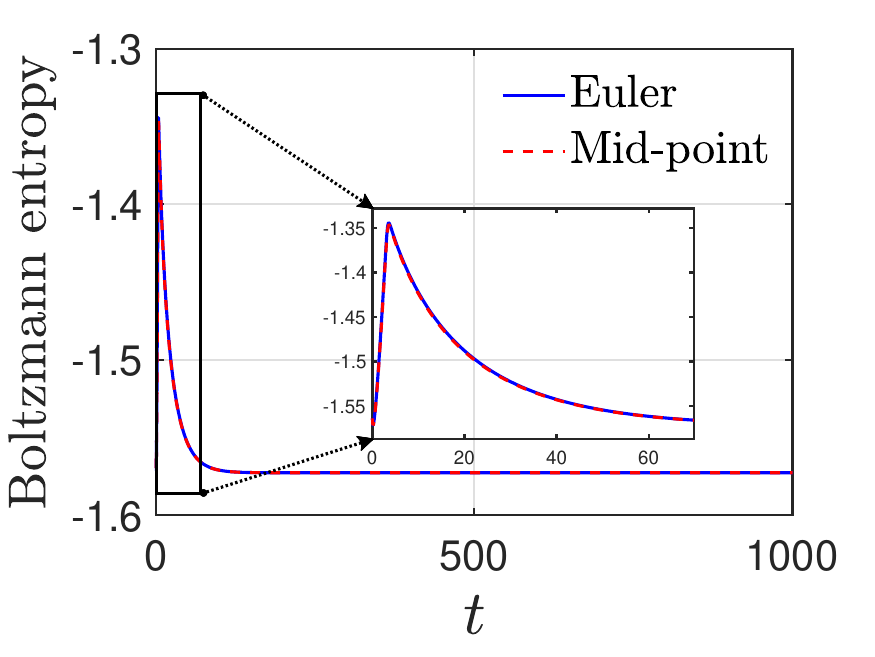}
\includegraphics[width=0.32\linewidth]{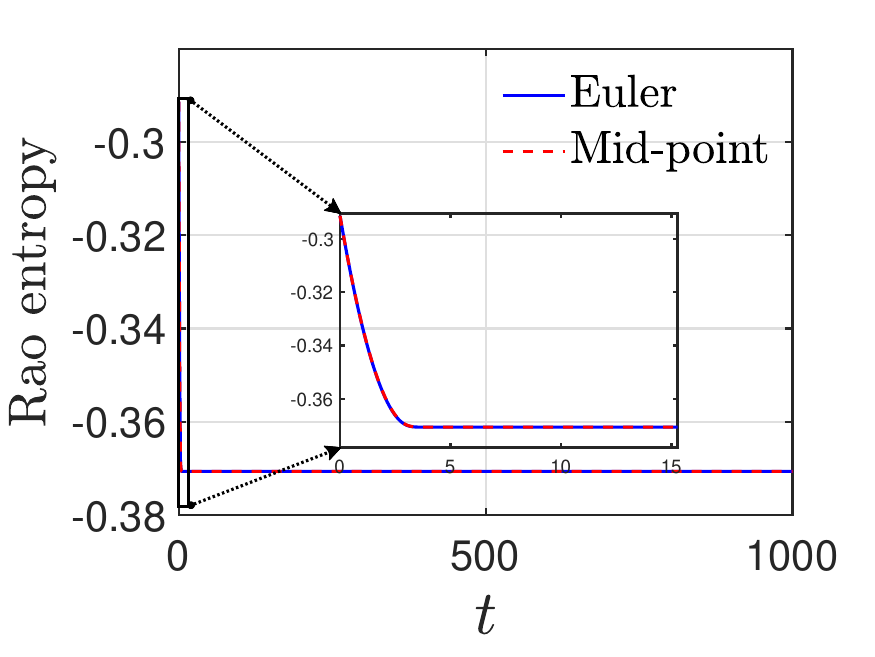}
\includegraphics[width=0.32\linewidth]{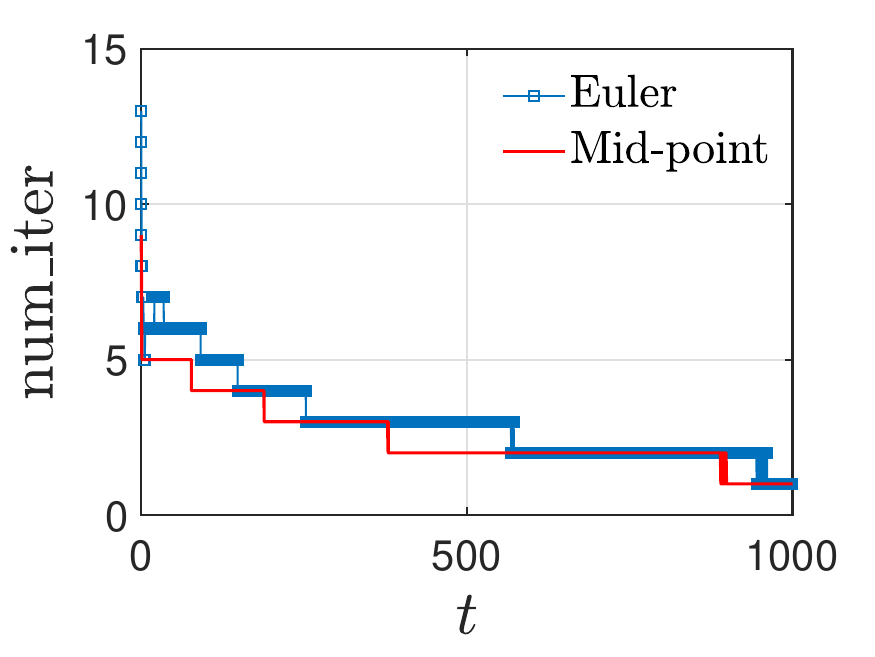}
\caption{Attractive interactions with the top-hat kernel: evolution of the Boltzmann entropy (left) and Rao entropy (middle) as well as the number of iterations (right).}
\label{fig.1Datt}
\end{figure}

\begin{figure}[ht]
\includegraphics[width=0.49\linewidth]{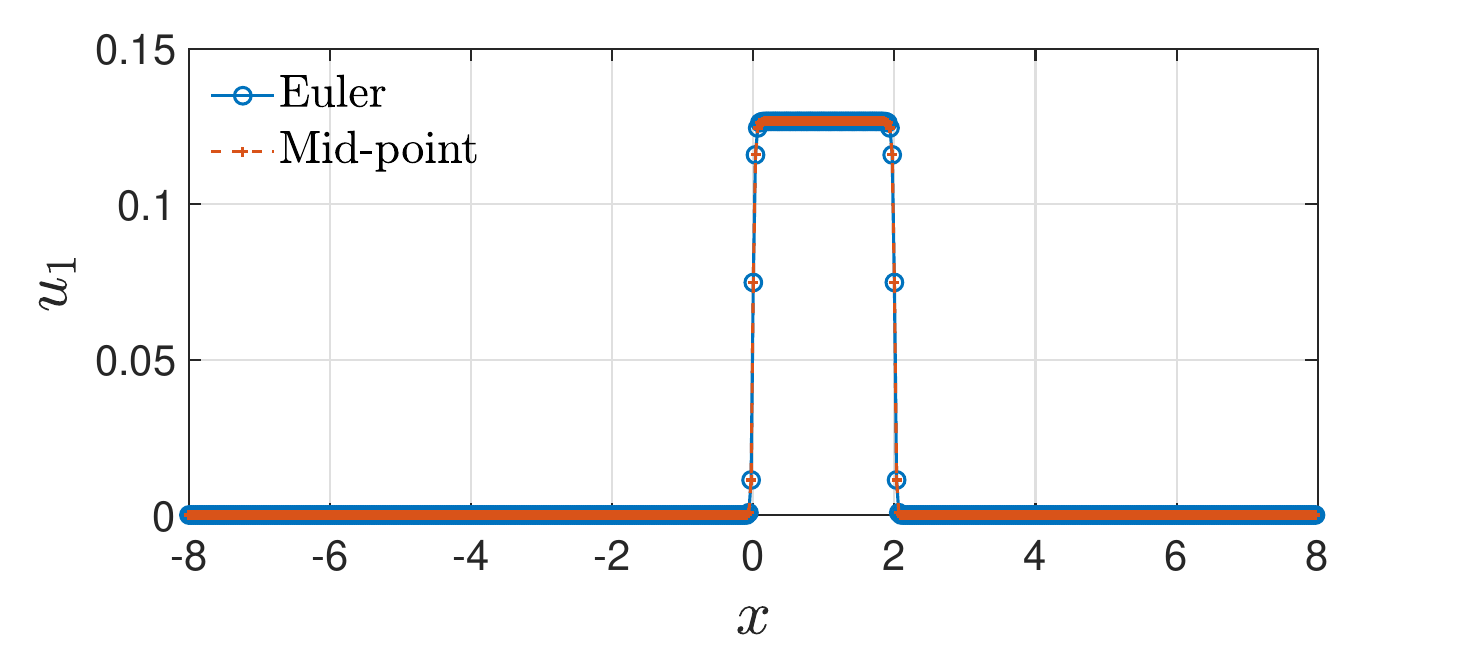}
\includegraphics[width=0.49\linewidth]{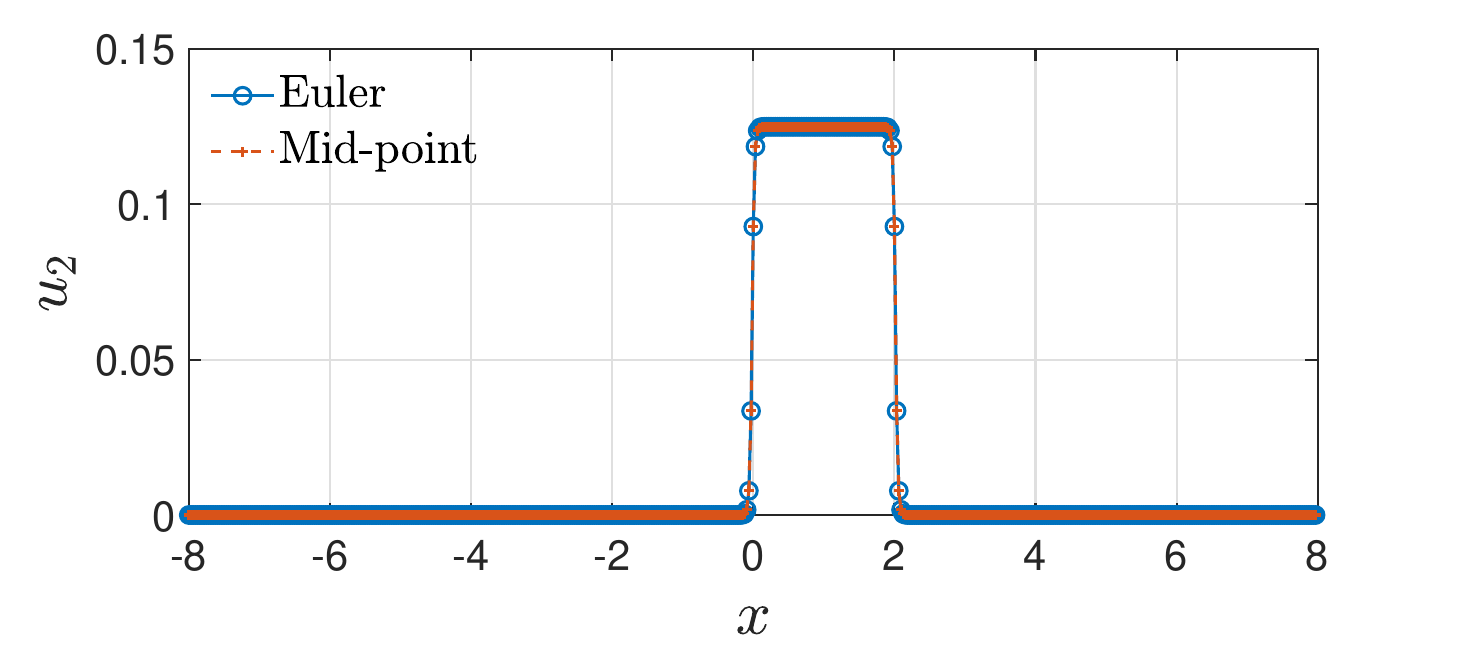}
\caption{Attractive interactions with the top-hat kernel: Solution profiles $u_1$ (left) and $u_2$ (right).}
\label{fig.1Ddens}
\end{figure}

\begin{example}[One-dimensional attractive interactions]\rm 
We choose the top-hat kernel \eqref{5.tophat} with $R=2$ and attractive interactions, $\alpha_{11}=-20$, $\alpha_{22}=-6$, $\alpha_{12}=\alpha_{21}=-10$, and $\kappa=0.01$. The numerical parameters are $T=1000$, $L=8$, and $\Delta t=\Delta x=L/2^8$. We use both the implicit Euler and mid-point schemes for the discrete potentials and compare the evolution of the Boltzmann and Rao entropies; see Figure \ref{fig.1Datt}. We see that the Boltzmann entropy increases initially but decreases for all larger times, while the Rao entropy decays for all times. The mid-point scheme needs fewer iterations than the implicit Euler method, but both schemes produce almost the same solution. However, when we decrease the detection radius to $R=0.6$, we observe that with the mid-point rule, the Rao entropy decays, but the implicit Euler method fails to converge unless the time step size is decreased. This test illustrates that the mid-point rule is preferable in the case of attractive interactions. The corresponding solution profiles are reported in Figure \ref{fig.1Ddens}. 
\end{example}

\begin{example}[One-dimensional repulsive interactions]\rm
We consider repulsive interactions with the Gaussian kernel \eqref{5.gauss} with $\eps=1$, $\alpha_{11}=10$, $\alpha_{22}=3$, $\alpha_{12}=\alpha_{21}=5$, and $\kappa=0.01$. In this example, both the Boltzmann and Rao entropies decay, using the implicit Euler or mid-point scheme (see Figure \ref{fig.1Drep}). When the whole-space Gaussian is used with periodic boundary condition, a boundary layer appears near the domain boundary; this layer disappears when a periodic Gaussian extension is employed.
\end{example}

\begin{figure}[ht]
\includegraphics[width=0.32\linewidth]{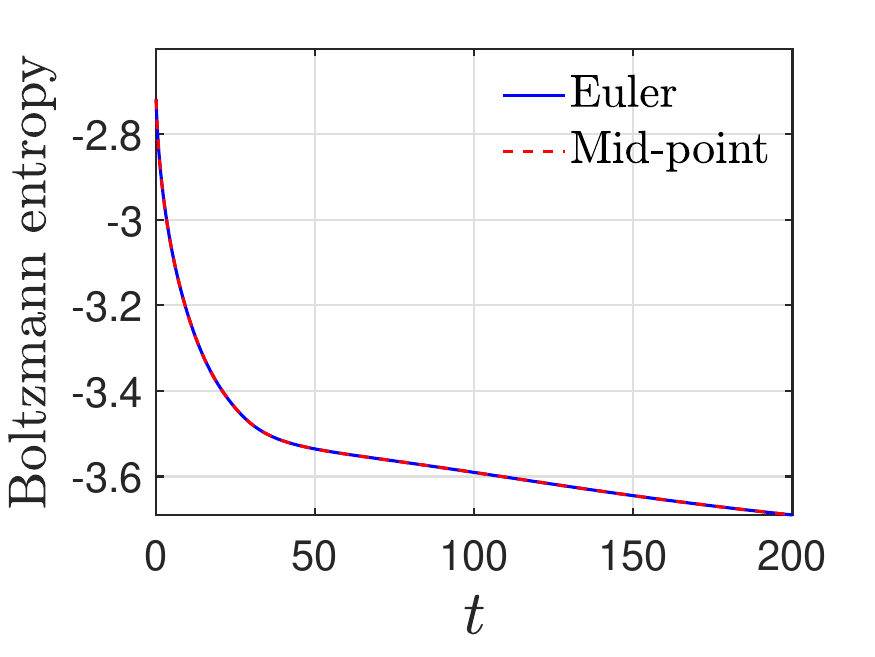}
\includegraphics[width=0.32\linewidth]{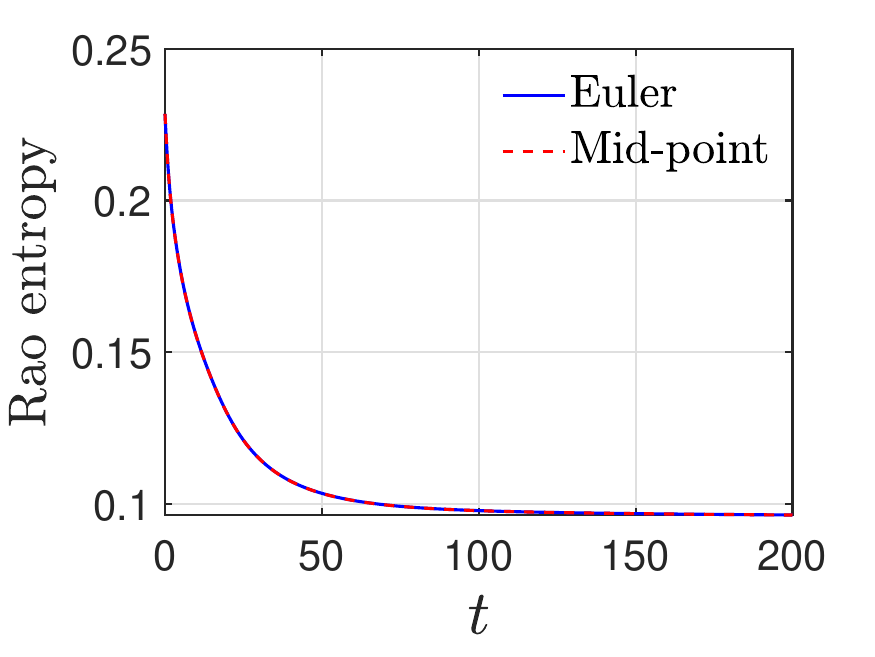}
\includegraphics[width=0.32\linewidth]{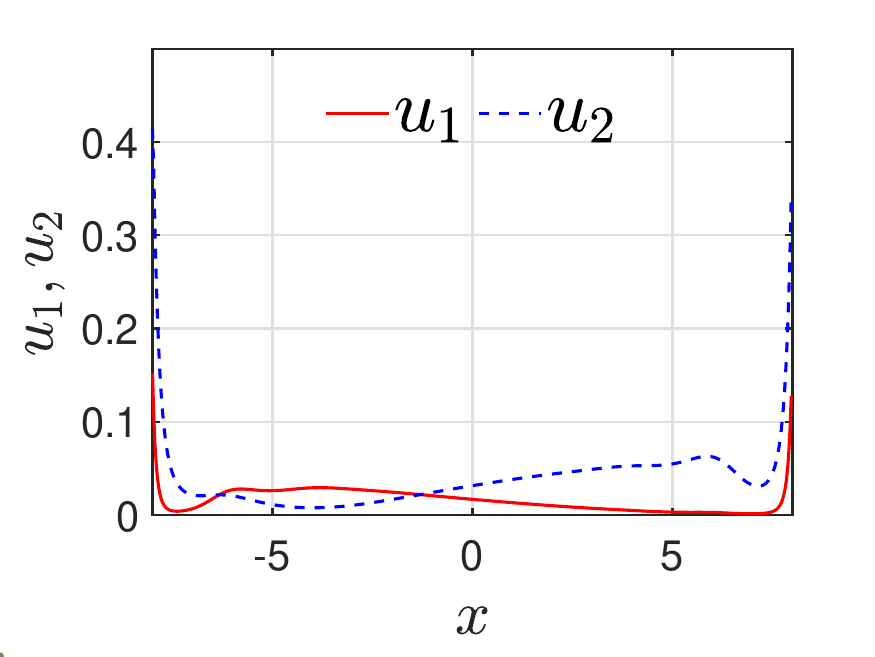}
\caption{Repulsive interactions with the Gaussian kernel: Evolution of the Boltzmann entropy (left), Rao entropy (middle), and solution profiles (right).}
\label{fig.1Drep}
\end{figure}

\begin{example}[Two-dimensional repulsive and attractive interactions]\rm 
We consider the top-hat kernel \eqref{5.tophat} for both repulsive and attractive interactions. The domain is $\Omega=[0, 1)^2$, and the mesh size is $\Delta t=\Delta x=0.01$. The interaction parameters are $\kappa=0.01$ and 
\begin{align*}
  &\mbox{repulsive interactions:} && \alpha_{11}=10,\ \alpha_{22}=6,
  \ \alpha_{12}=\alpha_{21}=5, \\
  &\mbox{self-attractive interactions:} && \alpha_{11}=-10,\ \alpha_{22}=-6,\ \alpha_{12}=\alpha_{21}=5.
\end{align*}
Similarly as in the previous example, Figure \ref{fig.ent2d} shows that the Boltzmann and Rao entropies are decreasing in time for repulsive interactions, while for the self-attractive case, the Boltzmann entropy increases initially and the Rao entropy is decreasing. In the former case, the system almost reaches its steady state at $T=30$, but the solutions in the latter case are still not stationary at $T=50$, i.e., the relative change of the entropies is 2.40e-05 (Boltzmann entropy) and 8.25e-05 (Rao entropy).  
\end{example}

\begin{figure}[ht]
\includegraphics[width=0.35\linewidth]{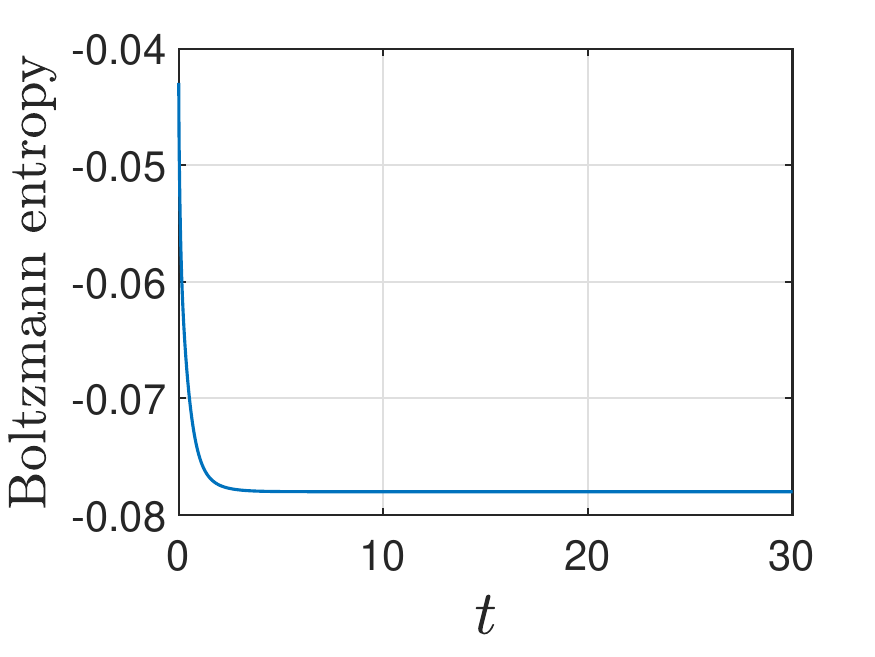}
\includegraphics[width=0.35\linewidth]{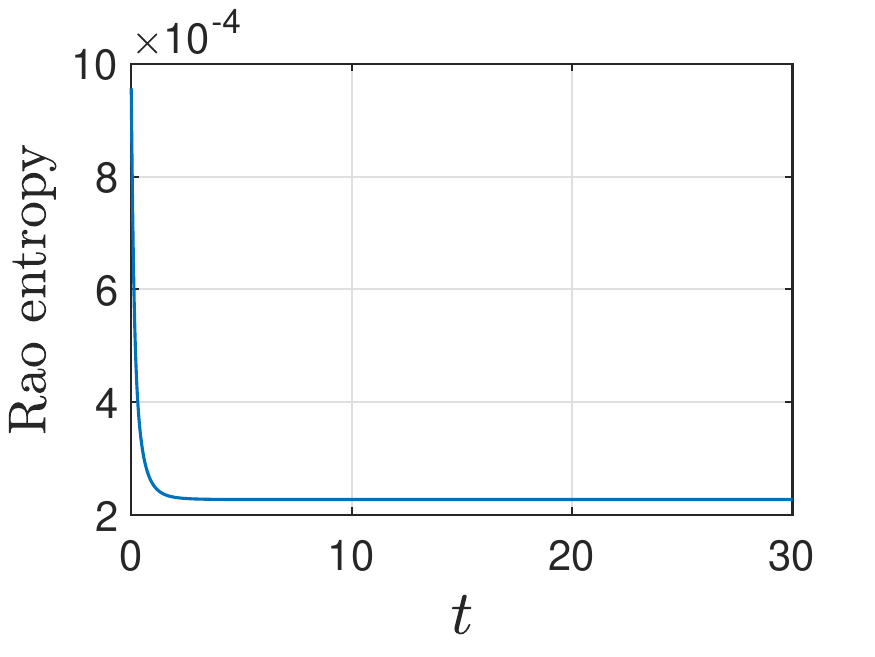}
\includegraphics[width=0.35\linewidth]{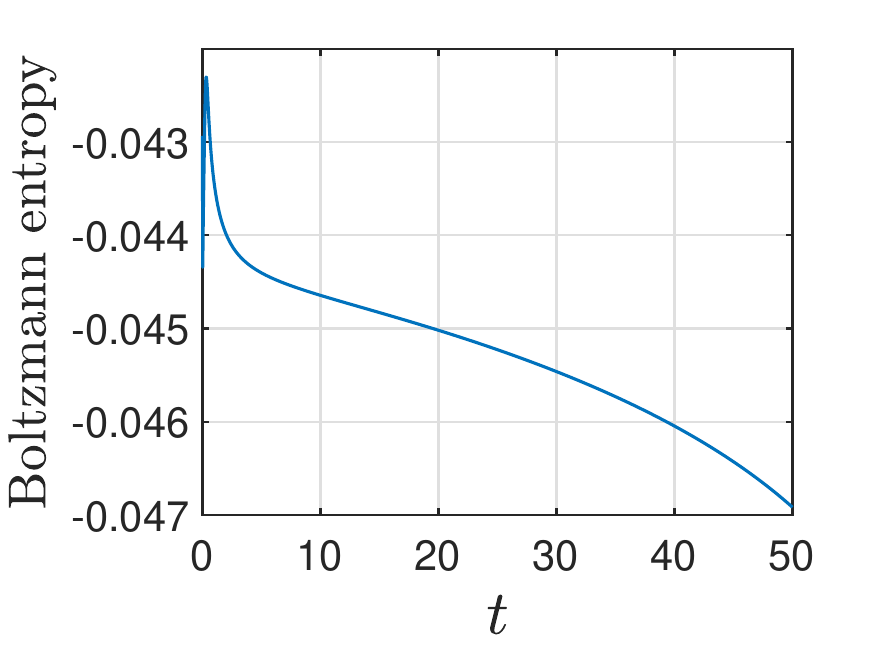}
\includegraphics[width=0.35\linewidth]{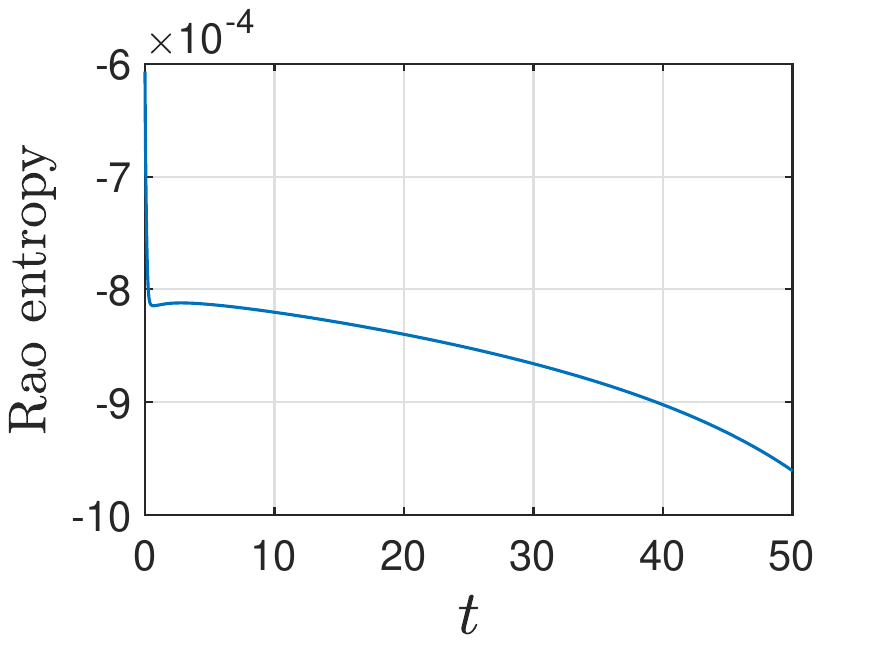}
\caption{Evolutions of the Boltzmann entropy (left) and Rao entropy (right) for repulsive systems (top) and self-attractive systems (bottom).}
\label{fig.ent2d}
\end{figure}

\subsection{Strong repulsive interactions}

We represent strong interactions by kernel functions with large coefficients:
\begin{align}\label{5.alpha}
  \alpha_{11} = 20, \quad \alpha_{22} = 2, \quad
  \alpha_{12} = \alpha_{21}=10.
\end{align}
The parameters are $\kappa=0.25$, $L=10$, and $\Delta t=\Delta x = 1/25$. We choose the Gaussian kernel with $\eps=1$ and the initial data $u_1^0=u_2^0 = \frac18\mathrm{1}_{[-4,4]}$. As expected, the Boltzmann and Rao entropies are decreasing in time (not shown). Figure \ref{fig.sdens} (top row) illustrates the solution profiles at various times in one space dimension. Since the kernel is not periodic, a jump at the boundary is introduced, which produces a boundary layer. The boundary layer does not appear when a truncated Gaussian kernel with periodic extension is employed (not shown). We notice that the discrete mass is conserved over time, also in presence of boundary layers.

When we use the top-hat kernel \eqref{5.tophat} with $R=1$ and the coefficients \eqref{5.alpha}, small-scale oscillations appear; see Figure \ref{fig.sdens} (bottom row). Such oscillations are also observed in \cite[Figure 4]{CSS24} when self-repulsion is large. The wavelength of the oscillations is related to the detection radius $R$. Indeed, when the radius is doubled to $R=2$, the wavelength doubles too; see Figure \ref{fig.sosc2}. For larger diffusion coefficient $\kappa=0.5$, the small-scale oscillations disappear as $t\to\infty$, and the system converges to the constant steady state, which is consistent with the observations of \cite[Sec.~5.1]{CSS24}. These results indicate that the presence of small-scale oscillations is jointly determined by both the diffusion constant and the detection radius. 

\begin{figure}[ht]
\includegraphics[width=0.32\linewidth]{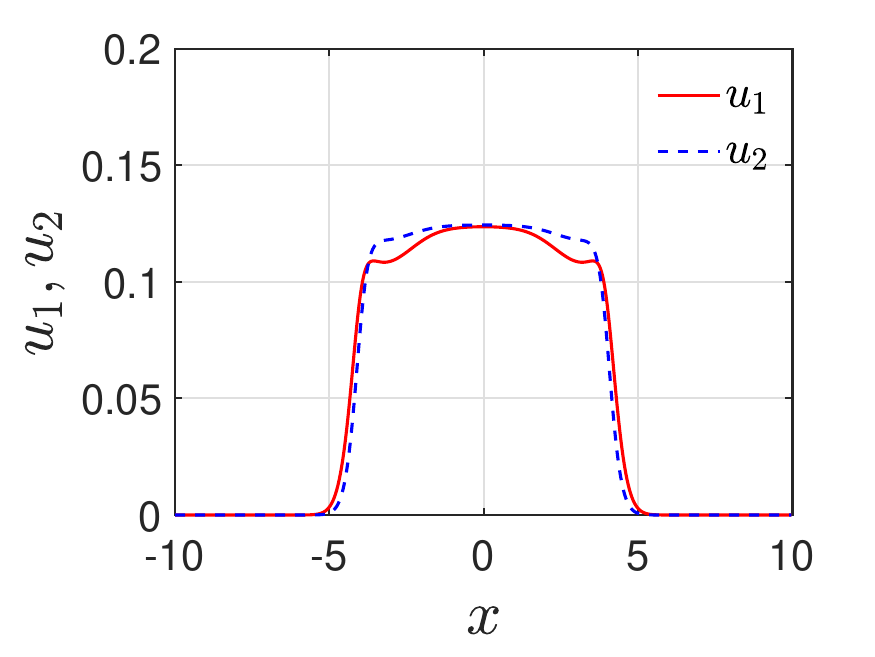}
\includegraphics[width=0.32\linewidth]{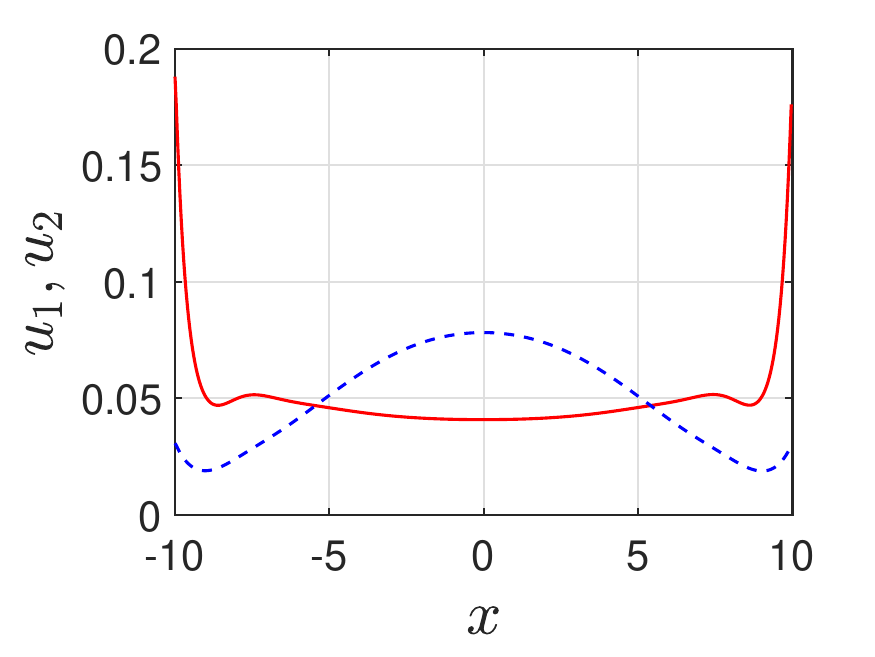}
\includegraphics[width=0.32\linewidth]{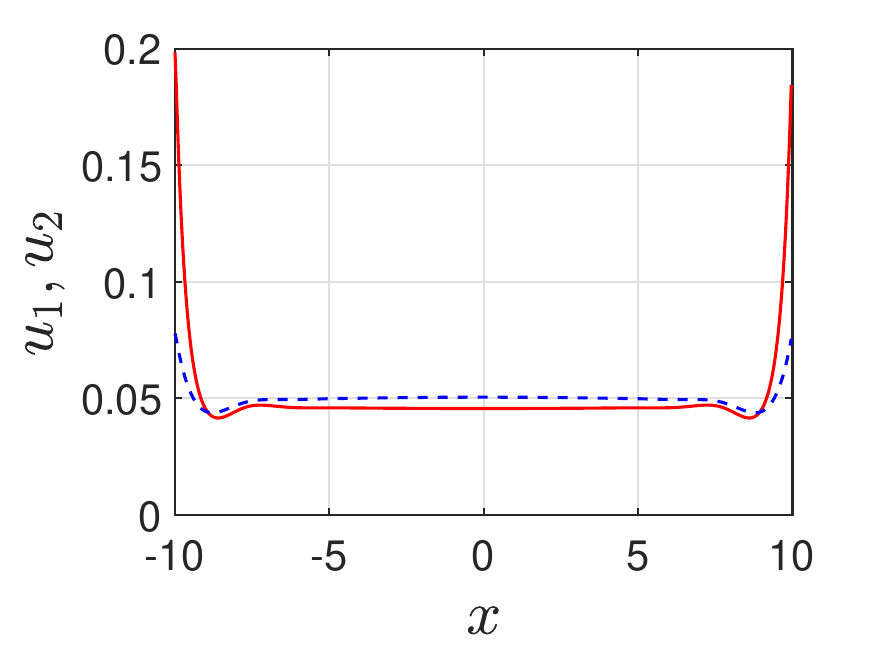}
\includegraphics[width=0.32\linewidth]{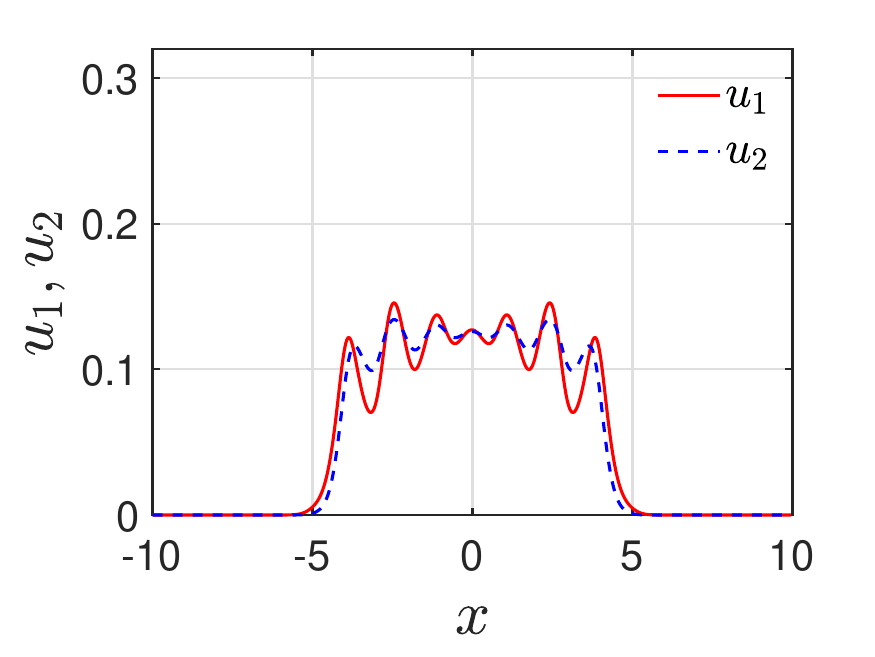}
\includegraphics[width=0.32\linewidth]{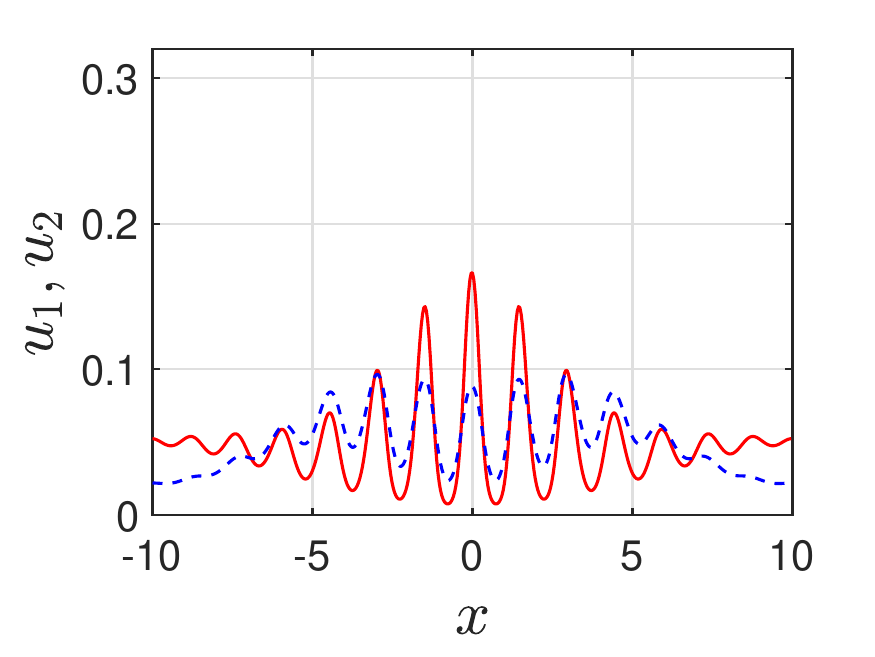}
\includegraphics[width=0.32\linewidth]{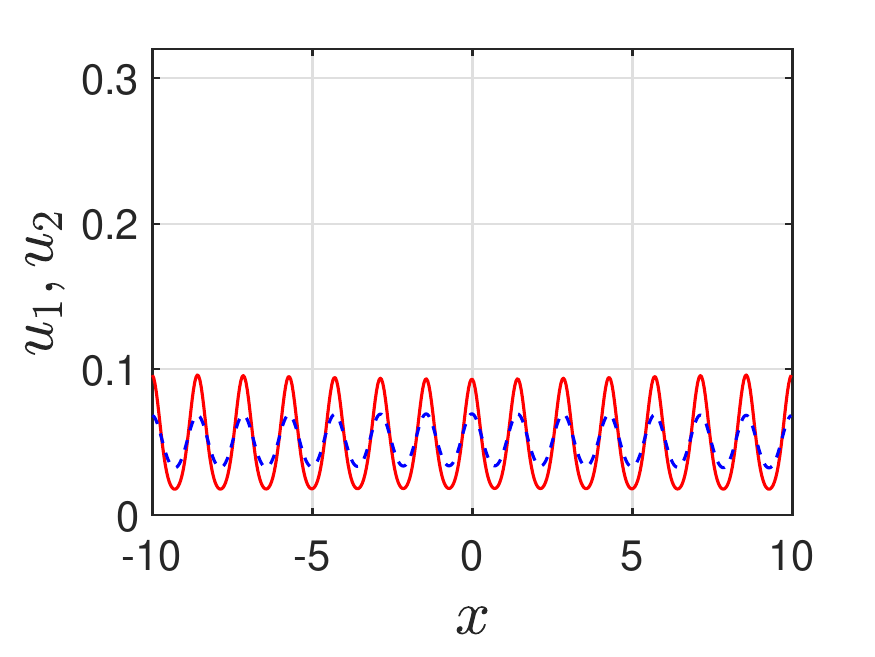}
\caption{Strong repulsive interactions: Solution profiles at times $t=0.2$ (left), $t=22.4$ (middle), and $t=300$ (right) using the Gaussian kernel (top row) and the top-hat kernel (bottom row).}
\label{fig.sdens}
\end{figure}

\begin{figure}[ht]
\includegraphics[width=0.32\linewidth]{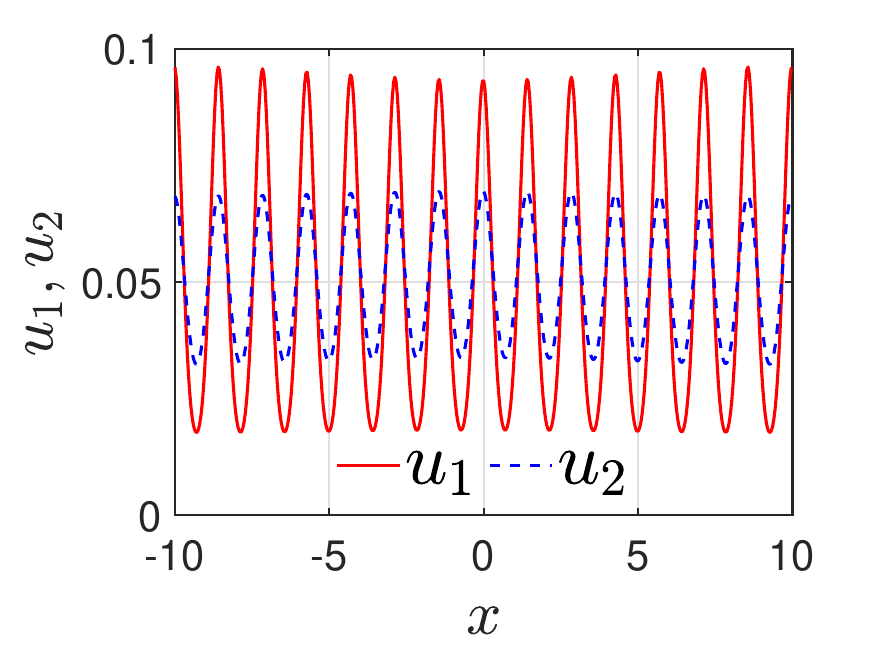}
\includegraphics[width=0.32\linewidth]{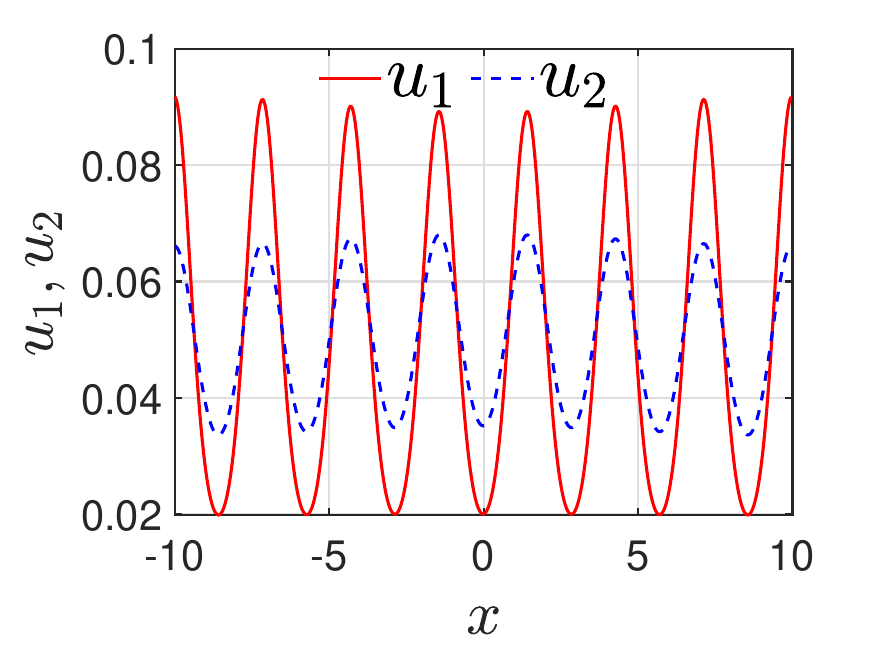}
\includegraphics[width=0.32\linewidth]{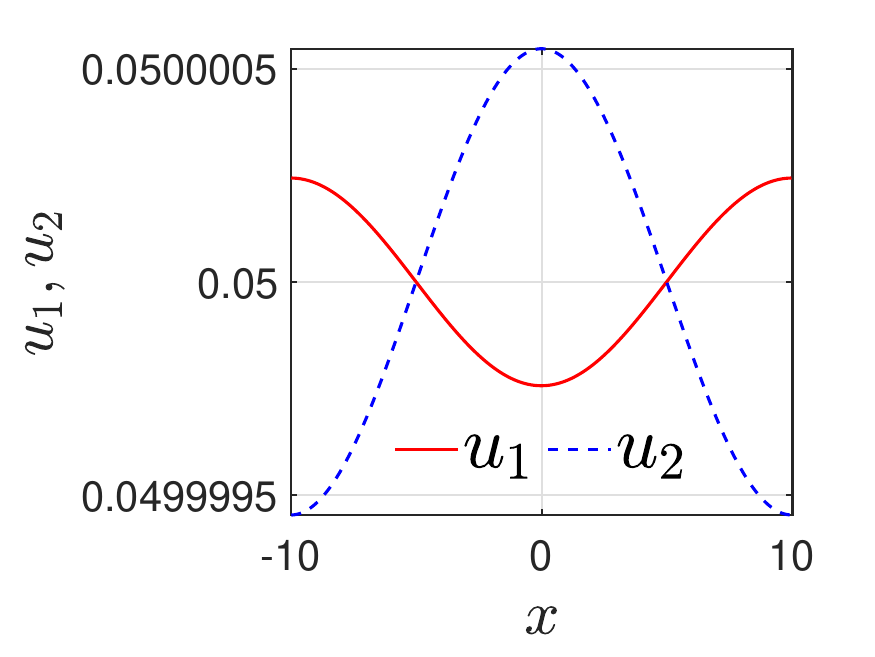}
\caption{Strong repulsive interactions: Solution profiles at $t=300$ using the top-hat kernel with $R=1$, $\kappa=0.25$ (left), $R=2$, $\kappa=0.25$ (middle), and $R=1$, $\kappa=0.5$ (right).}
\label{fig.sosc2}
\end{figure}

Finally, we present some simulations in the two-dimensional domain $\Omega = (0,1)^2$ using the top-hat kernel with $R=1$ and the coefficients \eqref{5.alpha}. We choose $\kappa=0.01$, the initial data
\begin{align*}
  u_1^0 = 0.1\cdot\mathrm{1}_{[1/2,3/4]\times[3/8,5/8]}, \quad
  u_2^0 = 0.1\cdot\mathrm{1}_{[1/4,1/2]\times[3/8,5/8]},
\end{align*}
and $\Delta t=\Delta x = 0.01$. The intersection of the supports of $u_1^0$ and $u_2^0$ equals the line $\{1/2\}\times[3/8,5/8]$, which means that the species are initially almost segregated. The solution profiles at various times are illustrated in Figure \ref{fig.sdens2d}. Again, we observe small-scale oscillations. The Boltzmann and Rao entropies, illustrated in Figure \ref{fig.sent2d}, are not monotonous in this example. The reason is that \eqref{1.posdef} is not satisfied, so we cannot expect entropy decay. 

\begin{figure}[ht]
\includegraphics[width=0.32\linewidth]{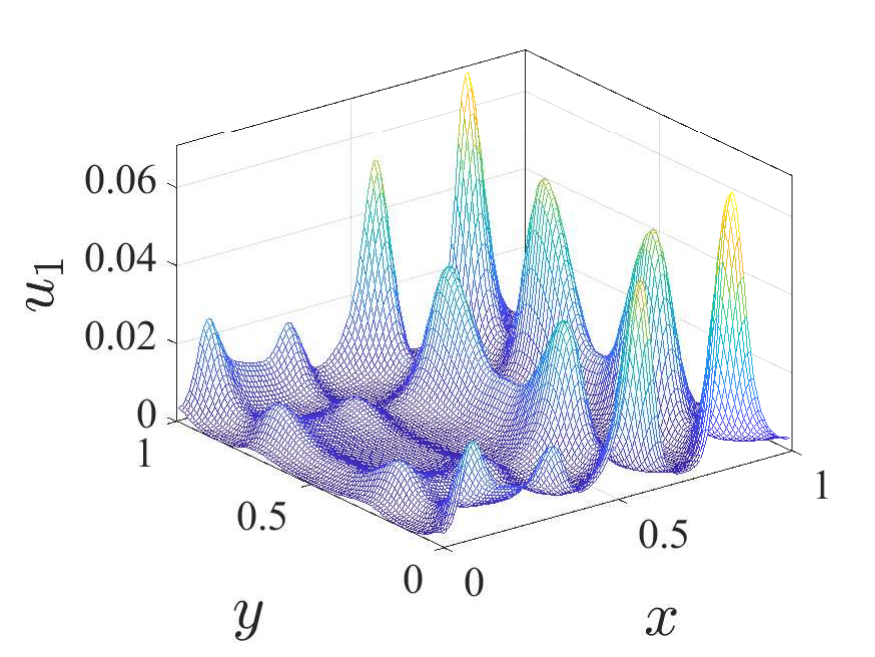}
\includegraphics[width=0.32\linewidth]{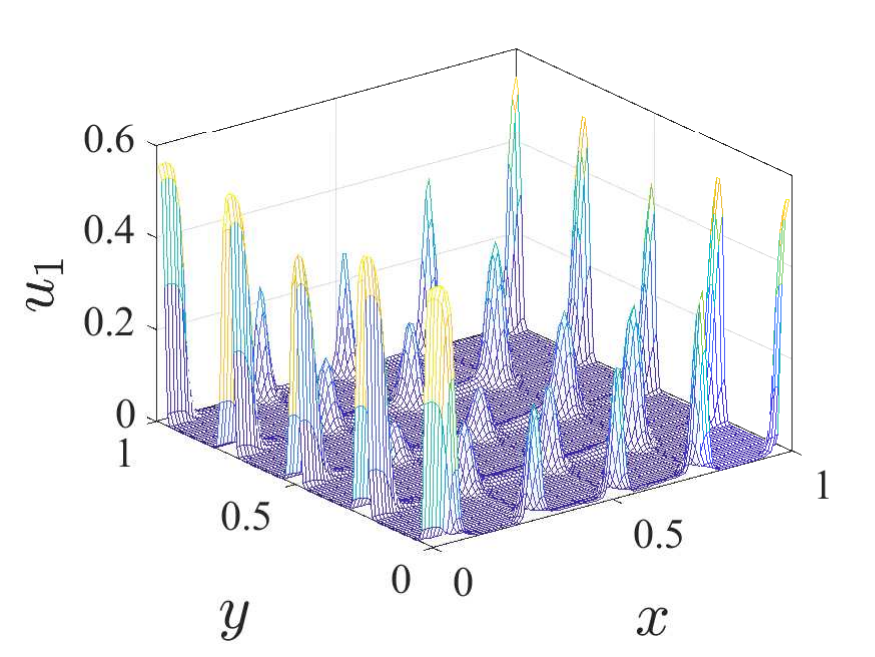}
\includegraphics[width=0.32\linewidth]{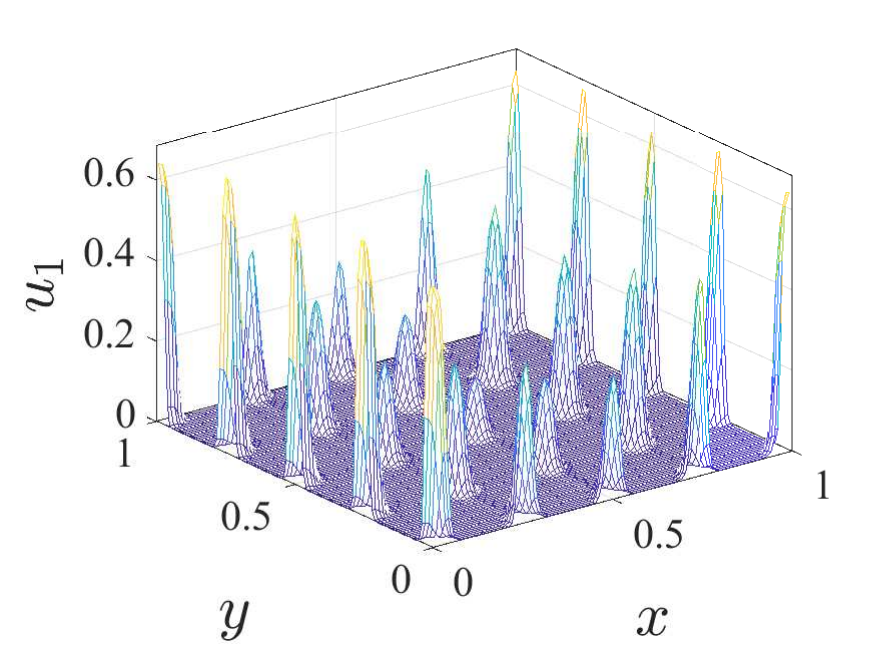}
\includegraphics[width=0.32\linewidth]{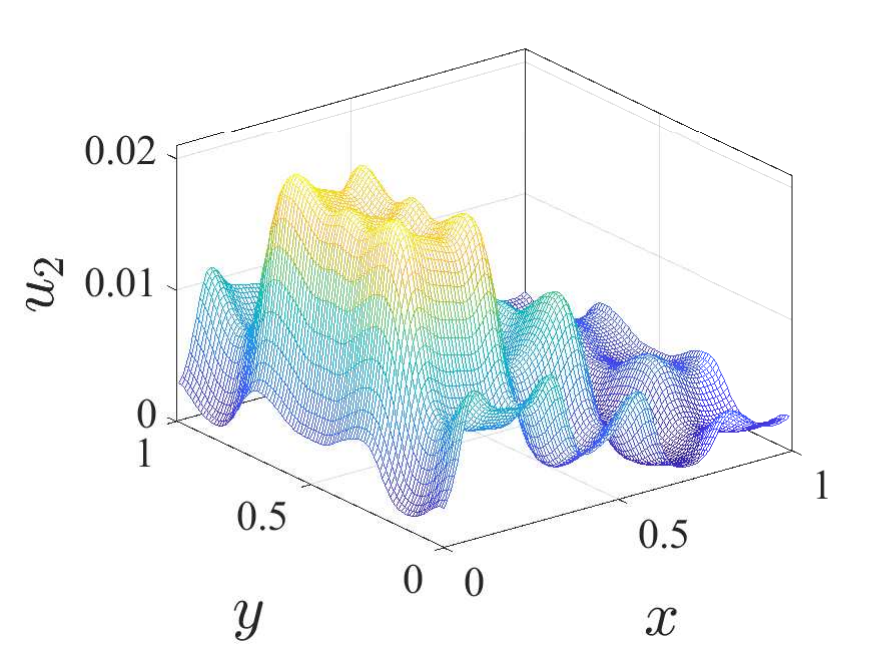}
\includegraphics[width=0.32\linewidth]{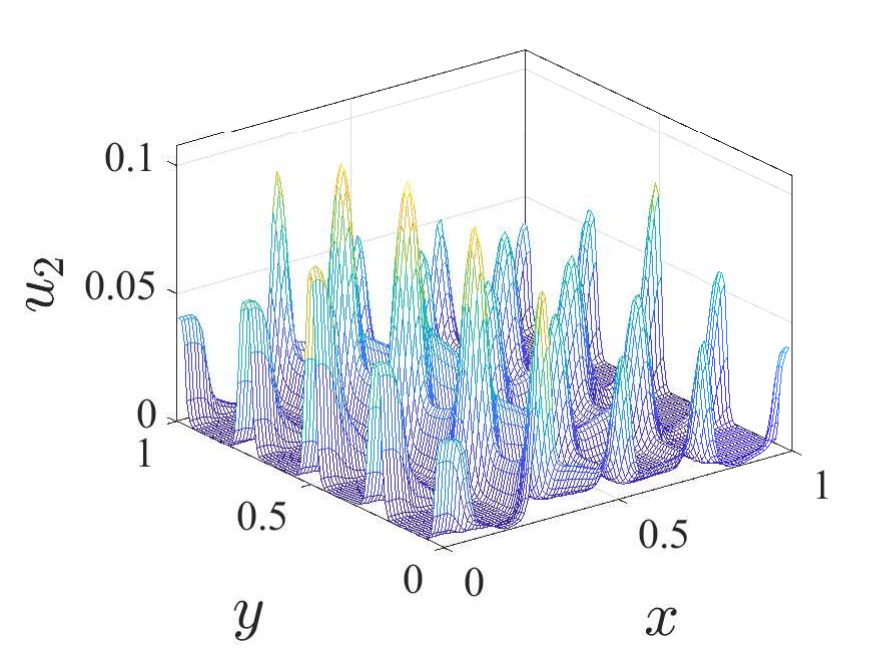}
\includegraphics[width=0.32\linewidth]{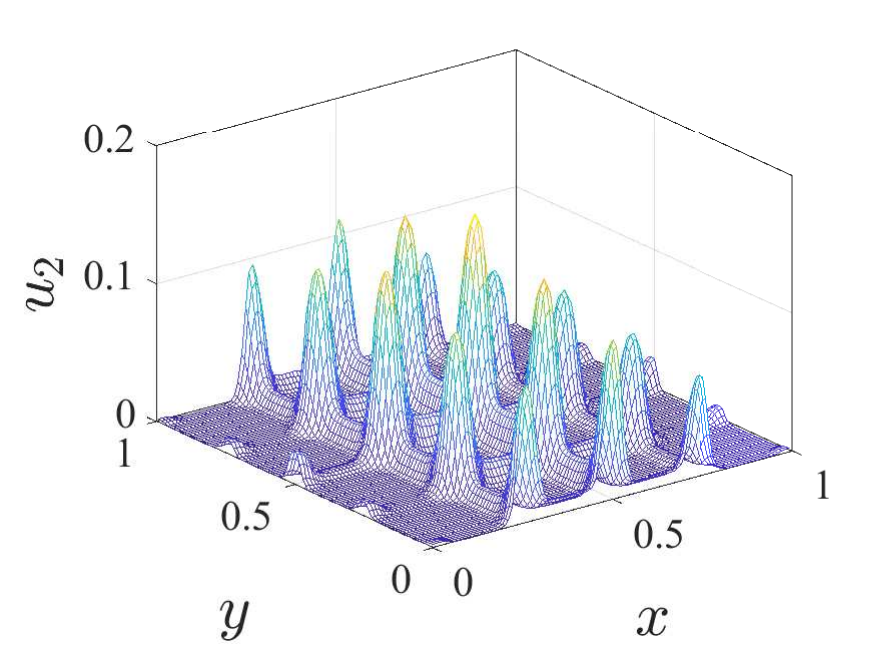}
\caption{Strong repulsive interactions: Solution profiles $u_1$ (top) and $u_2$ (bottom) at times $t = 4$ (left), $t=10$ (middle), and $t=34$ (right).}
\label{fig.sdens2d}
\end{figure} 

\begin{figure}[ht]
\includegraphics[width=0.4\linewidth]{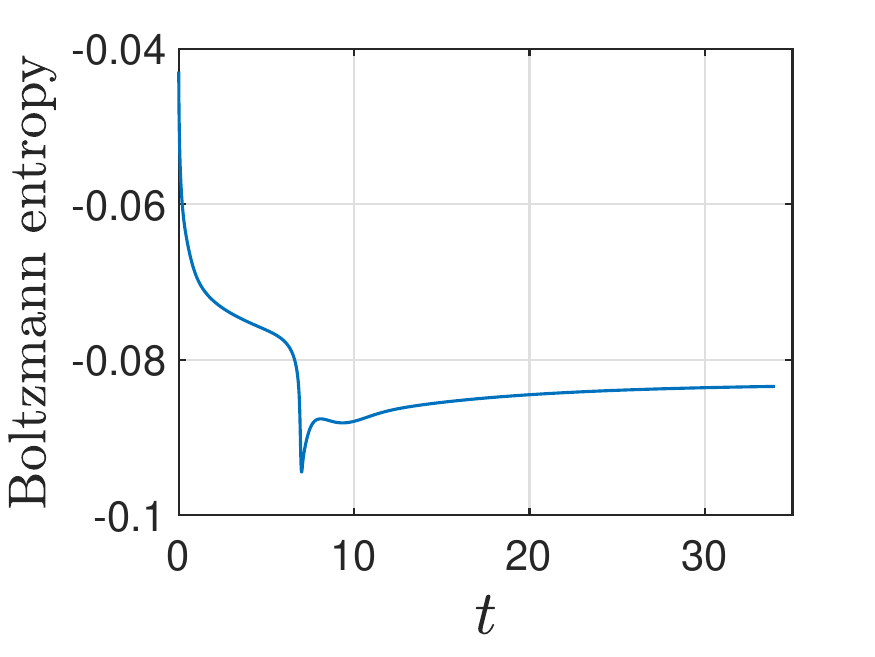}
\includegraphics[width=0.4\linewidth]{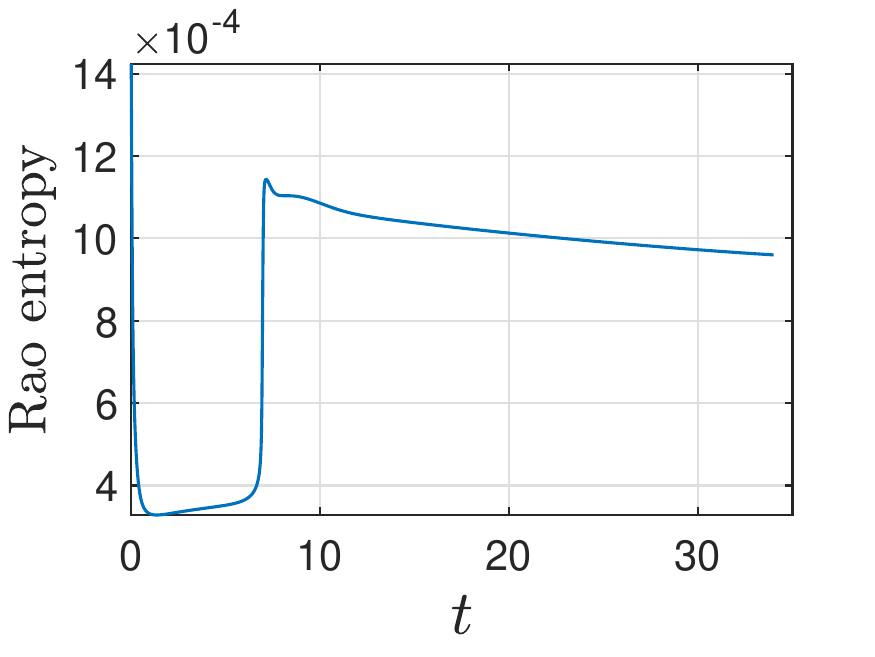}
\caption{Strong repulsive interactions: Evolution of the Boltzmann entropy (left) and Rao entropy (right).}
\label{fig.sent2d}
\end{figure}


\end{document}